\numberwithin{equation}{subsection}
\let\it\relax
\newcommand{\it}[1]{\textit{#1}}
\newcommand{\op}{\mathrm{op}}
\newcommand{\id}{\mathrm{id}}
\newcommand{\dgcat}{\mathrm{dgcat}}
\newcommand{\dg}{\mathrm{dg}}
\newcommand{\sset}{\mathrm{sSet}}
\newcommand{\SCG}{\mathrm{SCG}}
\newcommand{\htpytt}{\mathcal{H}_{\mathrm{3t}}}
\newcommand{\MA}{\mathcal{A}}
\newcommand{\MC}{\mathcal{C}}
\newcommand{\MS}{\mathcal{S}}
\newcommand{\MT}{\mathcal{T}}
\newcommand{\MD}{\mathcal{D}}
\newcommand{\RC}{\mathrm{C}}
\newcommand{\RD}{\mathcal{D}}
\newcommand{\tauleq}[1]{\tau_{\leq #1}}
\newcommand{\taugeq}[1]{\tau_{\geq #1}}
\DeclareMathOperator{\rep}{\mathrm{rep}}
\DeclareMathOperator{\Mod}{\mathrm{Mod}}
\DeclareMathOperator{\Ker}{\mathrm{Ker}}
\DeclareMathOperator{\Cok}{\mathrm{Cok}}
\DeclareMathOperator{\Cone}{\mathrm{Cone}}
\DeclareMathOperator{\Cocone}{\mathrm{Cocone}}
\theoremstyle{definition}
\newtheorem{ttheorem}{Theorem}
\newtheorem{theorem}{Theorem}[section]
\newtheorem{proposition}[theorem]{Proposition}
\newtheorem*{proposition*}{Proposition}
\newtheorem{lemma}[theorem]{Lemma}
\newtheorem{cor}[theorem]{Corollary}
\newtheorem*{cor*}{Corollary}
\newtheorem{definition}[theorem]{Definition}
\newtheorem*{definition*}{Definition}
\newtheorem{remark}[theorem]{Remark}
\newtheorem{example}[theorem]{Example}
\begin{document}

\title[Abelian $n$-truncated DG-categories]{Higher-dimensional generalization of abelian categories\\ via DG-categories}

\author{Nao Mochizuki}

\address{Graduate School of Mathematics, Nagoya University, Furocho, Chikusaku, Nagoya 464-8602, Japan}
\email{mochizuki.nao.n8@s.mail.nagoya-u.ac.jp}

\begin{abstract}
In this paper, we introduce abelian $n$-truncated DG-categories as an $n$-dimensional analogue of abelian categories in the setting of DG-categories. When $n=1$, this recovers ordinary abelian categories, and when $n=\infty$, it corresponds to stable DG-categories \cite{chen2024exact1}. This notion serves as a DG-categorical 
analogue of abelian $(n,1)$-categories introduced in the context of $(n,1)$-categories by \cite{stefanich2023derived}. We show that the homotopy categories of abelian $n$-truncated DG-categories acquire the structure of extriangulated and pretriangulated categories. Furthermore, we develop a general theory of abelian $n$-truncated DG-categories, including the analogues of the existence epi-mono factorizations of morphisms, as in classical abelian categories.
\end{abstract}

\maketitle

\tableofcontents

\setcounter{section}{-1}

\section{Introduction}

Abstract settings for homological algebra, such as triangulated categories, abelian categories, and exact categories, share the notion of a "short exact sequence" or its analogues. 
Nakaoka and Palu \cite{nakaoka2019extriangulated} generalized these categories by introducing the notion of extriangulated categories, thereby providing a unified framework to understand their interrelations and enabling a common language for discussion. 
Furthermore, Xiaofa Chen \cite{chen2023exact} introduced the concept of exact DG-categories—DG-categories equipped with additional structures—aimed at constructing DG-enhancements of extriangulated categories. 
In particular, it was shown that the homotopy category of an exact DG-category naturally admits the structure of an extriangulated category, generalizing the result that a pretriangulated DG-category yields a DG-enhancement of a triangulated category (cf.\ \cite{bondal1991enhanced}).

When abelian categories and triangulated categories are viewed as extriangulated categories, they exhibit notable properties that are not generally present in exact categories. For instance, every morphism admits kernel-like and cokernel-like notions and can be decomposed into deflation and inflation. This naturally leads to the question: how can extriangulated categories, which generalize shared properties of abelian and triangulated categories, be axiomatized?

Regarding the second motivation, within the context of the representation theory of algebras, \cite{adachi2014tilting} developed a framework for classifying torsion classes of functorially finite modules using support $\tau$-tilting modules and 2-term silting complexes. As a natural generalization of 2-term silting theory, the notion of $(n+1)$-term silting objects has been studied (cf.\ \cite{gupta2024d}), and results analogous to those of \cite{adachi2014tilting} have been established in \cite{zhou2024tilting}. However, \cite{zhou2024tilting} demonstrates that the objects corresponding to $(n+1)$-term silting complexes reside in $n$-extended module categories (cf.\ \cite[Theorem 4.7]{zhou2024tilting}). Accordingly, it becomes essential to develop a foundational theory for $n$-extended modules and, more generally, for the $n$-extended hearts associated with $t$-structures.

In \cite{gupta2024d} and \cite{zhou2024tilting}, $n$-extended module categories are treated as extension-closed subcategories of triangulated categories, necessitating frequent reference to the surrounding triangulated structure. By providing an axiomatic framework for $n$-extended module categories, a more concise and self-contained understanding of the aforementioned theories can be achieved.

As a third motivation, \cite{nakaoka2008cohomology} and \cite{dupont2008abelian} introduced a $(2,1)$-categorical analogue of abelian categories. This concept arose as an axiomatization of the $2$-category formed by $2$-dimensional analogues of abelian groups, known as symmetric categorical groups \cite{vitale2002picard}. Furthermore, Stefanich introduced abelian $(n,1)$-categories \cite{stefanich2023derived} in the context of $(\infty,1)$-categories. In this paper, we develop analogues of these concepts in the setting of DG-categories.

\subsection*{Structure and main results of this paper}

In \S\ref{sec: preliminalies}, we discuss the theory of exact DG-categories, which plays a crucial role in this study. We introduce lemmas concerning homotopy short exact sequences in general additive DG-categories and provide the definition of exact DG-categories. This section is primarily based on the results of \cite{chen2023exact}.

In \S\ref{sec: preabel}, before addressing abelian $n$-truncated DG-categories, we consider the homotopy category of preabelian DG-categories. A preabelian DG-category is an additive DG-category that admits homotopy kernels and homotopy cokernels. When an additive category is regarded as a DG-category, it naturally corresponds to a preabelian category in the usual sense. The main theorem of this section is as follows:

\begin{proposition*}[Proposition \ref{prop: left triang}]
Let $\MA$ be a finitely complete DG-category. Then, $(H^0(\MA), \Omega, \Delta)$ forms a left triangulated category (cf. Definition \ref{def: left triangulated cat}).
\end{proposition*}

\begin{ttheorem}[Theorem \ref{thm: pretriang}]
Let $\MA$ be a preabelian DG-category. Then, $H^0(\MA)$ naturally admits a pretriangulated structure $(\Omega, \Sigma, \Delta, \nabla)$.
\end{ttheorem}

\noindent
This pretriangulated structure is not defined by weakening the octahedral axiom in the sense of triangulated categories but is instead the framework introduced by \cite{beligiannis2007homological}. Pretriangulated categories, introduced by Beligiannis and Reiten \cite{beligiannis2007homological}, generalize both preabelian and triangulated categories, providing a unified framework to handle the theories of such torsion pairs. Definitions and details of pretriangulated categories are presented in Appendix \ref{app: B}.

In \S\ref{sec: abel}, we introduce the concept of $n$-monomorphisms (cf. Definition \ref{def: n-mono}) as an analog of monomorphisms in abelian categories, and using this concept, we define abelian $n$-truncated DG-categories.

\begin{definition*}[Definition \ref{def: abel def}]
Let $\MA$ be an additive $n$-truncated DG-category. Then, $\MA$ is called an \it{abelian $n$-truncated DG-category} if it satisfies the followings:
\begin{itemize}
\item [(i)] $\MA$ is a preabelian $n$-truncated DG-category.
\item [(ii)] For any right exact $3$-term homotopy  complex:
$$
\begin{tikzcd}
 X \ar[r, "f", swap] \ar[rr, "0", bend left=45, "{}" name=zero]
 & |[alias=Y]| Y \ar[r, "g", swap] & Z
\ar[Rightarrow, from=Y, to=zero, "h", shorten >=1mm]
\end{tikzcd}
$$
if $f$ is an $n$-monomorphism, then this $3$-term homotopy complex is a homotopy short exact sequence.
\item [(iii)] The dual of (ii).
\end{itemize}
\end{definition*}

We show that abelian $n$-truncated DG-categories naturally become exact DG-categories, and the natural extriangulated structure in their homotopy category is compatible with the pretriangulated structure discussed in \S\ref{sec: preabel}.

\begin{proposition*}[Proposition \ref{prop: the homotopy category of abelian}]
Let $\MA$ be an abelian $n$-truncated DG-category. By regarding every homotopy short exact sequence as a conflation, $\MA$ becomes an exact DG-category. That is, inflations in $\MA$ are equivalent to $n$-monomorphisms, and deflations are equivalent to $n$-epimorphisms.
\end{proposition*}

\noindent
The terms such as conflation are defined in Definition \ref{definition: exact dg}.

\begin{cor*}[Corollary \ref{cor: long exact induced abelian}]
Let $\MA$ be an abelian $n$-truncated DG-category. Let $H^0(\MA)$ be denoted by $\MC$, and $(\MC, \mathbb{E}, \mathfrak{s})$ the natural extriangulated structure derived from the exact DG-category. Let 
\begin{tikzcd}[column sep = 15]
{A}\ar[r,"f"] & {B}\ar[r,"g"] & {C}\ar[r,dashed,"\delta"] & {}
\end{tikzcd}
be a conflation in this extriangulated category. Then, the following long exact sequence is induced:
$$
\begin{tikzcd}[row sep=10,column sep=40]
0 \ar[r] & \MC(-, \Omega^{n-1} A) \ar[r, "\Omega^{n-1} f \circ -"] & \MC(-, \Omega^{n-1} B) \ar[r, "\Omega^{n-1} g \circ -"] & \MC(-, \Omega^{n-1} C) \ar[r] & \cdots \\
{} \ar[r] & \MC(-, \Omega A) \ar[r, "\Omega f \circ -"] & \MC(-, \Omega B) \ar[r, "\Omega g \circ -"] & \MC(-, \Omega C) \ar[r] & {} \\
{} \ar[r] & \MC(-, A) \ar[r, "f \circ -"] & \MC(-, B) \ar[r, "g \circ -"] & \MC(-, C) \ar[r, "\delta \cdot -"] & {} \\
{} \ar[r] & \mathbb{E}(-, A) \ar[r] & \mathbb{E}(-, B) \ar[r] & \mathbb{E}(-, C) \ar[r] & \cdots \\
\cdots \ar[r] & \mathbb{E}^m(-, A) \ar[r] & \mathbb{E}^m(-, B) \ar[r] & \mathbb{E}^m(-, C) \ar[r] & \cdots \\
\end{tikzcd}
$$
\end{cor*}

\noindent
We also describe the factorization of morphisms, which is a prominent property of abelian $n$-truncated DG-categories. In abelian categories, every morphism can be decomposed into an epimorphism and a monomorphism. Similarly, in abelian $n$-truncated DG-categories, such factorization can be described as follows:

\begin{ttheorem}[Theorem \ref{thm: the factorization theorem}]
Let $\MA$ be an abelian $n$-truncated DG-category, and let $f$ be a closed morphism of degree $0$ in $\MA$. Then, the factorization of $f$ into a 1-epimorphism and an $n$-monomorphism exists uniquely up to homotopy equivalence.
\end{ttheorem}

\noindent
Dually, a factorization into an $n$-epimorphism and a 1-monomorphism is also obtained. In abelian categories, these two factorizations coincide, which can also be understood in this context. Much of the proof of this theorem is inspired by \cite{nakaoka2008cohomology}.

 We demonstrate that the DG enhancement of the $n$-extended heart of an algebraic triangulated category is an abelian $n$-truncated DG-category:

\begin{ttheorem}[Theorem \ref{thm: induced by t-structure}]
Let $\MD_{\dg}$ be a non-positive stable DG-category, and let $(\MD_{\dg}^{\leq 0}, \MD_{\dg}^{\geq 0})$ be its $t$-structure. Then, the $n$-extended heart $\MD_{\dg}^{[-n+1,0]}$ forms an abelian $n$-truncated DG-category.
\end{ttheorem}

Without using the language of DG-categories, theorems so far are stated as follows:

\begin{ttheorem}[Theorem \ref{thm: h0}]
Let $\MD$ be a algebraic triangulated categories and $(\MD^{\leq0},\MD^{\geq 0})$ be a $t$-structure. Then 
the following holds. 
\begin{itemize}
\item[(i)] $\MD^{[-n+1,0]}$ admits a pretriangulated structure $(\Omega,\Sigma,\Delta,\nabla)$ where:
\begin{itemize}
\item $\Omega:=\tauleq{0}\circ[-1]$ and $\Sigma:=\taugeq{-n+1}\circ [1]$
\item $\Delta$ is the class of triangles isomorphic to the following triangles:
$$
\begin{tikzcd}
\tauleq{0}(Y[-1])\ar[r]&\tauleq{0}(\Cone f[-1]) \ar[r, swap]  & |[alias=Y]| X \ar[r, "f"', swap] & Y 
\end{tikzcd}
$$
where $f\colon X\rightarrow Y$ is a morphism of $\MD^{[-n+1,0]}$.
\item $\nabla$ is the class of triangles isomorphic to the following triangles:
$$
\begin{tikzcd}
X\ar[r,"f"]&Y\ar[r]&\taugeq{-n+1}\Cone f\ar[r]&\taugeq{-n+1}(X[1])
\end{tikzcd}$$
\end{itemize}
\item [(ii)] $\MD^{[-n+1,0]}$ admits a extriangulated structure suth that:
\begin{itemize}
\item Conflations are admissible triangles \begin{tikzcd}[column sep = 15]
{A}\ar[r,"f"] & {B}\ar[r,"g"] & {C}\ar[r,dashed,"\delta"]&{}
\end{tikzcd} in $\MD$ which every terms are in $\MD^{[-n+1,0]}$
\item For a morphism $f\colon X\rightarrow Y$ in $\MD^{[-n+1,0]}$, The followings are equivalent:
\begin{itemize}
\item [(1)] $f$ is a inflation.
\item [(2)] $H_{\MD}^{-n+1}(f)$ is monomorphic in the heart of fixed $t$-structure.
\item [(3)] $\Omega^{n-1}(f)$ is monomorphic in $\MD^{[-n+1,0]}$.
\end{itemize}
\item For a morphism $f\colon X\rightarrow Y$ in $\MD^{[-n+1,0]}$, The followings are equivalent:
\begin{itemize}
\item [(1)] $f$ is a deflation.
\item [(2)] $H_{\MD}^{0}(f)$ is epimorphic in the heart of fixed $t$-structure.
\item [(3)] $\Sigma^{n-1}(f)$ is epimorphic in $\MD^{[-n+1,0]}$.
\end{itemize}
\end{itemize}
\item [(iii)] For a morphism $f\colon X\rightarrow Y$ in $\MD^{[-n+1,0]}$, there exists a factorization $f=m_n\circ e_1$ suth that:
\begin{itemize}
\item $H^{-n+1}_\MD(m_n)$ is a monomorphism in the heart of fixed $t$-structure, or equivalently, $\Omega^{n-1}(m_n)$ is a monomorphism in $\MD^{[-n+1,0]}$.
\item $H^{-n+1}_\MD (e_1)$ is a epimorphism and $H^{\geq -n+1}_\MD(e_1)$ is isomorphism in the heart of fixed $t$-structure, or equivalently, $e_1$ is a epimoprhism and $\Sigma^{\geq 1}(e_1)$ is a isomorphism in $\MD^{[-n+1,0]}$.
\end{itemize}
 and this factorization is unique up to isomorphisms.
\item [(iii)'] The dual statements of (iii)
\end{itemize}
\end{ttheorem}

Finally, in \S \ref{sec :5}, we discuss the connection between abelian $(2,1)$-categories and relatively exact $2$-categories introduced in \cite{nakaoka2008cohomology}. We establish the following result:

\begin{ttheorem}[Theorem \ref{thm: abelian 2-truncated}]
Let $\MA$ be a $2$-truncated DG-category. Then $\MA$ is an abelian $2$-truncated DG-category if and only if $\MA_\SCG$ is a relatively exact $2$-category without $(\text{a3-1})$.
\end{ttheorem}

Moreover, the following assertion also holds in the setting of $(n,1)$-categories.

\begin{proposition*}[Proposition \ref{thm: stef}]
Let $\MA$ be a non-positive DG-categories. Then $\MA$ is an abelian $n$-truncated DG-category if and only if $N_{\dg}(\MA)$ is an abelian $(n,1)$-category in the sense of \cite{stefanich2023derived}.
\end{proposition*}


\section{Preliminaries}\label{sec: preliminalies}

In this section, we introduce homotopy diagrams and exact DG categories that will be used in subsequent discussions. This chapter primarily follows \cite{chen2023exact}. 

From now on, let $\mathcal{A}$ be a DG-category, which we always assume to be non-positive. 
For an object $A \in \mathcal{A}$, we denote the representable DG-functor $\mathcal{A}(-,A)$ by $A^\land$ and $\mathcal{A}(A,-)$ by $A^\lor$.

To enhance readability, we adopt the following notation.

\begin{definition}
Let $f_{ij} \colon A_i \to A_j \ (0 \leq i < j \leq 2)$ be closed morphisms in $\MA$ such that $|f_{02}| = |f_{01}| + |f_{12}|$. If there exists a morphism $h \colon A_0 \to A_2$ in $\MA$ of degree $|f_{02}| - 1$ satisfying $d(h) = f_{02} - f_{12} \circ f_{01}$, we represent it as follows:
\[
\begin{tikzcd}
A_0 \ar[rr, "f_{02}" name=f_02] \ar[rd, "f_{01}", swap] & & A_2 \\
& |[alias=a_1]|A_1 \ar[ru, "f_{12}", swap] & 
\ar[Rightarrow, from=a_1, to=f_02, "h", shorten >=2mm, shorten <=2mm]
\end{tikzcd}
\]
\end{definition}

\begin{definition}[{\cite[Definition 3.39]{chen2023exact}}]
A diagram of the following form, where $|f| = |g| = 0$ and $|h| = -1$, is called a \it{$3$-term homotopy complex} and is denoted simply by $(f, g, h)$:
\[
\begin{tikzcd}
X \ar[r, "f", swap] \ar[rr, "0", bend left=45, "{}" name=zero] & |[alias=Y]| Y \ar[r, "g", swap] & Z
\ar[Rightarrow, from=Y, to=zero, "h", shorten >=1mm]
\end{tikzcd}
\]
\end{definition}

\begin{definition}[{\cite[Definition 3.39]{chen2023exact}}]
Let $(f, g, h)$ and $(f', g', h')$ be $3$-term homotopy complexes. Consider a sextuple $(x, y, z, s, s', t)$ of morphisms.  ($t \colon X \to Z'$ is not explicitly shown in the diagram.)  
\[
\begin{tikzcd}
X \ar[r, "f"] \ar[d, "x"] \ar[rr, bend left=45, "0" name=0] & |[alias=B_0]| Y \ar[r, "g"] \ar[d, "y"] & |[alias=C_0]| Z \ar[d, "z"] \\
|[alias=A]| X' \ar[r, "f'", swap] \ar[rr, bend right=45, "0" name=0', swap] & |[alias=B_1]| Y' \ar[r, "g'", swap] & Z'
\ar[Rightarrow, from=C_0, to=B_1, "s'"]
\ar[Rightarrow, from=B_0, to=A, "s"]
\ar[Rightarrow, from=B_0, to=0, "h", shorten >=1mm]
\ar[Rightarrow, from=B_1, to=0', "h'", shorten >=1mm]
\end{tikzcd}
\]
The sextuple $(x, y, z, s, s', t)$ is called a morphism from $(f, g, h)$ to $(f', g', h')$ if the following conditions are satisfied:
\begin{itemize}
    \item $|x| = |y| = |z| = 0$, $|s| = |s'| = -1$, $|t| = -2$.
    \item $d(x) = 0$, $d(y) = 0$, $d(z) = 0$, $d(s) = f' \circ x - y \circ f$, $d(s') = g' \circ y - z \circ g$\\
    $d(t) = z \circ h - s' \circ f - g' \circ s - h' \circ x$.
\end{itemize}
\end{definition}

\begin{definition}[{\cite[Definition 3.39]{chen2023exact}}]
The category $\htpytt(\MA)$ is defined as follows:
\begin{itemize}
    \item The objects are $3$-term homotopy complexes in $\MA$.
    \item The morphisms are given by the morphisms between $3$-term homotopy complexes, as defined above, modulo an equivalence relation $\sim$. For details on the equivalence relation $\sim$, see \cite[Definition 3.39]{chen2023exact}.
\end{itemize}
\end{definition}

\begin{definition}[{\cite[Definition 3.49]{chen2023exact}}]
Let $f \colon A \to A'$ be a closed morphism of degree $0$ in $\MA$. Consider the following $3$-term homotopy complex:
\begin{equation}\label{kernel}
\begin{tikzcd}
\Ker f \ar[r, "k_f", swap] \ar[rr, "0", bend left=45, "{}" name=zero] & |[alias=Y]| A \ar[r, "f", swap] & A' \quad
\ar[Rightarrow, from=Y, to=zero, "h_f", shorten >=1mm]
\end{tikzcd}
\end{equation}
This $3$-term homotopy complex induces the morphism in $\RD_{\dg}(\MA)$:
\[
\begin{bmatrix} k_f^\land \\ h_f^\land \end{bmatrix} \colon (\Ker f)^\land \to \Cone(f^\land)[-1].
\]
If this morphism induces an isomorphism on the $i$-th cohomology for all $i \leq 0$, the $3$-term homotopy complex is called the \it{homotopy kernel} of $f$. Alternatively, $(k_f, f, h_f)$ is said to be \it{left exact}.
\end{definition}

\begin{remark}
The homotopy kernel of $f$, if it exists, is unique up to homotopy equivalence.
\end{remark}

The homotopy cokernel is defined as the homotopy kernel in the opposite DG-category $\MA^\op$.

To describe the universality of the homotopy kernel, we introduce the concept of a filler. This notion serves as a substitute for the "commutativity" of diagrams. Let $x \colon X \to A$ be a closed morphism of degree $i$. Consider the following diagram:
\begin{equation}\label{dia: ker}
\begin{tikzcd}
K \ar[r, "k"] \ar[rr, "0" name=zero, bend left=45] & |[alias=Y]| A \ar[r, "f"] & A' \\
X \ar[ru, "x", swap] \ar[rru, bend right=30, "0" name=zero', swap] & &
\ar[Rightarrow, from=Y, to=zero, "h_f", shorten >=1mm]
\ar[Rightarrow, from=Y, to=zero', "h_x", swap, shorten <=1mm, shorten >=2mm]
\end{tikzcd}
\end{equation}

\begin{definition}
Given the diagram \eqref{dia: ker} and a closed morphism $x' \colon X \to K$ of degree $i$, a pair of morphisms $\overline{x'} \colon X \to A$ and $\widetilde{x'} \colon X \to A'$ is called a \emph{filler} for $x'$ in this diagram if the following conditions are satisfied:
\begin{itemize}
    \item [(i)] $|\overline{x'}| = i-1$, $|\widetilde{x'}| = i-2$,
    \item [(ii)] $d(\overline{x'}) = x - k \circ x'$,
    \item [(iii)] $d(\widetilde{x'}) = h_f \circ x' - f \circ \overline{x'} - h_x$.
\end{itemize}
 Alternatively, we say that $x'$ \emph{admits a filler} $(\overline{x'}, \widetilde{x'})$.
\end{definition}

The following proposition states the universality of the homotopy kernel.

\begin{proposition}[{\cite[Lemma 3.50]{chen2023exact}}]
\label{prop: univ of kernels}
Consider the following $3$-term homotopy complex:
\begin{equation}\label{dia: left ex}
\begin{tikzcd}
K \ar[r, "k"] \ar[rr, "0" name=zero, bend left=45] & |[alias=Y]| A \ar[r, "f"] & A'
\ar[Rightarrow, from=Y, to=zero, "h_f", shorten >=1mm]
\end{tikzcd}
\end{equation}
This is a homotopy kernel of $f$ if and only if the following conditions are satisfied:
\begin{itemize}
    \item [(i)] Let $x \colon X \to A$ be a degree $i$ closed morphism, and let $h_x \colon X \to A'$ be a degree $i-1$ morphism satisfying $d(h_x) = -f \circ x$. Then, there exists a closed morphism $x' \colon X \to K$ of degree $i$ that admits a filler.
    \item [(ii)] In (i), if both $x' \colon X \to K$ and $x'' \colon X \to K$ admits a fillers, there exists a morphism $h \colon X \to K$ of degree $i-1$ such that $d(h) = x' - x''$.
\end{itemize}
\end{proposition}

\begin{remark}
In Proposition \ref{prop: univ of kernels}, the pair $(x, h_x)$ determines an element of degree $i$ in $(\Cone(f^\land)[-1])(X)$, and the corresponding element in $K^\land(X)$ is $x'$.
\end{remark}

The opposite category of a DG-category arises signs in compositions. Thus, when defining the dual concepts, attention must be paid to these signs.

\begin{definition}
Consider the following diagram and a closed morphism $x' \colon C \to X$ of degree $i$. If a pair of morphisms $\overline{x'} \colon A' \to X$ and $\widetilde{x'} \colon A \to X$ satisfies the following equations, the pair is called a \it{cofiller} for $x'$ in the diagram, or we say $x'$ \it{admits a cofiller} $(\overline{x'}, \widetilde{x'})$:
\begin{itemize}
    \item [(i)] $|\overline{x'}| = i-1$, $|\widetilde{x'}| = i-2$.
    \item [(ii)] $d(\overline{x'}) = x - x' \circ c$.
    \item [(iii)] $d(\widetilde{x'}) = (-1)^i x' \circ h^f - \overline{x'} \circ f - h^x$.
\end{itemize}
\[
\begin{tikzcd}
A \ar[r, "f"] \ar[rr, bend left=45, "0" name=zero] \ar[rrd, bend right=30, "0" name=zero', swap] & |[alias=A']| A' \ar[r, "c"] \ar[rd, "x"] & C \\
 & & X
 \ar[Rightarrow, from=A', to=zero, "h^f", shorten >=1mm]
 \ar[Rightarrow, from=A', to=zero', "h^x", swap, shorten <=1mm, shorten >=1mm]
\end{tikzcd}
\]
\end{definition}

\begin{proposition}
Consider the following $3$-term homotopy complex:
\[
\begin{tikzcd}
A \ar[r, "f"] \ar[rr, "0" name=zero, bend left=45] & |[alias=Y]| A' \ar[r, "c"] & C
\ar[Rightarrow, from=Y, to=zero, "h^f", shorten >=1mm]
\end{tikzcd}
\]
This is a homotopy cokernel of $f$ if and only if the following conditions are satisfied:
\begin{itemize}
    \item [(i)] Let $x \colon A' \to X$ be a closed morphism of degree $i$, and let $h^x \colon A \to X$ be a morphism of degree $i-1$ satisfying $d(h^x) = -x \circ f$. Then, there exists a closed morphism $x' \colon C \to X$ of degree $i$ that admits a cofiller.
    \item [(ii)] In (i), if both $x' \colon C \to X$ and $x'' \colon C \to X$ admits a cofiller, there exists a morphism $h \colon C \to X$ of degree $i-1$ such that $d(h) = x' - x''$.
\end{itemize}
\end{proposition}

\begin{definition}
A $3$-term homotopy complex is called a \it{homotopy short exact sequence}, or \it{homotopy short exact}, if it is both left exact and right exact:
\[
\begin{tikzcd}
X \ar[r, "f", swap] \ar[rr, "0", bend left=45, "{}" name=zero] & |[alias=Y]| Y \ar[r, "g", swap] & Z
\ar[Rightarrow, from=Y, to=zero, "h", shorten >=1mm]
\end{tikzcd}
\]
\end{definition}

From now on, we will explore basic properties of homotopy kernels. Note that the results for homotopy cokernels hold dually, up to signs.

\begin{lemma}\label{lem: induced morphism is null-homotopic}
In Proposition \ref{prop: univ of kernels} (i), the induced morphism $x'$ is null-homotopic if and only if there exists a pair of morphisms $(y, h_y)$ such that $x = d(y)$ and $h_x = -d(h_y) - f \circ y$.
\end{lemma}
\begin{proof}
By assumption, the following morphism is an isomorphism:
\[
H^i\left(\begin{bmatrix}k^\land\\h_f^\land\end{bmatrix}\right) \colon H^i(K^\land) \to H^i(\Cone f^\land[-1]).
\]
Therefore, $(x, h_x) \in Z^i(\Cone f^\land[-1])$ is zero in the $i$-th cohomology group if and only if the corresponding element $x' \in Z^i(K^\land)$ is zero in the $i$-th cohomology group.
\end{proof}

\begin{proposition}\label{prop: long exact induced by left exact}
Let the diagram \eqref{kernel} be a left exact 3-term complex. Then, the following long exact sequence in $\Mod H^0(\MA)$ is obtained:
\[
\begin{tikzcd}[row sep=0.3cm]
\cdots \ar[r] & H^i((\Ker f)^\land) \ar[r] & H^i(A^\land) \ar[r] & H^i(A'^\land) \\
\cdots \ar[r] & H^{-1}((\Ker f)^\land) \ar[r] & H^{-1}(A^\land) \ar[r] & H^{-1}(A'^\land) \\
\ar[r] & H^{0}((\Ker f)^\land) \ar[r] & H^{0}(A^\land) \ar[r] & H^{0}(A'^\land).
\end{tikzcd}
\]
\end{proposition}
\begin{proof}
Since $\RD(\MA)$ is a triangulated category, the following long exact sequence is obtained in general:
\[
\begin{tikzcd}[row sep=0.3cm]
\cdots \ar[r] & H^i(\Cone(f^\land[-1])) \ar[r] & H^i(A^\land) \ar[r] & H^i(A'^\land) \\
\cdots \ar[r] & H^{-1}(\Cone(f^\land[-1])) \ar[r] & H^{-1}(A^\land) \ar[r] & H^{-1}(A'^\land) \\
\ar[r] & H^{0}(\Cone(f^\land[-1])) \ar[r] & H^{0}(A^\land) \ar[r] & H^{0}(A'^\land).
\end{tikzcd}
\]
Thus, it suffices to show the commutativity of the following diagram, which is evident from the definition of the homotopy kernel:
\[
\begin{tikzcd}
& H^i(\Ker f^\land) \ar[ld, "H^i\left(\begin{bmatrix}k_f^\land\\h_f^\land\end{bmatrix}\right)", swap] \ar[rd, "H^i(k_f^\land)"] & \\
H^i(\Cone f^\land[-1]) \ar[rr, "H^i\left(\begin{bmatrix}
    \id_{A^\land}  0
\end{bmatrix}\right)", swap] & & H^i(A^\land).
\end{tikzcd}
\]
\end{proof}

\begin{proposition}\label{prop: torikae}
Consider the following diagram where $(k, f, h_f)$ is left exact:
\[
\begin{tikzcd}
K \ar[r, "k"] \ar[rr, "0" name=zero, bend left=45] & |[alias=Y]| A \ar[r, "f"] & A' \\
X \ar[ru, "0", swap] \ar[rru, bend right=30, "0" name=zero', swap] & &
\ar[Rightarrow, from=Y, to=zero, "h_f", shorten >=1mm]
\ar[Rightarrow, from=Y, to=zero', "0", swap, shorten <=1mm, shorten >=2mm]
\end{tikzcd}
\]
Let $x' \colon X \to K$ be a closed morphism of degree $i$ with a filler $(\overline{x'}, \widetilde{x'})$. Then, there exist a triple of morphisms $r \colon X \to K$, $\overline{r} \colon X \to A$, and $\widetilde{r} \colon X \to A'$ satisfying the following:
\begin{itemize}
    \item [(i)] $|r| = i-1$, $|\overline{r}| = i-2$, $|\widetilde{r}| = i-3$.
    \item [(ii)] $d(r) = x'$, $d(\overline{r}) = \overline{x'} + k \circ r$, $d(\widetilde{r}) = h_f \circ r - f \circ \overline{r} - \widetilde{x'}$.
\end{itemize}
\end{proposition}
\begin{proof}
By assumption, the two closed morphisms $0 \colon X \to K$ and $x' \colon X \to K$ of degree $i$ each have a filler. Thus, by the universality of the homotopy kernel, there exists a morphism $r' \colon X \to K$ of degree $i-1$ such that $d(r') = x'$. Using this, the pair of morphisms $(\overline{x'} + k \circ r', \widetilde{x'} + h_f \circ r')$ satisfies the following:
\begin{itemize}
    \item $d(\overline{x'} + k \circ r') = -k \circ x' + k \circ x' = 0$.
    \item $d(\widetilde{x'} + h_f \circ r') = h_f \circ x' - f \circ \overline{x'} - f \circ k \circ r' - h_f \circ x' = -f \circ (\overline{x'} + k \circ r')$.
\end{itemize}
By applying the universality of the homotopy kernel again, there exists a closed morphism $r'' \colon X \to K$ degree $i-1$ with a filler $(\overline{r}, \widetilde{r})$. The triple $(r' - r'', \overline{r}, \widetilde{r})$ satisfies the desired properties.
\end{proof}

\begin{lemma}\label{lem:mor extend to homotopy kernel}
Consider the following left exact $3$-term homotopy complex and an arbitrary $3$-term homotopy complex:
\[
\begin{tikzcd}
K \ar[r, "k", swap] \ar[rr, "0", bend left=45, "{}" name=zero] & |[alias=Y]| A \ar[r, "f", swap] & A' & &
X \ar[r, "i", swap] \ar[rr, "0", bend left=45, "{}" name=zero'] & |[alias=Y']| Y \ar[r, "j", swap] & Z
\ar[Rightarrow, from=Y, to=zero, "h_f", shorten >=1mm]
\ar[Rightarrow, from=Y', to=zero', "h", shorten >=1mm]
\end{tikzcd}
\]
If the following diagram exists, there is a morphism $(g_0, g_1, g_2, s, s', t)$ between the $3$-term homotopy complexes:
\[
\begin{tikzcd}
Y \ar[r, "j"] \ar[d, "g_1", swap] & |[alias=Z]| Z \ar[d, "g_2"] \\
|[alias=A]| A \ar[r, "f", swap] & A'
\ar[Rightarrow, "s'", from=Z, to=A, shorten <=2mm, shorten >=2mm]
\end{tikzcd}
\]
\end{lemma}
\begin{proof}
By assumption, the pair $(g_1 \circ i, g_2 \circ h - s' \circ i)$ satisfies $d(g_1 \circ i) = 0$ and the following equation:
\[
d(g_2 \circ h - s' \circ i) = -g_2 \circ j \circ i - (f \circ g_1 \circ i - g_2 \circ j \circ i) = -f \circ g_1 \circ i.
\]
Hence, by the universality of the homotopy kernel, there exists a morphism $g_0 \colon X \to K$ of degree 0 and its filler $(s'', t')$. Then, $(g_0, g_1, g_2, -s'', s', -t')$ is the desired morphism.
\end{proof}

\begin{lemma}\label{lem: splits are homotopy short exact}
A homotopy $3$-term complex that is a split short exact sequence in $H^0(\MA)$ is a homotopy short exact in $\MA$.
\end{lemma}

\begin{proof}
Let $(f, g, h)$ be a homotopy $3$-term complex satisfying the assumptions. Since the assumption is self-dual, it suffices to prove left exactness.

Since $g$ is a retraction in $H^0(\MA)$, it is also a retraction in $\RD(\MA)$. Thus, in $\RD(\MA)$, the following triangle exists:
\[
\begin{tikzcd}
X^\land \ar[r, "f^\land"] & Y^\land \ar[r, "g^\land"] & Z^\land \ar[r, "0", dashed] & {}
\end{tikzcd}
\]
This implies that the canonical morphism $X^\land \to \Cone(g^\land)[-1]$ is an isomorphism in $\RD(\MA)$. In particular, $(f, g, h)$ is left exact.
\end{proof}

\begin{definition}[{\cite[Definition 3.14]{chen2023exact}}]
Consider the following diagram:
\[
\begin{tikzcd}
B \ar[r, "g"] \ar[d, "a", swap] & |[alias=Z]| B' \ar[d, "a'"] \\
|[alias=A]| A \ar[r, "f", swap] & A'
\ar[Rightarrow, "s", from=Z, to=A, shorten <=2mm, shorten >=2mm]
\end{tikzcd}
\]
If the induced morphism $H^i([a^\land, g^\land, s^\land]^t) \colon H^i(B'^\land) \to H^i(\Cone([f^\land, -a'^\land])[-1])$ is an isomorphism for all $i \leq 0$, then the above diagram is called the \it{homotopy fiber product} of $f$ and $a'$, or a \it{homotopy Cartesian} diagram. The \it{homotopy fiber coproduct}  (\it{homotopy co-Cartesian diagram}) is defined dually.
\end{definition}

\begin{lemma}\label{lem: characterization of finite completeness}
If $\MA$ admits homotopy kernels and $H^0(\MA)$ is additive, then $\MA$ always has homotopy fiber products.
\end{lemma}
\begin{proof}
Consider the following diagram and let $X \in \MA$ be an object such that $X \cong A \oplus B'$ in $H^0(\MA)$. Let this isomorphism $X \cong A \oplus B'$ be induced by $p_A \colon X \to A$ and $p_{B'} \colon X \to B'$.
\[
\begin{tikzcd}
 & B' \ar[d, "a'"] \\
A \ar[r, "f"] & A'
\end{tikzcd}
\]
Then, we obtain a closed morphism $f \circ p_A - a' \circ p_{B'} \colon X \to A'$ of degree $0$. Let the following diagram the homotopy kernel of this morphism:
\[
\begin{tikzcd}
B \ar[r, "k"] \ar[rr, "0" name=zero, bend left=45] & |[alias=Y]| X \ar[r] & A'
\ar[Rightarrow, from=Y, to=zero, "s", shorten >=1mm]
\end{tikzcd}
\]
Set $a := p_A \circ k$ and $g := p_{B'} \circ k$, resulting in the following diagram:
\[
\begin{tikzcd}
B \ar[r, "g"] \ar[d, "a", swap] & |[alias=Z]| B' \ar[d, "a'"] \\
|[alias=A]| A \ar[r, "f", swap] & A'
\ar[Rightarrow, "s", from=Z, to=A, shorten <=2mm, shorten >=2mm]
\end{tikzcd}
\]
This diagram is homotopy Cartesian. Indeed, for any $i \leq 0$,
\begin{align*}
H^i(B^\land) & \cong H^i(\Cone(f \circ p_A - a' \circ p_{B'}^\land)[-1]) \\
& \cong H^i(\Cone([f^\land, -a'^\land])[-1]),
\end{align*}
where the second isomorphism is induced by the isomorphism $[p_A^\land, p_{B'}^\land]^t \colon X^\land \to A^\land \oplus B'^\land$ in $\RD(\MA)$.
\end{proof}

\begin{proposition}[{\cite[Proposition 4.9]{chen2023exact}}]
\label{prop: pullback of seq}
Consider the following $3$-term homotopy complexes and a morphism between them $(\id_A, b, c, s, s', t)$:
\[
\begin{tikzcd}
A \ar[r] \ar[d, equal] \ar[rr, bend left=45, "0" name=0] & |[alias=B_0]| B_0 \ar[r] \ar[d, "b"] & |[alias=C_0]| C_0 \ar[d, "c"] \\
|[alias=A]| A \ar[r] \ar[rr, bend right=45, "0" name=0', swap] & |[alias=B_1]| B_1 \ar[r] & C_1
\ar[Rightarrow, from=C_0, to=B_1, "s'"]
\ar[Rightarrow, from=B_0, to=A, "s"]
\ar[Rightarrow, from=B_0, to=0, shorten >=1mm]
\ar[Rightarrow, from=B_1, to=0', shorten >=1mm]
\end{tikzcd}
\]
Then, the following statements hold:
\begin{itemize}
    \item [(i)] If the top row is right exact, then the bottom row is right exact if and only if the right square is homotopy co-Cartesian.
    \item [(ii)] If the right square is homotopy Cartesian, then the top row is left exact if and only if the bottom row is left exact.
\end{itemize}
\end{proposition}

\begin{proposition}[{\cite[Lemma 4.16]{chen2023exact}}]
\label{prop: octahedoron of left exact}
Let the following two $3$-term homotopy complexes be left exact:
\[
\begin{tikzcd}
\Ker f \ar[r, "k_f", swap] \ar[rr, "0", bend left=45, "{}" name=zero] & |[alias=Y]| A \ar[r, "f", swap] & A' \quad\,
& & \Ker f' \ar[r, "k_{f'}", swap] \ar[rr, "0", bend left=45, "{}" name=zero'] & |[alias=Y']| A' \ar[r, "f'", swap] & A'' \quad\,\\
\ar[Rightarrow, from=Y, to=zero, "h_f", shorten >=1mm]
\ar[Rightarrow, from=Y', to=zero', "h_{f'}", shorten >=1mm]
\end{tikzcd}
\]
Assume the homotopy fiber product of $A \to A' \leftarrow \Ker f'$ exists. Then, the following diagram exists:
\[
\begin{tikzcd}
\Ker f \ar[r, equal] \ar[d, "u"] & \Ker f \ar[d, "k_f"] & \\
K \ar[r, "k"] \ar[ru, Rightarrow, "s", shorten <=3mm, shorten >=3mm] \ar[d, "v"] & A \ar[r, "f' \circ f"] \ar[d, "f"] & A'' \ar[d, equal] \\
\Ker f' \ar[r, "k_{f'}"] \ar[ru, Rightarrow, "s'", shorten <=3mm, shorten >=3mm] & A' \ar[ru, Rightarrow, "0", shorten <=3mm, shorten >=3mm] \ar[r, "f'"] & A''
\end{tikzcd}
\]
Here, the bottom-left square is a homotopy fiber product, and the leftmost column $(u, v, h)$ is left exact, where $h$ is a degree $-1$ morphism $h \colon \Ker f \to \Ker f'$. Furthermore, the following morphisms between the $3$-term homotopy complexes exist:
\begin{itemize}
    \item [(i)] There exists $t \colon \Ker f \to A'$ such that $(\id_{\Ker f}, k, k_{f'}, s, s', t)$ is a morphism from $(u, v, h)$ to $(k_f, f, h_f)$.
    \item [(ii)] Let $h'' := h_{f'} \circ v - f' \circ s'$. Then, $(v, f, \id_{A''}, -s', 0, 0)$ is a morphism from $(k, f' \circ f, h'')$ to $(k_{f'}, f', h_{f'})$.
    \item [(iii)] There exists $t' \colon \Ker f \to A''$ such that $(u, \id_A, f', -s, 0, t')$ is a morphism from $(k_f, f, h_f)$ to $(k, f' \circ f, h'')$.
\end{itemize}
\end{proposition}
\begin{proof}
Let $K$ be the homotopy fiber product of $A \rightarrow A' \leftarrow \Ker f'$. 
In this situation, we obtain a homotopy triple complex $(u,v,h)$ and a morphism 
$(\id_{\Ker f},k,k_{f'},s,s',t)$ from $(u,v,h)$ to $(k_f,f,h_f)$. 
By Proposition \ref{prop: pullback of seq}, the triple complex $(u,v,h)$ forms a left exact sequence. 
Regarding the morphisms between homotopy triple complexes, (i) and (iii) are induced by homotopy kernels, 
so there is no issue. One can verify (ii) by computing the differentials.
\end{proof}

Exact DG categories were introduced in \cite{chen2023exact}. The homotopy categories of exact DG categories naturally form extriangulated categories. Since abelian $n$-truncated DG-categories introduced in this work inherit the structure of exact DG categories, we provide their definitions and discuss their properties.


\begin{definition}[{\cite[Definition 5.1]{chen2023exact}}]
\label{definition: exact dg}
Let $\MA$ be a DG-category, and let $\MS \subset \mathcal{H}_{\mathrm{3t}}(\MA)$ be a subcategory closed under isomorphisms. For any $(f, g, h) \in \MS$, we call $f$ an inflation and $g$ a deflation. The pair $(\MA, \MS)$ is called a \it{exact DG-category} if it satisfies the following conditions:
\begin{itemize}
    \item [(Ex0):] $\id_A$ is a deflation for all $A \in \MA$.
    \item [(Ex1):] Deflations are closed under composition.
    \item [(Ex2):] For any deflation $g \colon B' \to C'$, the homotopy fiber product of the diagram $B' \xrightarrow{g} C' \xleftarrow{c} C$ exists, and deflations are preserved under homotopy fiber products.
    \item [(Ex2)':] The dual of (Ex2).
\end{itemize}
\end{definition}

\begin{theorem}[{\cite[Theorem 6.19]{chen2023exact}}]
\label{thm: enhance of et}
Let $(\MA, \MS)$ be an exact DG-category. Then, $H^0(\MA)$ naturally inherits the structure of an extriangulated category, where the conflations in this extriangulated category correspond to the objects in $\MS$.
\end{theorem}

\begin{proposition}[{\cite[Example-Definition 5.7]{chen2023exact}}]
\label{prop: ect-cl exact dg}
Let $(\MA, \MS)$ be an exact DG-category. If a full DG subcategory $\MA' \subset \MA$ is closed under homotopy equivalence and extensions, i.e., if the middle object of any conflation whose both ends are in $\MA'$ also belongs to $\MA'$, then $(\MA', \MS')$ is also an exact DG-category. Here, $\MS'$ is the subcategory of $\MS$ consisting of conflations where all three objects belong to $\MA'$.
\end{proposition}

\section{preabelian DG-categories}\label{sec: preabel}

Before discussing abelian $n$-truncated DG-categories, we introduce preabelian DG-categories. An additive category $\MA$ is a preabelian DG-category if it is a preabelian category in the usual sense. This section's main theorem asserts that the homotopy category of a preabelian DG-category naturally inherits a pretriangulated structure. 

\subsection{Loop functors and suspension functors}

In this subsection, we study the basic properties of the loop functor and the suspension functor, which are essential for defining a pretriangulated structure.

\begin{definition}
Let $\MA$ be a DG-category.
\begin{itemize}
    \item If $H^0(\MA)$ is additive, $\MA$ is called an additive DG-category.
    \item If $\MA$ is additive and admits homotopy kernels (resp. homotopy cokernels), $\MA$ is called \it{finitely complete} (resp. \it{finitely cocomplete}).
    \item If $\MA$ is both finitely complete and finitely cocomplete, $\MA$ is called a preabelian DG-category.
\end{itemize}
\end{definition}

\begin{definition}
Let $\MA$ be a DG-category, and let $X \in \MA$. If $\Omega X \in \MA$ and a closed morphism $h_X \colon \Omega X \to X$ of degree $-1$ induce the following isomorphism for all $i \leq 0$, then $(\Omega X, h_X)$ is called the \it{loop object} of $X$:
\[
H^i(s^{-}_{X^\land} \circ ({h_X}^\land)) \colon H^i(\Omega X^\land) \to H^i(X^\land[-1]),
\]
where $s_{X^\land}^{-} \colon X^\land \to X^\land[-1]$ is a shift morphism of degree $1$.
\end{definition}

The suspension object is defined dually.

\begin{definition}
Let $\MA$ be a DG-category, and let $X \in \MA$. If $\Sigma X \in \MA$ and a closed morphism $h^X \colon X \to \Sigma X$ of degree $-1$ induce the following isomorphism for all $i \leq 0$, then $(\Sigma X, h^X)$ is called the \it{suspension object} of $X$:
\[
H^i(s^{-}_{X^\lor} \circ ({h^X}^\lor)) \colon H^i(\Sigma X^\lor) \to H^i(X^\lor[-1]).
\]
\end{definition}

\begin{remark}
Loop objects and suspension objects, if they exist, are unique up to homotopy equivalence.
\end{remark}

The universality of loop objects can be described as follows:

\begin{proposition}
For $X \in \MA$, let $X' \in \MA$ and a closed morphism $h \colon X' \to X$ of degree $-1$ be given. Then, the following are equivalent:
\begin{itemize}
    \item [(i)] $(X', h)$ is the loop object of $X$.
    \item [(ii)]
    \begin{itemize}
        \item [(1)] For any morphism $f \colon Y \to X$ of degree $i$, there exist a degree $i+1$ closed morphism $f' \colon Y \to X'$ and a degree $i-1$ morphism $\overline{f'} \colon Y \to X$ such that $d(\overline{f'}) = f - h \circ f'$.
        \item [(2)] If $f'_0, f'_1 \colon Y \to X'$ satisfy (1), there exists a morphism $r \colon Y \to X'$ of degree $i$ such that $d(r) = f'_1 - f'_0$.
    \end{itemize}
\end{itemize}
\end{proposition}
\begin{proof}
Immediately from the definition of loop objects.
\end{proof}

\begin{definition}
Suppose $\MA$ has loop objects for all objects. Then, the functor $\Omega \colon H^0(\MA) \to H^0(\MA)$ is defined as follows:
\begin{itemize}
    \item $\Omega$ maps $X$ to $\Omega X$ (choosing a representative from the isomorphism class of $\Omega X$).
    \item The morphism $H^0(\MA)(X, Y) \to H^0(\MA)(\Omega X, \Omega Y)$ is defined as the composition:
    \[
    H^0(\MA(X, Y)) \xrightarrow{H^0(s^{-}_{(\Omega X)^\lor} \circ (h_X)^\lor)} H^0(\MA(\Omega X, Y)[-1]) \xrightarrow{H^0(s^{-}_{(\Omega X)^\lor} \circ (h_Y)^\land)^{-1}} H^0(\MA(\Omega X, \Omega Y)).
    \]
\end{itemize}
\end{definition}

\begin{remark}\label{rem: Omega corresponds}
From the construction of $\Omega$, $f' = \Omega f$ in $H^0(\MA)$ if and only if there exists a degree $-2$ morphism $t \colon \Omega X \to Y$ such that $d(t) = h_Y \circ f' - f \circ h_X$. Thus, verifying that $\Omega$ is a functor is straightforward.
\end{remark}

\begin{remark}\label{rem: omega is rep obj}
The above definition immediately implies the following isomorphism in $\Mod H^0(\MA)$: $\Omega X$ represents $H^{-1}(\MA(-, X))$ in $H^0(\MA)$:
\[
H^0(\MA(-, \Omega X)) \cong H^{-1}(\MA(-, X)).
\]
\end{remark}

Dually, we define the suspension functor.

\begin{definition}
Assume that $\MA$ has suspension objects for all objects. The functor $\Sigma \colon H^0(\MA) \to H^0(\MA)$ is defined as follows:
\begin{itemize}
    \item $\Sigma$ maps $X$ to $\Sigma X$ (choosing a representative from the isomorphism class of $\Sigma X$).
    \item The morphism $H^0(\MA)(X, Y) \to H^0(\MA)(\Sigma X, \Sigma Y)$ is defined as the composition:
    \[
    H^0(\MA(X, Y)) \xrightarrow{H^0(s^{-}_{(\Sigma Y)^\land} \circ (h^Y)^\land)} H^0(\MA(X, \Sigma Y)[-1]) \xrightarrow{H^0(s^{-}_{(\Sigma Y)^\land} \circ (h^X)^\lor)^{-1}} H^0(\MA(\Sigma X, \Sigma Y)).
    \]
\end{itemize}
\end{definition}

\begin{proposition}\label{prop: adjoint of omega}
If $\MA$ has loop objects and suspension objects for all objects, then $\Sigma \dashv \Omega$ holds in $H^0(\MA)$.
\end{proposition}
\begin{proof}
Consider the following isomorphism, which we show establishes the adjunction:
\[
\phi := H^0(\MA(\Sigma X, Y)) \xrightarrow{H^0(s^{-}_Y \circ (h^X)^\lor)} H^0(\MA(X, Y)[-1]) \xrightarrow{-H^0(s^{-}_Y \circ (h_Y)^\land)^{-1}} H^0(\MA(X, \Omega Y)).
\]

By the definition of $\phi$, for any $f \in Z^0(\MA)(\Sigma X, Y)$ and $f' \in Z^0(\MA)(X, \Omega Y)$, $\phi(f) = f'$ if and only if there exists a degree $-2$ morphism $t \colon X \to Y$ such that $d(t) = f \circ h^X + h_Y \circ f'$.

We now verify the naturality of $\phi$. By duality, it suffices to show that for any closed morphism $g \colon \Sigma X' \to Y$ and $f \colon X \to X'$ of degree $0$, $\phi(g \circ \Sigma f) = \phi(g) \circ f$ holds. Consider the following diagram:
\[
\begin{tikzcd}
X \ar[r, "f"] \ar[d, "h^X"] & X' \ar[r, "\phi(g)"] \ar[d, "h^{X'}"] & \Omega Y \ar[d, "h_Y"] \\
\Sigma X \ar[r, "\Sigma f"] & \Sigma X' \ar[r, "g"] & Y
\end{tikzcd}
\]

By Remark \ref{rem: Omega corresponds}, there exists a degree $-2$ morphism $t \colon X \to \Sigma X'$ such that $d(t) = \Sigma f \circ h^X - h^{X'} \circ f$. Additionally, by the definition of $\phi$, there exists a degree $-2$ morphism $t' \colon X' \to Y$ such that $d(t') = g \circ h^{X'} + h_Y \circ \phi(g)$. Therefore, the morphism $g \circ t + t' \circ f \colon X \to Y$ satisfies $d(g \circ t + t' \circ f) = h_Y \circ \phi(g) \circ f + g \circ \Sigma f \circ h^X$, proving $\phi(g \circ \Sigma f) = \phi(g) \circ f$.
\end{proof}

\begin{remark}\label{rem: adjunction corresponds}
As shown in the proof of Proposition \ref{prop: adjoint of omega}, $\phi(f) = f'$ if and only if there exists a degree $-2$ morphism $t \colon X \to Y$ such that $d(t) = f \circ h^X + h_Y \circ f'$.
\end{remark}

\begin{proposition}\label{prop: preabelian admits loop}
If $\MA$ is a finitely complete DG-category, then $\MA$ has loop objects for all objects.
\end{proposition}
\begin{proof}
For $X \in \MA$, consider the zero morphism $I \xrightarrow{0} X$ from a contractible object $I$. Take its homotopy kernel:
\[
\begin{tikzcd}
X' \ar[r, "k", swap] \ar[rr, "0", bend left=45, "{}" name=zero] & |[alias=Y]| I \ar[r, "0", swap] & X
\ar[Rightarrow, from=Y, to=zero, "h_X", shorten >=1mm]
\end{tikzcd}
\]

By the definition of the homotopy kernel, $d(h_X) = 0$, and $h_X$ induces the isomorphism 
$$H^i(s^{-}_{X^\land} \circ (h_X^\land)) \colon H^i(X'^\land) \to H^i(X^\land[-1])$$
for all $i \leq 0$. Hence, $(X', h_X)$ is the loop object of $X$.
\end{proof}

\subsection{preabelian DG-categories and their homotopy categories}

In this section, we aim to prove the following statement:

\begin{theorem}\label{thm: pretriang}
Let $\MA$ be a preabelian DG-category. Then, $H^0(\MA)$ naturally inherits a pretriangulated structure $(\Omega, \Sigma, \Delta, \nabla)$ (see Definition \ref{def: pretr}).
\end{theorem}

We first discuss left triangles in $H^0(\MA)$. Let $f_0 \colon A_1 \to A_0$ be a degree 0 closed morphism. By Proposition \ref{prop: pullback of seq} (ii), we obtain the following diagram and a morphism between $3$-term homotopy complexes $(\id_{\Omega A_0}, i, f_0, q, h_{f_0}, \eta_{f_0})$. Here, $I$ is a contractible object:
\[
\begin{tikzcd}
\Omega A_0 \ar[r, "f_2"] \ar[d, equal] \ar[rr, bend left=45, "0" name=0] & |[alias=B_0]| A_2 \ar[r, "f_1"] \ar[d, "i"] & |[alias=C_0]| A_1 \quad \ar[d, "f_0"] \\
|[alias=A]|\Omega A_0 \ar[r] \ar[rr, bend right=45, "0" name=0', swap] & |[alias=B_1]| I \ar[r, "0", swap] & A_0 \quad
\ar[Rightarrow, from=C_0, to=B_1, "h_{f_0}"]
\ar[Rightarrow, from=B_0, to=A, "q"]
\ar[Rightarrow, from=B_0, to=0, shorten >=1mm, "h_{f_1}"]
\ar[Rightarrow, from=B_1, to=0', shorten >=1mm, "h_{A_0}"]
\end{tikzcd}
\]

Here, the lower row and the upper row are left exact, and the right square is homotopy Cartesian. Since $I$ is contractible, the $3$-term homotopy complex $(f_0, f_1, h_{f_0})$ is left exact.

The diagram $\Omega A_0 \to A_2 \to A_1 \to A_0$ in $H^0(\MA)$ obtained from this is called the standard left triangle associated with $f_0$. The collection of all triangles in $H^0(\MA)$ isomorphic to these standard left triangles is denoted by $\Delta$.

\begin{remark}
In the above diagram, $d(\eta_{f_0}) = f_0 \circ h_{f_1} - h_{f_0} \circ f_2 - h_{A_0}$.
\end{remark}

Dually, the standard right triangle associated with $f_0 \colon A_0 \to A_1$, denoted as $A_0 \xrightarrow{f_0} A_1 \xrightarrow{f_1} A_2 \xrightarrow{f_2} \Sigma A_0$, is defined by the following diagram:
\[
\begin{tikzcd}
\quad A_0 \ar[r, "0"] \ar[d, "f_0"] \ar[rr, bend left=45, "0" name=0] & |[alias=B_0]| I' \ar[r] \ar[d, "i'"] & |[alias=C_0]| \Sigma A_0 \ar[d, equal] \\
|[alias=A]|\quad A_1 \ar[r, "f_1", swap] \ar[rr, bend right=45, "0" name=0', swap] & |[alias=B_1]| A_2 \ar[r, "f_2", swap] & \Sigma A_0
\ar[Rightarrow, from=C_0, to=B_1, "q'"]
\ar[Rightarrow, from=B_0, to=A, "-h^{f_0}"]
\ar[Rightarrow, from=B_0, to=0, shorten >=1mm, "h^{A_0}"]
\ar[Rightarrow, from=B_1, to=0', shorten >=1mm, "h^{f_1}"]
\end{tikzcd}
\]

Here, both the upper and lower rows are right exact, and the right square is homotopy co-Cartesian. Moreover, $I'$ is a contractible object. There exists a degree $-2$ morphism $\eta^{f_0} \colon A_0 \to \Sigma A_0$, and the morphism $(f_0, i', \id_{\Sigma A_0}, -h^{f_0}, q', \eta^{f_0})$ forms a morphism of $3$-term homotopy complexes. Using these, we can define $\nabla$ similarly to $\Delta$.

\begin{remark}
In the above diagram, $\eta^{f_0} \colon A_0 \to \Sigma A_0$ satisfies $d(\eta^{f_0}) = h^{A_0} + f_2 \circ h^{f_0} - h^{f_1} \circ f_0$.
\end{remark}

\begin{lemma}\label{lem: LT3}
Let the left diagram represent a left exact $3$-term homotopy complex and a morphism between such complexes $(a_2, a_1, a_0, s', s, t)$. Let the right diagram represent a $3$-term homotopy complex and a morphism $(a', a_2, a_1, s'', s', t')$, obtained by the universality of the homotopy kernel. Then, in $H^0(\MA)$, we have $a' = \Omega a_0$.
\[
\begin{tikzcd}
A_2 \ar[r, "f_1"] \ar[d, "a_2"] \ar[rr, bend left=45, "0" name=0] & |[alias=B_0]| A_1 \ar[r, "f_0"] \ar[d, "a_1"] & |[alias=C_0]| A_0 \ar[d, "a_0"] \\
|[alias=A]| B_2 \ar[r, "g_1", swap] \ar[rr, bend right=45, "0" name=0', swap] & |[alias=B_1]| B_1 \ar[r, "g_0", swap] & B_0
\ar[Rightarrow, from=C_0, to=B_1, "s"]
\ar[Rightarrow, from=B_0, to=A, "s'"]
\ar[Rightarrow, from=B_0, to=0, shorten >=1mm, "h_{f_0}"]
\ar[Rightarrow, from=B_1, to=0', shorten >=1mm, "h_{g_0}"]
\end{tikzcd}
\quad
\begin{tikzcd}
\Omega A_0 \ar[r, "f_2"] \ar[d, "a'", dashed] \ar[rr, bend left=45, "0" name=0]
& |[alias=B_0]| A_2 \ar[r, "f_1"] \ar[d, "a_2"]
& |[alias=C_0]| A_1 \, \ar[d, "a_1"] \\
|[alias=A]| \Omega B_0 \ar[r, "g_2", swap] \ar[rr, bend right=45, "0" name=0', swap]
& |[alias=B_1]| B_2 \ar[r, "g_1", swap]
& B_1 \,
\ar[Rightarrow, from=C_0, to=B_1, "s'"]
\ar[Rightarrow, from=B_0, to=A, "s''"]
\ar[Rightarrow, from=B_0, to=0, shorten >=1mm, "h_{f_1}"]
\ar[Rightarrow, from=B_1, to=0', shorten >=1mm, "h_{g_1}"]
\end{tikzcd}
\]
\end{lemma}

\begin{proof}
Note that the degree $-2$ morphism $t' \colon \Omega A_0 \to B_1$ satisfies $d(t') = a_1 \circ h_{f_1} - s' \circ f_2 - g_1 \circ s'' - h_{g_1} \circ a'$. By Remark \ref{rem: Omega corresponds}, it suffices to show that there exists a degree $-2$ morphism $\widetilde{\eta} \colon \Omega A_0 \to B_2$ such that $d(\widetilde{\eta}) = h_{B_0} \circ a' - a_0 \circ h_{A_0}$. Define $\widetilde{\eta}$ as follows:
\[
\widetilde{\eta} = a_0 \circ \eta_{f_0} + t \circ f_2 + s \circ h_{f_1} - g_0 \circ t' + h_{g_0} \circ s'' - \eta_{g_0} \circ a'.
\]

We verify that $d(\widetilde{\eta}) = h_{B_0} \circ a' - a_0 \circ h_{A_0}$ through the following calculation:
\begin{align*}
    d(a_0 \circ \eta_{f_0} + t \circ f_2 + s \circ h_{f_1}& - g_0 \circ t' + h_{g_0} \circ s'' - \eta_{g_0} \circ a') \\
    =& \, a_0 \circ (f_0 \circ h_{f_1} - h_{f_0} \circ f_2 - h_{A_0}) \\
    &+ (a_0 \circ h_{f_0} - s \circ f_1 - g_0 \circ s' - h_{g_0} \circ a_2) \circ f_2 \\
    &+ (g_0 \circ a_1 - a_0 \circ f_0) \circ h_{f_1} - s \circ (-f_1 \circ f_2) \\
    &- g_0 \circ (a_1 \circ h_{f_1} - s' \circ f_2 - g_1 \circ s'' - h_{g_1} \circ a') \\
    &+ (-g_0 \circ g_1) \circ s'' - h_{g_0} \circ (g_2 \circ a' - a_2 \circ f_2) \\
    &- (g_0 \circ h_{g_1} - h_{g_0} \circ g_2 - h_{B_0}) \circ a' \\
    =& \, h_{B_0} \circ a' - a_0 \circ h_{A_0}.
\end{align*}
\end{proof}

\begin{proposition}\label{prop: left triang}
Let $\MA$ be a finitely complete DG-category. Then, $(H^0(\MA), \Omega, \Delta)$ is a left triangulated category (cf. Definition \ref{def: left triangulated cat}).
\end{proposition}
\begin{proof}
We verify the axioms one by one.

(LT1): For any morphism $f \colon A \to A'$ in $H^0(\MA)$, a standard left triangle exists by construction.  
Moreover, if $I$ is a contractible object and $A \in \MA$, the homotopy kernel of $A \to I$ is given by $A \xrightarrow{\id_A} A$ (Lemma \ref{lem: splits are homotopy short exact}). Thus, $0 \to A = A \to 0$ forms a left triangle in $H^0(\MA)$.

(LT2): Let $f_0$ be a closed morphism of degree $0$, and consider the following two diagrams:

\[
\begin{tikzcd}
\Omega A_0 \ar[r, "f_2"] \ar[d, equal] \ar[rr, bend left=45, "0" name=0]
& |[alias=B_0]| A_2 \ar[r, "f_1"] \ar[d, "i"]
& |[alias=C_0]| A_1 \, \ar[d, "f_0"] \\
|[alias=A]|\Omega A_0 \ar[r] \ar[rr, bend right=45, "0" name=0', swap]
& |[alias=B_1]| I \ar[r, "0", swap]
& A_0 \,
\ar[Rightarrow, from=C_0, to=B_1, "h_{f_0}"]
\ar[Rightarrow, from=B_0, to=A, "q"]
\ar[Rightarrow, from=B_0, to=0, shorten >=1mm, "h_{f_1}"]
\ar[Rightarrow, from=B_1, to=0', shorten >=1mm, "h_{A_0}"]
\end{tikzcd}
\quad
\begin{tikzcd}
\Omega A_1 \ar[r, "f_3"] \ar[d, equal] \ar[rr, bend left=45, "0" name=0]
& |[alias=B_0]|\Omega A_0 \ar[r, "f_2"] \ar[d, "i'"]
& |[alias=C_0]| A_2 \, \ar[d, "f_1"] \\
|[alias=A]|\Omega A_1 \ar[r] \ar[rr, bend right=45, "0" name=0', swap]
& |[alias=B_1]| I' \ar[r, "0", swap]
& A_1 \,
\ar[Rightarrow, from=C_0, to=B_1, "h_{f_1}"]
\ar[Rightarrow, from=B_0, to=A, "q'"]
\ar[Rightarrow, from=B_0, to=0, shorten >=1mm, "h_{f_2}"]
\ar[Rightarrow, from=B_1, to=0', shorten >=1mm, "h_{A_1}"]
\end{tikzcd}
\]

Here, $(\id_{\Omega A_0}, i, f_0, q, h_{f_0}, \eta_{f_0})$ and $(\id_{\Omega A_1}, i', f_1, q', h_{f_1}, \eta_{f_1})$ are morphisms of $3$-term homotopy complexes. Thus, the following equations hold:
\begin{itemize}
    \item $d(\eta_{f_0}) = f_0 \circ h_{f_1} - h_{f_0} \circ f_2 - h_{A_0}$
    \item $d(\eta_{f_1}) = f_1 \circ h_{f_2} - h_{f_1} \circ f_3 - h_{A_1}$
\end{itemize}

We obtain two standard triangles:
\[
\Omega A_0 \xrightarrow{f_2} A_2 \xrightarrow{f_1} A_1 \xrightarrow{f_0} A_0,
\quad
\Omega A_1 \xrightarrow{f_3} \Omega A_0 \xrightarrow{f_2} A_2 \xrightarrow{f_1} A_1.
\]

It suffices to show that $-f_3 = \Omega f_0$ in $H^0(\MA)$. Consider the morphism $f_0 \circ \eta_{f_1} + \eta_{f_0} \circ f_3 + h_{f_0} \circ h_{f_2}$ of degree $-2$. Its differential is computed as follows:
\begin{align*}
d(f_0 \circ \eta_{f_1} + \eta_{f_0} \circ f_3 + h_{f_0} \circ h_{f_2})=& \, f_0 \circ f_1 \circ h_{f_2} - f_0 \circ h_{f_1} \circ f_3 - f_0 \circ h_{A_1} \\
 &+ f_0 \circ h_{f_1} \circ f_3 - h_{f_0} \circ f_2 \circ f_3 - h_{A_0} \circ f_3 \\
 &- f_0 \circ f_1 \circ h_{f_2} + h_{f_0} \circ f_2 \circ f_3 \\
=& -h_{A_0} \circ f_3 - f_0 \circ h_{A_1}.
\end{align*}
Thus, by Remark \ref{rem: Omega corresponds}, $-f_3 = \Omega f_0$ in $H^0(\MA)$.

(LT3): This follows immediately from Lemma \ref{lem: LT3}.

(LT4): For two morphisms $f \colon B \to A$ and $g \colon C \to B$, their standard left triangles are as follows:
\[
\Omega A \xrightarrow{f''} F \xrightarrow{f'} B \xrightarrow{f} A, \quad
\Omega B \xrightarrow{g''} E \xrightarrow{g'} C \xrightarrow{g} B.
\]

Here, $(f', f, h_f)$ and $(g', g, h_g)$ are left exact $3$-term homotopy complexes:
\[
\begin{tikzcd}
F \ar[r, "f'", swap] \ar[rr, "0", bend left=45, "{}" name=zero] & |[alias=Y]| B \ar[r, "f", swap] & A
\ar[Rightarrow, from=Y, to=zero, "h_f", shorten >=1mm]
\end{tikzcd}
\quad
\begin{tikzcd}
E \ar[r, "g'", swap] \ar[rr, "0", bend left=45, "{}" name=zero] & |[alias=Y]| C \ar[r, "g", swap] & B
\ar[Rightarrow, from=Y, to=zero, "h_g", shorten >=1mm]
\end{tikzcd}
\]

By Proposition \ref{prop: octahedoron of left exact}, we obtain the following diagram:
\[
\begin{tikzcd}
E \ar[r, "u'"] \ar[d, equal] & D \ar[r, "u"] \ar[d, "v'"] & F \ar[d, "f'"] \\
E \ar[r, "g'"] & C \ar[r, "g"] \ar[d, "f \circ g"] & B \ar[d, "f"] \\
& A \ar[r, equal] & A
\end{tikzcd}
\]

There exists $h_u \colon E \to F$ such that $(u', u, h_u)$ forms a left exact $3$-term homotopy complex. Thus, we obtain the standard left triangles $\Omega F \xrightarrow{u''} E \xrightarrow{u'} D \xrightarrow{u} F$ and $\Omega A \xrightarrow{v''} D \xrightarrow{v'} C \xrightarrow{f \circ g} A$.  
Applying Lemma \ref{lem: LT3} to the three morphisms from Proposition \ref{prop: octahedoron of left exact}, the following diagrams in $H^0(\MA)$ commute:
\[
\begin{tikzcd}
\Omega A \ar[d, "v''"] \ar[r, equal] & \Omega A \ar[d, "f''"] &  
\Omega F \ar[d, "\Omega f'"] \ar[r, "u''"] & E \ar[d, equal] &  
\Omega B \ar[d, "g''"] \ar[r, "\Omega f"] & \Omega A \ar[d, "v''"] \\
D \ar[r, "u"] & F &  
\Omega B \ar[r, "g''"] & E &  
E \ar[r, "u'"] & D.
\end{tikzcd}
\]

Thus, (LT4) is verified.
\end{proof}

Dually, we also have the following proposition:

\begin{proposition}\label{prop: right triang}
Let $\MA$ be a finitely cocomplete DG-category. Then, $(H^0(\MA), \Sigma, \nabla)$ forms a right triangulated category.
\end{proposition}

We are now ready to prove Theorem \ref{thm: pretriang}.

\begin{proof}[Proof of Theorem \ref{thm: pretriang}]
It suffices to verify the third axiom of Definition \ref{def: pretr}.

Consider the following diagram in $H^0(\MA)$:
\[
\begin{tikzcd}
A \ar[r, "f_0"] \ar[d, "\overline{\gamma}" swap] & B \ar[r, "f_1"] & C \ar[r, "f_2"] \ar[d, "\beta"] & \Sigma A \ar[d, "\gamma"] \\
\Omega C' \ar[r, "f'_0"] & A' \ar[r, "f'_1"] & B' \ar[r, "f'_2"] & C'
\end{tikzcd}
\]

Here, $\overline{\gamma}$ is the morphism corresponding to $\gamma$ under the adjunction $\Sigma \dashv \Omega$. The upper row is a standard right triangle, the lower row is a standard left triangle, and the right square commutes in $H^0(\MA)$.

Since the right square commutes, there exists a morphism $s \colon C \to C'$ of degree $-1$ such that $d(s) = f'_2 \circ \beta - \gamma \circ f_2$. By Lemma \ref{lem:mor extend to homotopy kernel}, there exist morphisms $\alpha \colon B \to A'$, $s' \colon B \to B'$, and $t \colon B \to C'$ satisfying:
\begin{itemize}
    \item $d(s') = f'_1 \circ \alpha - \beta \circ f_1$,
    \item $d(t) = \gamma \circ h^{f_1} - s \circ f_1 - f'_2 \circ s' - h_{f'_2} \circ \alpha$.
\end{itemize}

Next, consider the following diagram:
\[
\begin{tikzcd}
B \ar[r, "f_1"] \ar[d, "\alpha", swap] & |[alias=Z]| C \ar[d, "\beta"] \\
|[alias=A]| A' \ar[r, "f'_1", swap] & B'
\ar[Rightarrow, "s'", from=Z, to=A, shorten <=2mm, shorten >=2mm]
\end{tikzcd}
\]

Applying Lemma \ref{lem:mor extend to homotopy kernel} again, there exist a closed morphism $\gamma' \colon A \to \Omega C'$ of degree $0$ and morphisms $s'' \colon A \to A'$ and $t' \colon A \to B'$ such that:
\begin{itemize}
    \item $d(s'') = f'_0 \circ \gamma' - \alpha \circ f_0$,
    \item $d(t') = \beta \circ h^{f_0} - s' \circ f_0 - f'_1 \circ s'' - h_{f'_1} \circ \gamma'$.
\end{itemize}

It remains to show that $\gamma' = \overline{\gamma}$ in $H^0(\MA)$.

From the definitions of the standard left and right triangles, note that there exist morphisms $\eta^{f_0} \colon A \to \Sigma A$ and $\eta_{f'_2} \colon \Omega C' \to C'$ of degree $-2$ satisfying:
\begin{itemize}
    \item $d(\eta^{f_0}) = h^A + f_2 \circ h^{f_0} - h^{f_1} \circ f_0$,
    \item $d(\eta_{f'_2}) = f'_2 \circ h_{f'_1} - h_{f'_2} \circ f'_0 - h_{C'}$.
\end{itemize}

Consider the morphism of degree $-2$ :
\[
\gamma \circ \eta^{f_0} + t \circ f_0 + s \circ h^{f_0} - f'_2 \circ t' + h_{f'_2} \circ s'' - \eta_{f'_2} \circ \gamma'.
\]

Its differential is computed as follows:
\begin{align*}
d(\gamma \circ \eta^{f_0} + t \circ f_0 + s \circ h^{f_0} &- f'_2 \circ t' + h_{f'_2} \circ s'' - \eta_{f'_2} \circ \gamma') \\
=& \, \gamma \circ (h^A + f_2 \circ h^{f_0} - h^{f_1} \circ f_0) \\
 &+ (\gamma \circ h^{f_1} - s \circ f_1 - f'_2 \circ s' - h_{f'_2} \circ \alpha) \circ f_0 \\
 &+ (f'_2 \circ \beta - \gamma \circ f_2) \circ h^{f_0} - s \circ (-f_1 \circ f_0) \\
 &- f'_2 \circ (\beta \circ h^{f_0} - s' \circ f_0 - f'_1 \circ s'' - h_{f'_1} \circ \gamma') \\
 &+ (-f'_2 \circ f'_1) \circ s'' - h_{f'_2} \circ (f'_0 \circ \gamma' - \alpha \circ f_0) \\
 &- (f'_2 \circ h_{f'_1} - h_{f'_2} \circ f'_0 - h_{C'}) \circ \gamma' \\
=& \, h_{C'} \circ \gamma' + \gamma \circ h^A.
\end{align*}

Thus, by Remark \ref{rem: adjunction corresponds}, we conclude that $\gamma' = \overline{\gamma}$ in $H^0(\MA)$.
\end{proof}

\section{Abelian $n$-truncated DG-categories}\label{sec: abel}

\subsection{Additive $n$-truncated DG-categories}
Here, we explore the basic properties of the notion of $n$-tuncated DG-categories. Throughout this section, $\MA$ denotes a non-positive DG-category. 

\begin{definition}\label{def: n,1-dg}
Let $n$ be a positive integer. If the cohomology of $\MA(X, Y)$ vanishes in degrees below $-n$ for all $X, Y \in \MA$, then $\MA$ is called a \it{$n$-truncated DG-category}. More strictly, if $\MA$ satisfies $\MA(X,Y)^{\leq n}=0$, $\MA$ is called a \it{strict $n$-truncated DG-category}. A $\infty$-truncated DG-category is simply a non-positive DG-category.

If $\MA$ is additive (i.e.\ $H^0(\MA)$ is additive), it is called an \it{additive $n$-truncated DG-category}. 
\end{definition}

This terminology is consistent with $(n,1)$-categories in the sense of \cite[Definition 2.3.4.1]{lurie2009higher}) as shown in the following proposition.

\begin{proposition}
$\MA$ is a $n$-truncated DG-category if and only if $N_\dg(\MA)$ is categorical equivalent to an $(n,1)$-category.
\end{proposition}
\begin{proof}
For $X, Y \in \MA$, $i \leq 0$, and $x \in \mathrm{Map}_{N_\dg(\MA)}(X, Y)_0$, there exist bijections:
\[
\pi_i(\mathrm{Map}_{N_\dg(\MA)}(X, Y), x) \cong \pi_i(\mathrm{Map}_{N_\dg(\MA)}(X, Y), 0) \cong \pi_i(\mathrm{DK}(\MA(X, Y)), 0) \cong H^{-i}(\MA(X, Y)).
\]

The second isomorphism follows from \cite[Remark 1.3.1.12]{lurie2017higheralgebra}, and the third follows from the construction of the Dold-Kan correspondence. Here, $\mathrm{Map}_{N_\dg(\MA)(X, Y)}$ denotes the mapping space of $N_\dg(\MA)$ (cf.\ \cite[Definition 1.2.2.1]{lurie2009higher}), $\pi_i$ denotes the $i$-th homotopy group of a simplicial set, and $\mathrm{DK} \colon \RC(k)^{\leq 0} \to \mathrm{sSet}$ is the Dold-Kan correspondence. Thus, the vanishing of the cohomology of $\MA$'s morphism complexes below degree $-n$ is equivalent to the $n$-truncatedness of mapping spaces of $N_\dg(\MA)$. By \cite[Proposition 2.3.4.18]{lurie2009higher}, the claim follows.
\end{proof}

\begin{remark}
$\MA$ is additive if and only if $N_\dg(\MA)$ is additive as an $(\infty,1)$-category (cf.\ \cite[Definition 2.6]{gepner2016universality}). This equivalence follows from the fact that an $(\infty,1)$-category is additive if and only if its homotopy category is additive (cf.\ \cite[Proposition 2.8]{gepner2016universality}).
\end{remark}

\begin{example}
For a non-positive DG-category $\MC$, define the DG-category $\RD_\dg^n(\MC)$ as the full subcategory of $\tau_{\leq 0}(\RD_\dg(\MC))$ consisting of DG modules whose cohomologies are concentrated in degrees $-n+1$ through $0$. Then, $\RD_\dg^n(\MC)$ is an additive $n$-truncated DG-category.
\end{example}

\begin{remark}
If $\MC$ is a usual $k$-linear category, then $\RD_\dg^0(\MC)$ coincides with the category of right $\MC$-modules $\Mod \MC$.
\end{remark}

\begin{definition}\label{def: n-mono}
Let $f \colon X \to Y$ be a closed morphism of degree $0$. If the induced morphism in $\Mod H^0(\MA)$,
\[
H^i(f^\land) \colon H^i(X^\land) \to H^i(Y^\land),
\]
is a monomorphism for $i = -n+1$ and an isomorphism for $i \leq -n$, then $f$ is called an \it{$n$-monomorphism}. $f$ is called an \it{$n$-epimorphism} if it is $n$-monomorphism in the opposite DG-category $\MA^\op$. We regard any closed morphism as both an $\infty$-monomorphism and an $\infty$-epimorphism.

\end{definition}

\begin{remark}\label{rem: n-mono}
Concerning $n$-monomorphisms, note the following:
\begin{itemize}
    \item[(i)] The homotopy kernel of an $n$-monomorphism is generally not contractible.
    \item[(ii)] If $\MA$ is an $n$-truncated DG-category, then $f$ is an $n$-momomorphism if and only if $H^{-n+1}(f^\land)$ is a monomorphism in $\Mod H^0(\MA)$.
\end{itemize}
\end{remark}

When $\MA$ is a preabelian DG-category, its homotopy category naturally admits a pretriangulated structure. The notions of $n$-monomorphisms and $n$-epimorphisms can be characterized using the pretriangulated structure of the homotopy category as follows:

\begin{proposition}
Let $\MA$ be a finitely complete DG-category, and let $(\Omega, \Delta)$ denote the natural left triangulated structure on its homotopy category $H^0(\MA)$ (cf.\ Proposition \ref{prop: left triang}). For any closed morphism $f$ of degree $0$, the following are equivalent:
\begin{itemize}
    \item[(i)] $f$ is an $n$-monomorphism.
    \item[(ii)] $\Omega^{n-1}(f)$ is a monomorphism in $H^0(\MA)$, and $\Omega^{\geq n}(f)$ is an isomorphism in $H^0(\MA)$.
\end{itemize}
\end{proposition}
\begin{proof}
This follows directly from the fact that $\Omega X$ represents $H^{-1}(\MA(-,X))$ in $H^0(\MA)$.
\end{proof}

Dually, we have the following:

\begin{proposition}
Let $\MA$ be a finitely cocomplete DG-category, and let $(\Sigma, \nabla)$ denote the natural right triangulated structure on its homotopy category $H^0(\MA)$ (cf.\ Proposition \ref{prop: right triang}). For any degree 0 closed morphism $f$, the following are equivalent:
\begin{itemize}
    \item[(i)] $f$ is an $n$-epimorphism.
    \item[(ii)] $\Sigma^{n-1}(f)$ is an epimorphism in $H^0(\MA)$, and $\Sigma^{\geq n}(f)$ is an isomorphism in $H^0(\MA)$.
\end{itemize}
\end{proposition}

We now examine some basic properties of $n$-monomorphisms.

\begin{proposition}\label{proposition: homotopy kernel of n-mono}
Let $f \colon A \to A'$ be a closed morphism of degree $0$ such that the homotopy kernel $\Ker f$ exists. Then, $f$ is an $n$-monomorphism if and only if $H^i((\Ker f)^\land) = 0$ for all $i \leq -n+1$.
\end{proposition}
\begin{proof}
From Proposition \ref{prop: long exact induced by left exact}, we obtain the following long exact sequence:
\[
\begin{tikzcd}[row sep=0.3cm]
\cdots \ar[r, "\delta^i"] & H^i((\Ker f)^\land) \ar[r] & H^i(A^\land) \ar[r] & H^i(A'^\land) \\
\cdots \ar[r, "\delta^{-1}"] & H^{-1}((\Ker f)^\land) \ar[r] & H^{-1}(A^\land) \ar[r] & H^{-1}(A'^\land) \\
\ar[r, "\delta^0"] & H^0((\Ker f)^\land) \ar[r] & H^0(A^\land) \ar[r] & H^0(A'^\land).
\end{tikzcd}
\]

Since $f$ is an $n$-monomorphism, $H^i(f^\land)$ is an isomorphism for $i \leq -n$ and a monomorphism for $i = -n+1$, which implies $H^i((\Ker f)^\land) = 0$ for all $i \leq -n+1$. The converse follows by retracing the same argument in reverse.
\end{proof}

\begin{proposition}\label{lem: kernel morph is mono}
Let $\MA$ be an $n$-truncated DG-category, and let $(k_f, f, h_f)$ be a left exact $3$-term homotopy complex. Then, $k_f$ is an $n$-monomorphism.
\[
\begin{tikzcd}
\Ker f \ar[r, "k_f", swap] \ar[rr, "0", bend left=45, "{}" name=zero] & |[alias=Y]| A \ar[r, "f", swap] & A' \quad
\ar[Rightarrow, from=Y, to=zero, "h_f", shorten >=1mm]
\end{tikzcd}
\]
\end{proposition}
\begin{proof}
Since $\MA$ is an $n$-truncated DG-category, $H^i(A'^\land) = 0$ for all $i \leq -n+1$. From the long exact sequence in Proposition \ref{prop: long exact induced by left exact}, it follows that $k_f$ is an $n$-monomorphism.
\end{proof}

\subsection{Abelian $n$-truncated DG-categories and their homotopy categories}

\begin{definition}\label{def: abel def}
Let $\MA$ be an additive $n$-truncated DG-category. Then, $\MA$ is called an \it{abelian $n$-truncated DG-category} if it satisfies the following:
\begin{itemize}
    \item [(i)] $\MA$ is a preabelian $n$-truncated DG-category.
    \item [(ii)] For any right exact $3$-term homotopy complex:
    \[
    \begin{tikzcd}
    X \ar[r, "f", swap] \ar[rr, "0", bend left=45, "{}" name=zero]
    & |[alias=Y]| Y \ar[r, "g", swap] & Z
    \ar[Rightarrow, from=Y, to=zero, "h", shorten >=1mm]
    \end{tikzcd}
    \]
    if $f$ is an $n$-monomorphism, then this $3$-term homotopy complex is a homotopy short exact sequence.
    \item [(iii)] The dual of (ii).
\end{itemize}
\end{definition}

Fix an abelian $n$-truncated DG-category $\MA$. Since abelian $n$-truncated DG-categories are self-dual, dual statements can be derived by applying the same arguments to the opposite DG-category.

\begin{proposition}\label{prop: mono push}
$n$-monomorphisms are stable under homotopy pushouts. Specifically, in the following homotopy cocartesian diagram where $f$ is an $n$-monomorphism, $f'$ is also an $n$-monomorphism:
\begin{equation}\label{dia: square}
\begin{tikzcd}
X \ar[r, "f"] \ar[d, "a", swap] & |[alias=Z]| Y \ar[d, "b"] \\
|[alias=A]| X' \ar[r, "f'", swap] & Y'
\ar[Rightarrow, "s", from=Z, to=A, shorten <=2mm, shorten >=2mm]
\end{tikzcd} 
\end{equation}

\end{proposition}
\begin{proof}
Let $Z$ be an object of $\MA$ such that $Z \cong Y \oplus X'$ in $H^0(\MA)$. Then, we have the following closed morphisms of degree $0$:
\[
\begin{tikzcd}
Y \ar[r, transform canvas={yshift=3pt}, "i_Y"] & Z \ar[l, transform canvas={yshift=-3pt}, "p_Y"] \ar[r, transform canvas={yshift=3pt}, "p_{X'}"] & X' \ar[l, transform canvas={yshift=-3pt}, "i_{X'}"]
\end{tikzcd}
\]
and morphisms $u_Y \colon Y \to Y$, $u_{X'} \colon X' \to X'$, and $v \colon Z \to Z$ of degree $-1$ satisfying:
\begin{itemize}
    \item $d(u_Y) = p_Y \circ i_Y - \id_Y$,
    \item $d(u_{X'}) = p_{X'} \circ i_{X'} - \id_{X'}$,
    \item $d(v) = i_Y \circ p_Y + i_{X'} \circ p_{X'} - \id_Z$.
\end{itemize}

Using these, we define the following $3$-term homotopy complex $((f, a)^t, (-b, f'), s')$:
\begin{itemize}
    \item $(f, a)^t := i_Y \circ f + i_{X'} \circ a$,
    \item $(-b, f') := -b \circ p_Y + f' \circ p_{X'}$,
    \item $s' := s - b' \circ u_Y \circ f + f' \circ u_{X'} \circ a$.
\end{itemize}
$$
\begin{tikzcd}
X \ar[r, "(f{,}a)^t", swap]
\ar[rr, "0", bend left=45, "{}" name=zero] & |[alias=Y]| Z \ar[r, "(-b{,}f')", swap] & Y'
\ar[Rightarrow, from=Y, to=zero, "s'", shorten >=1mm]
\end{tikzcd}
$$

Here, $(f, a)^t$ and $(-b, f')$ agree with the morphisms $[f, a]^t$ and $[-b, f']$ induced by the direct sum in $H^0(\MA)$.

Since \eqref{dia: square} is homotopy cocartesian, the $3$-term homotopy complex is right exact. Furthermore, because $f$ is an $n$-monomorphism, $(f, a)^t$ is also an $n$-monomorphism by Remark \ref{rem: n-mono}(ii). By the definition of an abelian $n$-truncated DG-category, this $3$-term homotopy complex is a homotopy short exact sequence.

From Proposition \ref{prop: pullback of seq}, the homotopy kernels of $f$ and $f'$ are homotopy equivalent. By Proposition \ref{proposition: homotopy kernel of n-mono}, $f'$ is also an $n$-monomorphism.
\end{proof}

\begin{proposition}\label{prop: the homotopy category of abelian}
$\MA$ becomes an exact DG-category by considering every homotopy short exact sequence as a conflation. That is, $\MA$ forms an exact DG-category where inflations are $n$-monomorphisms and deflations are $n$-epimorphisms.
\end{proposition}
\begin{proof}
We verify the conditions for an exact DG-category (cf.\ Definition \ref{definition: exact dg}).

(Ex0): This follows immediately from Lemma \ref{lem: splits are homotopy short exact}.

(Ex1): It is clear that $n$-monomorphisms are closed under composition. The same holds for $n$-epimorphisms.

(Ex2)': By Lemma \ref{lem: characterization of finite completeness}, $\MA$ admits homotopy pullbacks. To show that $n$-monomorphisms are preserved under homotopy pushouts, it suffices to invoke Proposition \ref{prop: mono push}.

(Ex2): This is the dual of (Ex2)', and hence holds.
\end{proof}

From the above result and Theorem \ref{thm: enhance of et}, we obtain the following corollary.

\begin{cor}
$H^0(\MA)$ naturally admits the structure of an extriangulated category.
\end{cor}

Any conflation $A \xrightarrow{f} B \xrightarrow{g} C$ in $H^0(\MA)$ naturally extends to both a left triangle $\Omega C \to A \xrightarrow{f} B \xrightarrow{g} C$ and a right triangle $A \xrightarrow{f} B \xrightarrow{g} C \to \Sigma A$. Combining this with the natural long exact sequences in extriangulated categories, we obtain the following:

\begin{cor}\label{cor: long exact indused abelian}
Let $\MC = H^0(\MA)$, and let $(\MC, \mathbb{E}, \mathfrak{s})$ denote the natural extriangulated structure on $\MC$. For a conflation 
\begin{tikzcd}[column sep = 15]
{A}\ar[r,"f"] & {B}\ar[r,"g"] & {C}\ar[r,dashed,"\delta"] & {}
\end{tikzcd}
in this extriangulated category, the following long exact sequences arise:
\[
\begin{tikzcd}[row sep=0.3cm]
0 \ar[r] & \MC(-,\Omega^{n-1} A) \ar[r, "\Omega^{n-1} f \circ -"] & \MC(-,\Omega^{n-1} B) \ar[r, "\Omega^{n-1} g \circ -"] & \MC(-,\Omega^{n-1} C)
\ar[r] & \cdots \\
{} \ar[r] & \MC(-,\Omega A) \ar[r, "\Omega f \circ -"] & \MC(-,\Omega B) \ar[r, "\Omega g \circ -"] & \MC(-,\Omega C)
\ar[r] & {} \\
{} \ar[r] & \MC(-,A) \ar[r, "f \circ -"] & \MC(-,B) \ar[r, "g \circ -"] & \MC(-,C)
\ar[r, "\delta \cdot -"] & {} \\
{} \ar[r] & \mathbb{E}(-,A) \ar[r] & \mathbb{E}(-,B) \ar[r] & \mathbb{E}(-,C) \ar[r] & \cdots \\
\cdots \ar[r] & \mathbb{E}^m(-,A) \ar[r] & \mathbb{E}^m(-,B) \ar[r] & \mathbb{E}^m(-,C) \ar[r] & \cdots
\end{tikzcd}
\]

\[
\begin{tikzcd}[row sep=0.3cm]
0 \ar[r] & \MC(\Sigma^{n-1} C,-) \ar[r, "- \circ \Sigma^{n-1} g"] & \MC(\Sigma^{n-1} B,-) \ar[r, "- \circ \Sigma^{n-1} f"] & \MC(\Sigma^{n-1} A,-)
\ar[r] & \cdots \\
\cdots \ar[r] & \MC(\Sigma C,-) \ar[r, "- \circ \Sigma g"] & \MC(\Sigma B,-) \ar[r, "- \circ \Sigma f"] & \MC(\Sigma A,-)
\ar[r] & {} \\
{} \ar[r] & \MC(C,-) \ar[r, "- \circ g"] & \MC(B,-) \ar[r, "- \circ f"] & \MC(A,-)
\ar[r, "- \cdot \delta"] & {} \\
{} \ar[r] & \mathbb{E}(C,-) \ar[r] & \mathbb{E}(B,-) \ar[r] & \mathbb{E}(A,-) \ar[r] & \cdots \\
\cdots \ar[r] & \mathbb{E}^m(C,-) \ar[r] & \mathbb{E}^m(B,-) \ar[r] & \mathbb{E}^m(A,-) \ar[r] & \cdots
\end{tikzcd}
\]
\end{cor}
\begin{proof}
Since $\MA$ is an additive $n$-truncated DG-category, we have $\Omega^n = 0$ and $\Sigma^n = 0$. Considering that $H^0(\MA)$ is both an extriangulated category and a pretriangulated category, the claim follows from Proposition \ref{prop: long exact induced by left triangle} and Proposition \ref{prop: long exact induced by conflation}.
\end{proof}

\begin{cor}
Let $\MC = H^0(\MA)$. Then $(\MC, \mathbb{E}, \mathfrak{s})$ naturally possesses a negative extension $\mathbb{E}^{-1} = \MC(-, \Omega *) \cong \MC(\Sigma -, *)$ in the sense of \cite[Definition 2.3]{adachi2023intervals}.
\end{cor}

\begin{remark}
More generally, any algebraic extriangulated category possesses a negative extension $\mathbb{E}^{-1} := H^{-1}(\MA(-,*))$. In the present case, this negative extension is representable.
\end{remark}

\subsection{The factorization theorem}

In usual abelian categories, every morphism can be written as a composition of an epimorphism and a monomorphism. A similar result holds in abelian $n$-truncated DG-categories, which we now describe. Fix an abelian $n$-truncated DG-category $\MA$.

\begin{definition}
Let $f \colon A \to A'$ be a closed morphism of degree $0$. A diagram $(e, m, \eta)$ of the form
\[
\begin{tikzcd}
& |[alias=imf]| X \ar[rd, "m"] & \\
A \ar[ru, "e"] \ar[rr, "f" name=f, swap] & & A'
\ar[Rightarrow, "\eta", from=imf, to=f, shorten <=2mm, shorten >=2mm]
\end{tikzcd}
\]
is called a \it{factoriaztion} of $f$, where $|e| = |m| = 0$ and $|\eta| = -1$. If $m$ is an $n$-monomorphism and $e$ is a $1$-epimorphism, $(e, m, \eta)$ is called a \it{$[1,n]$-factorization} of $f$. Similarly, if $m$ is a $1$-monomorphism and $e$ is an $n$-epimorphism, $(e, m, \eta)$ is called a \it{$[n,1]$-factorization}.
\end{definition}

\begin{definition}
Let $(e, m, \eta)$ and $(e', m', \eta')$ be two factorizations of $f$. A $4$-tuple of morphisms $(t, h_e, h_m, \zeta)$ is called a \it{morphism of factorizations} from $(e', m', \eta')$ to $(e, m, \eta)$ if the following holds:
\[
\begin{tikzcd}
& |[alias=X_0]| X_0 \ar[rd, "m"] & \\
A \ar[ru, "e" name=m_0] \ar[rd, "e'", swap] & & A' \\
& |[alias=X_1]| X_1 \ar[ru, "m'" name=e_1, swap] \ar[uu, "t" description]
\ar[Rightarrow, from=X_1, to=m_0, shorten <=3mm, shorten >=3mm, "h_e" description, swap]
\ar[Rightarrow, from=X_0, to=e_1, shorten <=3mm, shorten >=3mm, "h_m" description, swap]
\end{tikzcd}
\]
Here, $\zeta \colon A \to A'$ is a degree $-2$ morphism satisfying $d(\zeta) = -h_m \circ e' - \eta' + m \circ h_e + \eta$. If $t$ is a homotopy equivalence, $(e, m, \eta)$ and $(e', m', \eta')$ are said to be \it{homotopy equivalent}.
\end{definition}

\begin{theorem}\label{thm: the factorization theorem}
Let $f$ be a closed morphism of degree $0$. Then, a $[1,n]$-factorization of $f$ exists and is unique up to homotopy equivalence.
\end{theorem}

\begin{proposition}\label{prop: 1,n-fac is}
Let $(e_1, m_n, \eta)$ be a $[1,n]$-factorization of $f$. Then $m_n$ is an $n$-monomorphism, and the homotopy cokernel of $m_n$ is homotopy equivalent to the homotopy cokernel of $f$ via a natural morphism.
\[
\begin{tikzcd}
 & |[alias=imf]| X \ar[rd, "m_n"] & \\
A \ar[ru, "e_1"] \ar[rr, "f" name=f, swap] & & A'
\ar[Rightarrow, "\eta", from=imf, to=f, shorten <=2mm, shorten >=2mm]
\end{tikzcd}
\]
\end{proposition}

\begin{proof}
The fact that $m_n$ is an $n$-monomorphism follows directly from the definition. We now prove the second claim.

By the octahedral axiom in $\RD(\MA)$, we obtain the following diagram where each row and column forms a distinguished triangle:
\[
\begin{tikzcd}
    \Cone m_n^\lor[-1] \ar[r] \ar[d, equal] & \Cone f^\lor[-1] \ar[r] \ar[d] & \Cone e^\lor[-1] \ar[d] \\
    \Cone m_n^\lor[-1] \ar[r] & {A'}^\lor \ar[r, "m_n^\lor"] \ar[d, "f^\lor"] & X^\lor \ar[d, "e^\lor"] \\
    & A^\lor \ar[r, equal] & A^\lor
\end{tikzcd}
\]
Since $e_1$ is a $1$-epimorphism, we have $H^{\leq 0}(\Cone e^\lor[-1]) = 0$. Using the long exact sequence associated with the top distinguished triangle in the above diagram, the following morphisms are isomorphisms:
\[
H^{\leq 0}(\Cone m_n^\lor[-1]) \to H^{\leq 0}(\Cone f^\lor[-1]).
\]
Thus, the homotopy cokernel of $m_n$ is homotopy equivalent to the homotopy cokernel of $f$.
\end{proof}

In fact, it is possible to construct factorizations satisfying the homotopy cokernel condition stated in the previous proposition.

\begin{lemma}\label{lem: exsistance of factorization}
Let $f$ be a closed morphism of degree $0$. Then, there exists a factorization $(e, m_n, \eta)$ of $f$ such that $m_n$ is an $n$-monomorphism and the homotopy cokernel of $m_n$ is homotopy equivalent to the homotopy cokernel of $f$.
\end{lemma}

\begin{proof}
We can construct the kernel of $f$ and its homotopy cokernel as follows:
\[
\begin{tikzcd}
\quad\,A\,\, \ar[r,"f",swap] \ar[rr,"0",bend left=45,"{}" name=zero]& |[alias=Y]|A' \ar[r,"c(f)",swap]& \Cok f
\ar[Rightarrow,from=Y,to=zero,"h^f",shorten >=1mm]
\end{tikzcd}
\quad
\begin{tikzcd}
\Ker(c(f)) \ar[r,"kc(f)",swap] \ar[rr,"0",bend left=45,"{}" name=zero]& |[alias=Y]|A' \ar[r,"c(f)",swap]& \Cok f\quad
\ar[Rightarrow,from=Y,to=zero,"h_{c(f)}",shorten >=1mm]
\end{tikzcd}
\]
By Proposition \ref{lem: kernel morph is mono}, $kc(f)$ is an $n$-monomorphism. Since $\Ker(c(f))$ is the homotopy kernel of $c(f)$, we can construct the following factorization of $f$:
\[
\begin{tikzcd}
|[alias=K]|\Ker(c(f)) \ar[r,"kc(f)"] \ar[rr,"0" name=zero,bend left=45,]& |[alias=Y]|A' \ar[r,"c(f)"]& \,\,\Cok f\quad \\
A \ar[u,"e",dashed] \ar[ru,"f" name=f,swap,bend right=15] \ar[rru,bend right=30,"0" name=zero',swap]& &
\ar[Rightarrow,to=zero,from=Y,"h_{c(f)}",shorten >=1mm]
\ar[Rightarrow,to=zero',from=Y,"h^f"swap,shorten <=1mm, shorten >=2mm]
\ar[Rightarrow,from=K,to=f,"\eta",shorten >=2mm, shorten <=2mm]
\end{tikzcd}
\]
By setting $m_n := kc(f)$, the factorization $(e, m_n, \eta)$ satisfies the desired properties.
\end{proof}

\noindent
Next, we aim to prove the converse of Proposition \ref{prop: 1,n-fac is}. As a preliminary step, we establish a weaker version of the converse.

\begin{lemma}\label{lem: e is n-1-epi}
Let $(e, m_n, \eta)$ be a factorization of $f$ such that $m_n$ is an $n$-monomorphism and the homotopy cokernel of $m_n$ is homotopy equivalent to the homotopy cokernel of $f$ via a natural morphism. Then $e$ is a $2$-epimorphism.
\[
\begin{tikzcd}
 & |[alias=imf]| X \ar[rd,"m_n"] & \\
A \ar[ru,"e"] \ar[rr,"f" name=f,swap] & & A'
\ar[Rightarrow,"\eta",from=imf,to=f,shorten <=2mm,shorten >=2mm]
\end{tikzcd}
\]
\end{lemma}

\begin{proof}
In $\RD(\MA^\op)$, we obtain the following diagram:
\[
\begin{tikzcd}
    \Cone m_n^\lor[-1] \ar[r] \ar[d,equal] & \Cone f^\lor[-1] \ar[r] \ar[d] & \Cone e^\lor[-1] \ar[d] \\
    \Cone m_n^\lor[-1] \ar[r] & {A'}^\lor \ar[r,"m_n^\lor"] \ar[d,"f^\lor"] & X^\lor \ar[d,"e^\lor"] \\
    & A^\lor \ar[r,equal] & A^\lor
\end{tikzcd}
\]
Here, the top row is a distinguished triangle. Consequently, we obtain the following long exact sequence in $\Mod H^0(\MA^\op)$:
\[
\begin{tikzcd}[row sep=0.3cm]
\cdots \ar[r,"\delta^i"] & H^i(\Cone m_n^\lor[-1]) \ar[r] & H^i(\Cone f^\lor[-1]) \ar[r] & H^i(\Cone e^\lor[-1]) \\
\cdots \ar[r,"\delta^{-1}"] & H^{-1}(\Cone m_n^\lor[-1]) \ar[r] & H^{-1}(\Cone f^\lor[-1]) \ar[r] & H^{-1}(\Cone e^\lor[-1]) \\
\ar[r,"\delta^{0}"] & H^{0}(\Cone m_n^\lor[-1]) \ar[r] & H^{0}(\Cone f^\lor[-1]) \ar[r] & H^{0}(\Cone e^\lor[-1]) 
\end{tikzcd}
\]

By the assumption, the morphism $H^i(\Cone m_n^\lor[-1]) \to H^i(\Cone f^\lor[-1])$ is an isomorphism for all $i \leq 0$. Hence, $H^i(\Cone e^\lor[-1]) = 0$ for all $i \leq -1$. By the dual of Proposition \ref{proposition: homotopy kernel of n-mono}, $e$ is a $2$-epimorphism.
\end{proof}

\begin{lemma}\label{lem: exists morphism of factorizations}
Let $(e,m_n,\eta)$ be a factorization of $f$ such that the homotopy cokernel of $m_n$ is homotopy equivalent to the homotopy cokernel of $f$ via a natural morphism, and let $(e',m'_n,\eta')$ be a factorization of $f$ where $m'_n$ is an $n$-monomorphism. Then, the following hold:
\begin{itemize}
    \item [(i)] There exists a morphism of factorizations from $(e,m_n,\eta)$ to $(e',m'_n,\eta')$.
    \item [(ii)] Let $(t,h_e,h_m,\zeta)$ and $(t',h'_e,h'_m,\zeta')$ be morphisms of factorizations from $(e,m_n,\eta)$ to $(e',m'_n,\eta')$. Then, $t$ and $t'$ are homotopic.
\end{itemize}
\end{lemma}

\begin{proof}
(i): By the dual of Lemma \ref{lem:mor extend to homotopy kernel}, we obtain the following diagram:
$$
\begin{tikzcd}
\,\,\,A \ar[r,"f"] \ar[d,"e'"] \ar[rr,bend left=45,"0" name=zero]
&|[alias=A']| A' \ar[r,"c(f)"] \ar[d,equal]
&|[alias=cokf]| \Cok f \ar[d,dashed,"g"] \\
|[alias=X']|\,\,\,X'\,\,\, \ar[r,"m'_n",swap] \ar[rr,bend right=45,"0" name=zero',swap]
&|[alias=A'']| A' \ar[r,"c(m'_n)",swap]
& \Cok m'_n
\ar[Rightarrow,to=X',from=A',"-\eta'",swap,shorten >=2mm,shorten <=2mm]
\ar[Rightarrow,to=A'',from=cokf,"\eta''",swap,shorten >=2mm,shorten <=2mm]
\ar[Rightarrow,to=zero,from=A',"h^f",swap,shorten >=1mm,shorten <=1mm]
\ar[Rightarrow,to=zero',from=A'',"h^{m'_n}",swap,shorten >=1mm,shorten <=1mm]
\end{tikzcd}
$$
Here, the top row is right exact and the bottom row is a homotopy short exact sequence. Note that there exists a morphism of 3-term homotopy complexes $(e',\id_{A'},g,-\eta',\eta'',\widetilde{\eta'})$, satisfying:
$$
d(\widetilde{\eta'}) = g \circ h^f - \eta'' \circ f + c(m'_n) \circ \eta' - h^{m'_n} \circ e'.
$$
Focusing on the right-hand square of the diagram, we obtain the following diagram and a morphism of 3-term homotopy complexes $(t,\id_{A'},g,-h_m,\eta'',-\widetilde{\eta''})$:
\begin{equation}\label{dia: oo}
\begin{tikzcd}
\,\,\,\,X\,\,\,\, \ar[r,"m_n"] \ar[d,"t",dashed] \ar[rr,bend left=45,"0" name=zero]
&|[alias=A']| A' \ar[r,"c(f)"] \ar[d,equal]
&|[alias=cokf]| \Cok f \ar[d,"g"] \\
|[alias=X']|\,\,\,\,X'\,\,\,\, \ar[r,"m'_n",swap] \ar[rr,bend right=45,"0" name=zero',swap]
&|[alias=A'']| A' \ar[r,"c(m'_n)",swap]
& \Cok m'_n
\ar[Rightarrow,to=X',from=A',"-h_m",swap,shorten >=2mm,shorten <=2mm]
\ar[Rightarrow,to=A'',from=cokf,"\eta''",swap,shorten >=2mm,shorten <=2mm]
\ar[Rightarrow,to=zero,from=A',"h_{c(f)}",swap,shorten >=1mm,shorten <=1mm]
\ar[Rightarrow,to=zero',from=A'',"h^{m'_n}",swap,shorten >=1mm,shorten <=1mm]
\end{tikzcd}
\end{equation}
Here, since the homotopy cokernel of $m_n$ is homotopy equivalent to that of $f$, we identify them. Additionally, note that the following holds:
$$
d(\widetilde{\eta''}) = h^{m'_n} \circ t - c(m'_n) \circ h_m + \eta'' \circ m_n - g \circ h_{c(f)}.
$$
By the universality of the homotopy kernel of $c(f)$, we obtain the following diagram and a morphism $p \colon A \to \Cok f$ of  degree $-2$, satisfying:
$
d(p) = h_{c(f)} \circ e - c(f) \circ \eta - h^f
$.
$$
\begin{tikzcd}
|[alias=K]|\,\,\,\,X\,\,\,\, \ar[r,"m_n"] \ar[rr,"0" name=zero,bend left =45,]
&|[alias=Y]| A' \ar[r,"c(f)"]
& \,\,\Cok f\\
\,\,\,\,A\,\,\,\, \ar[u,"e"] \ar[ru,"f" name=f,swap,bend right=15]
\ar[rru,bend right=30,"0" name=zero',swap] & &
\ar[Rightarrow,to=zero,from=Y,"h_{c(f)}",shorten >=1mm]
\ar[Rightarrow,to=zero',from=Y,"h^f"swap,shorten <=1mm, shorten >=2mm]
\ar[Rightarrow,from=K,to=f,"\eta",shorten >=2mm, shorten <=2mm]
\end{tikzcd}
$$
In this case, in the following diagram, $e$ has a filler $(\eta',-\widetilde{\eta'})$:
$$
\begin{tikzcd}
 &|[alias=A]| A \ar[ld,"e'",bend right=20]\ar[rr,"0"name=zero]\ar[rd,"f",swap]
 & &|[alias=cokf]| \Cok f\ar[rd,"g"] & \\
X' \ar[rr,"m'_n"{name=mn},swap]& &|[alias=A']| A' \ar[ru,"c(f)"]\ar[rr,"c(m'_n)"name=cmn',swap] & &\Cok m'_n
\ar[Rightarrow,"h^f", from=A',to=zero,shorten <=1mm,shorten >=1mm]
\ar[Rightarrow,"\eta''",from=cokf,to=cmn',shorten <=1mm,shorten >=1mm]
\end{tikzcd}
$$
In fact, $d(\eta') = f - m'_n \circ e'$ and $d(-\widetilde{\eta'}) = -g \circ h^f + \eta'' \circ f - c(m'_n) \circ \eta' + h^{m'_n} \circ e'$ hold. Similarly, $t \circ e$ also has a filler $(\eta + h_m \circ e, -\eta'' \circ \eta + g \circ p + \widetilde{\eta''} \circ e)$ in the following diagram:
$$
\begin{tikzcd}
 &|[alias=A]| A \ar[ld,"t \circ e",bend right=20]\ar[rr,"0"name=zero]\ar[rd,"f",swap]
 & &|[alias=cokf]| \Cok f \ar[rd,"g"] & \\
X' \ar[rr,"m'_n"name=mn,swap]& &|[alias=A']| A' \ar[ru,"c(f)"]\ar[rr,"c(m'_n)"name=cmn',swap] & &\Cok m'_n
\ar[Rightarrow,"h^f", from=A',to=zero,shorten <=1mm,shorten >=1mm]
\ar[Rightarrow,"\eta''",from=cokf,to=cmn',shorten <=1mm,shorten >=1mm]
\end{tikzcd}
$$
Thus, $e' - t \circ e$ has a filler in the following diagram:
$$
\begin{tikzcd}
\,\,\quad X'\,\,\, \ar[r,"m'_n"] \ar[rr,"0" name=zero,bend left =45,]
&|[alias=Y]| A' \ar[r,"c(m'_n)"] & \Cok m'_n\\
A \ar[ru,"0"swap]\ar[u,"e' - t \circ e"] \ar[rru,bend right=30,"0" name=zero',swap]& &
\ar[Rightarrow,from=Y,to=zero,"h^{m'_n}",shorten >=1mm]
\ar[Rightarrow,from=Y,to=zero',"0"swap,shorten <=1mm, shorten >=2mm]
\end{tikzcd}
$$
By Proposition~\ref{prop: torikae}, there exist a morphism $h_e \colon A \to X'$ of degree $-1$ and a morphism $\zeta \colon A \to A'$ of degree $-2$ such that $d(h_e) = e' - t \circ e$ and $d(\zeta) = \eta' - \eta - h_m \circ e + m'_n \circ h_e$. Therefore, $(t,h_e,h_m,\zeta)$ is a morphism of factorizations.

(ii): Let $(t',h'_e,h'_m,\zeta')$ be a morphism of factorizations from $(e,m_n,\eta)$ to $(e',m'_n,\eta')$. We need to show that $t'$ is homotopic to $t$ defined in the proof of (i). We obtain the following diagram. This diagram is obtained by replacing $t$ with $t'$ and $h_m$ with $h'_m$ in diagram \eqref{dia: oo}.
$$
\begin{tikzcd}
\,\,\,\,X\,\,\,\,\ar[r,"m_n"]\ar[d,"t'"]\ar[rr,bend left=45,"0" name=zero]
&|[alias=A']|A'\ar[r,"c(f)"]\ar[d,equal]
&|[alias=cokf]|\Cok f\ar[d,"g"]
\\
|[alias=X']|\,\,\,\,X'\,\,\,\,\ar[r,"m'_n"]\ar[rr,bend right=45,"0" name=zero',swap]
&|[alias=A'']|A'\ar[r,"c(m'_n)"]
&\Cok m'_n
\ar[Rightarrow,to=X',from=A',"-h'_m",swap,shorten >=2mm,shorten <=2mm]
\ar[Rightarrow,to=A'',from=cokf,"\eta''",swap,shorten >=2mm,shorten <=2mm]
\ar[Rightarrow,to=zero,from=A',"h_{c(f)}",swap,shorten >=1mm,shorten <=1mm]
\ar[Rightarrow,to=zero',from=A'',"h^{m'_n}",swap,shorten >=1mm,shorten <=1mm]
\end{tikzcd}
$$
We aim to show the existence of a morphism $\widetilde{\eta'''}$ of degree $-2$ such that:
$$
d(\widetilde{\eta'''}) = h^{m'_n} \circ t' - c(m'_n) \circ h'_m + \eta'' \circ m_n - g \circ h_{c(f)}.
$$
If this holds, then by the universality of the homotopy kernel, $t'$ is homotopic to $t$.
Here, the morphism $h^{m'_n} \circ t' - c(m'_n) \circ h'_m + \eta'' \circ m_n - g \circ h_{c(f)}$ is a closed morphism of degree $-1$. Indeed, we can calculate as follows:
\begin{align*}
d(h^{m'_n} \circ t' &- c(m'_n) \circ h'_m + \eta'' \circ m_n - g \circ h_{c(f)}) \\
=&\, d(h^{m'_n}) \circ t' - c(m'_n) \circ d(h'_m) + d(\eta'') \circ m_n - g \circ d(h_{c(f)}) \\
=&\,- c(m'_n) \circ m'_n \circ t' - c(m'_n) \circ (m_n - m'_n \circ t') + (c(m'_n) - g \circ c(f)) \circ m_n + g \circ c(f) \circ m_n \\
=&\, 0.
\end{align*}
Thus, by Lemma \ref{lem: e is n-1-epi}, it suffices to find a morphism $\xi \colon A \to \Cok m'_n$ of degree $-2$ such that:
$$
d(\xi) = (h^{m'_n} \circ t' - c(m'_n) \circ h'_m + \eta'' \circ m_n - g \circ h_{c(f)}) \circ e.
$$
Now, define $\xi$ as follows:
$$
\xi := -g \circ p - \widetilde{\eta'} + \eta'' \circ \eta + c(m'_n) \circ \zeta' + h^{m'_n} \circ h'_e.
$$
We verify that this $\xi$ satisfies the required condition:
\begin{align*}
d(-g \circ p &- \widetilde{\eta'} + \eta'' \circ \eta + c(m'_n) \circ \zeta' + h^{m'_n} \circ h'_e) \\
=& -g \circ d(p) - d(\widetilde{\eta'}) + d(\eta'') \circ \eta - \eta'' \circ d(\eta) + c(m'_n) \circ d(\zeta') + d(h^{m'_n}) \circ h'_e - h^{m'_n} \circ d(h'_e) \\
=& -g \circ (h_{c(f)} \circ e - c(f) \circ \eta - h^f) - (g \circ h^f - \eta'' \circ f + c(m'_n) \circ \eta' - h^{m'_n} \circ e') \\
 & + (c(m'_n) - g \circ c(f)) \circ \eta - \eta'' \circ (f - m \circ e) + c(m'_n) \circ (\eta' - \eta - h'_m \circ e + m'_n \circ h'_e) \\
 & - c(m'_n) \circ m'_n \circ h'_e - h^{m'_n} \circ (e' - t' \circ e) \\
=& -g \circ h_{c(f)} \circ e + \eta'' \circ m \circ e - c(m'_n) \circ h'_m \circ e + h^{m'_n} \circ t' \circ e.
\end{align*}
Thus, the proof is complete.
\end{proof}

\begin{proposition}\label{lem: left of factorization is epi}
Let $(e,m_n,\eta)$ be a factorization of $f \colon A \to A'$ such that $m_n$ is an $n$-monomorphism, and the homotopy cokernel of $m_n$ is homotopy equivalent to the homotopy cokernel of $f$ via a natural morphism. Then, $e$ is a 1-epimorphism.
\end{proposition}

\begin{proof}
By Lemma \ref{lem: e is n-1-epi}, it suffices to show that $H^0(e^\lor)$ is injective in $H^0(\MA^\op)$. Take a closed morphism $x \colon X \to Y$ of degree $0$ and a morphism $h \colon A \to Y$ of degree $-1$ such that $d(h) = -x \circ e$. We will show that $x$ is null-homotopic.

We take the homotopy kernel of $x$ as follows:
$$
\begin{tikzcd}
\Ker(x)\ar[r,"k(x)",swap]\ar[rr,"0",bend left =45,"{}"name=zero]&|[alias=Y]|X\ar[r,"x",swap]&\quad Y\quad
\ar[Rightarrow,from=Y,to=zero,"h_{x}",shorten >=1mm]
\end{tikzcd}
$$
By the universality of the homotopy kernel, we obtain the following diagram and a filler $(\overline{u},\widetilde{u})$ of $u$:
$$
\begin{tikzcd}
|[alias=K]|\Ker x\ar[r,"k(x)"]\ar[rr,"0" name=zero,bend left =45,]&|[alias=Y]|X\ar[r,"x"]&\quad Y\quad \\
A\ar[ru,"e" name=f,swap,bend right=15]\ar[rru,bend right=30,"0" name=zero',swap]\ar[u,"u",dashed]& &
\ar[Rightarrow,to=zero,from=Y,"h_x",shorten >=1mm]
\ar[Rightarrow,to=zero',from=Y,"h"swap,shorten <=1mm, shorten >=2mm,swap]
\end{tikzcd}
$$
Thus, we now have two factorizations of $f$, $(e,m_n,\eta)$ and $(u,m_n \circ k(x),m_n \circ \overline{u} + \eta)$. By assumption, $m_n \circ k(x)$ is an $n$-monomorphism. By Lemma \ref{lem: exists morphism of factorizations}, there exists a morphism of factorizations $(t,h_e,h_m,\zeta)$ from $(e,m_n,\eta)$ to $(u,m_n \circ k(x),m_n \circ \overline{u} + \eta)$, which is unique up to homotopy.

$$
\begin{tikzcd}
&|[alias=X_0]|\Ker x\ar[rd,"m_n \circ k(x)"]& \\
A\ar[ru,"u"name=m_0]\ar[rd,"e",swap]& & A'\\
&|[alias=X_1]|X\ar[ru,"m_n"name=e_1,swap]\ar[uu,"t"description]
\ar[Rightarrow,from=X_1,to=m_0,shorten <=3mm,shorten >=3mm,"h_e"description,swap]
\ar[Rightarrow,from=X_0,to=e_1,shorten <=3mm,shorten >=3mm,"h_m"description,swap]
\end{tikzcd}
$$

Therefore, $(k(x) \circ t, k(x) \circ h_e + \overline{u}, h_m, \zeta)$ defines a morphism from $(e,m_n,\eta)$ to itself. By Lemma \ref{lem: exists morphism of factorizations} (ii), $k(x) \circ t - \id_X$ is null-homotopic. Since $k(x)$ is both an inflation and a retraction in $H^0(\MA)$, it is an isomorphism in $H^0(\MA)$. Thus, $x = 0$ in $H^0(\MA)$. This completes the proof.
\end{proof}

\begin{cor}\label{cor: uniqueness of factorization}
If $f$ admits two $[1, n]$-factorizations, then there exists a unique morphism between them, up to homotopy equivalence and it is a homotopy equivalence.
\end{cor}
\begin{proof}
Let $(e,m,\eta)$ and $(e',m',\eta')$ be two $[1,n]$-factorizations of $f$. By Proposition \ref{prop: 1,n-fac is}, the homotopy cokernels of $m$ and $m'$ are homotopy equivalent to the homotopy cokernel of $f$. Hence, by Lemma \ref{lem: exists morphism of factorizations}, there exists a morphism from $(e,m,\eta)$ to $(e',m',\eta')$. Furthermore, by Lemma \ref{lem: exists morphism of factorizations}, this morphism is uniquely determined up to homotopy. 

By applying the same argument in the reverse direction, we can confirm that this morphism is a homotopy equivalence.
\end{proof}

Now, we are ready to prove Theorem \ref{thm: the factorization theorem}.

\begin{proof}[Proof for Theorem \ref{thm: the factorization theorem}]
By Lemma \ref{lem: exsistance of factorization}, there exists a factorization $(e, m_n, \eta)$ of $f$ such that $m_n$ is an $n$-monomorphism and its homotopy cokernel is naturally homotopy equivalent to the homotopy cokernel of $f$. Thus, by Proposition \ref{lem: left of factorization is epi}, $e$ is a $1$-epimorphism. The existence of a $[1, n]$-factorization follows, and its uniqueness is ensured by Corollary \ref{cor: uniqueness of factorization}.
\end{proof}

As the dual statement of Theorem \ref{thm: the factorization theorem}, the following also holds.

\begin{theorem}
Let $f$ be a degree-$0$ closed morphism. Then, the $[n, 1]$-factorization of $f$ exists and is unique up to homotopy equivalence.
\end{theorem}

In the language of the homotopy category, this theorem can be stated as follows:
\begin{cor}
For any morphism $f$ in the extriangulated category $H^0(\MA)$, there exists a unique factorization $f = m_n \circ e_1$, up to isomorphism, such that:
\begin{itemize}
    \item [(i)] $\Sigma^n(m_n)$ is a monomorphism in $H^0(\MA)$.
    \item [(ii)] $e_1$ is an epimorphism in $H^0(\MA)$, and $\Omega^{\geq 1}(e_1)$ is an isomorphism in $H^0(\MA)$.
\end{itemize}
\end{cor}

\begin{cor}
For any closed morphism $f$ of degree $0$, $f$ is an $n$-monomorphism (resp.\ a $1$-epimorphism) if and only if, in its $[1, n]$-factorization $(e_1, m_n, \eta)$, $e_1$ (resp.\ $m_n$) is a homotopy equivalence.
\end{cor}

\begin{proof}
This follows immediately from the uniqueness of the $[1, n]$-factorization.
\end{proof}

\subsection{Abelian $n$-truncated DG-categories induced by $t$-structures}

In this section, we introduce a method to construct abelian $n$-truncated DG-categories using $t$-structures. Throughout this section, $\MD_\dg$ is assumed to be a non-positive DG-category.

\begin{definition}[{\cite[Definition 6.1]{chen2024exact1}}]
Let $\MD_\dg$ be a preabelian DG-category. We say that $\MD_\dg$ is a \it{stable DG-category} if, for any $3$-term homotopy complex $(f, g, h)$ in $\MD_\dg$, the following are equivalent:
\begin{itemize}
    \item[(i)] $(f, g, h)$ forms a homotopy short exact sequence.
    \item[(ii)] $(f, g, h)$ is left exact.
    \item[(iii)] $(f, g, h)$ is right exact.
\end{itemize}
\end{definition}

\begin{remark}
This stable DG-category can be regarded as an abelian $(\infty,1)$-category because any closed morphism of degree $0$ is both an $\infty$-monomorphism and an $\infty$-epimorphism (cf.\ Definition \ref{def: n,1-dg}).
\end{remark}

\begin{example}[{\cite[Example 6.2]{chen2024exact1}}]
For a pretriangulated DG-category $\MA$, the category $\tauleq{0}\MA$ is a stable DG-category. Therefore, algebraic triangulated categories are always obtained from stable DG-categories.
\end{example}

\begin{remark}\label{rem: pretriang is triang}
IF $\MD_\dg$ is a stable DG-category,  the triangulated structure on the homotopy category $\MD$ coincides with the pretriangulated structure obtained in Theorem \ref{thm: pretriang}. Specifically, since left exactness and right exactness are equivalent, we have $\Delta = \nabla$, and it is straightforward to confirm that the loop functor and suspension functor are equivalence functors.
\end{remark}

The following results are known about stable DG-categories:

\begin{proposition}[{\cite[Proposition 6.4]{chen2024exact1}}]
Let $\MD_\dg$ be a stable DG-category. Then, by taking all homotopy short exact sequences as conflations, $\MD_\dg$ becomes an exact DG-category.
\end{proposition}

\begin{proposition}[{\cite[Theorem 6.5]{chen2024exact1}}]
Let $\MA$ be an exact DG-category. The extriangulated structure on the homotopy category $H^0(\MA)$ defines triangulated structure if and only if $\MA$ is an exact DG-category arising from a stable DG-category.
\end{proposition}

Hereafter, let $\MD$ be a stable DG-category and $\MD$ will always denote the homotopy category of a DG-category $\MD_\dg$.

\begin{definition}
A pair of full sub-DG-categories $(\MD^{\leq 0}_\dg, \MD^{\geq 0}_\dg)$ of $\MD_\dg$ is called a \it{$t$-structure} on $\MD_\dg$ if it forms a $t$-structure in the homotopy category $\MD$.
\end{definition}

\begin{remark}
The heart of the $t$-structure on $\MD$ is denoted by $\MD^0 := \MD^{\leq 0} \cap \MD^{\geq 0} \subset \MD$. The standard cohomological functor associated with this $t$-structure is written as $H^0_{\RD} \colon \MD \to \MD^0$.
\end{remark}

\begin{remark}
Let $(\MD^{\leq 0}_\dg, \MD^{\geq 0}_\dg)$ be a $t$-structure on $\MD_\dg$. For any $n \in \mathbb{Z}$, define the following full sub-DG-categories of $\MD_\dg$:
\[
\MD^{\leq n} := \{ X \in \MD_\dg \mid X \text{ belongs to } \MD^{\leq n} \text{ in } \MD \},
\]
\[
\MD^{\geq n} := \{ X \in \MD_\dg \mid X \text{ belongs to } \MD^{\geq n} \text{ in } \MD \}.
\]
\end{remark}

\begin{definition}
For $m \leq m'$, define the following full sub-DG-category of $\MD_\dg$:
\[
\MD_\dg^{[m, m']} := \MD_\dg^{\leq m'} \cap \MD_\dg^{\geq m}.
\]
The full DG-subcategory $\MD_\dg^{[-n+1, 0]}$ is called the \it{$n$-extended heart} of $(\MD^{\leq 0}_\dg, \MD^{\geq 0}_\dg)$.
\end{definition}

\begin{lemma}
For any $i \leq 0$ and $X,Y\in\MD_\dg$, the following holds:
\[
H^i(\MD_\dg(X, Y)) \cong \MD(X, Y[i]) \cong \MD(X[-i], Y).
\]
\end{lemma}

\begin{proof}
This follows immediately from Remark \ref{rem: pretriang is triang}.
\end{proof}

\begin{lemma}\label{lem: cohomology zero and object zero}
Let $(\MD_\dg^{\leq 0}, \MD_\dg^{\geq 0})$ be a $t$-structure on $\MD_\dg$. For any $m, n \in \mathbb{Z}$ and $X \in \MD_\dg^{[m, n]}$, the following are equivalent:
\begin{itemize}
    \item[(i)] $X$ is contractible in $\MD_\dg$, i.e., $X = 0$ in $\MD$.
    \item[(ii)] For all $m \leq i \leq n$, $H^i_\MD(X) = 0$ in $\MD$.
\end{itemize}
\end{lemma}

\begin{proof}
The implication from (i) to (ii) is clear. To show (ii) implies (i), note that for $X \in \MD^{[m, n]}$, we can write:
\[
X \in H^m_\MD(X)[-m] * H^{m+1}_\MD(X)[-m-1] * \cdots * H^n_\MD(X)[-n].
\]
By the assumption, $X = 0$ in $\MD$.
\end{proof}

\begin{lemma}\label{cohomology and representation commutative}
For any $X \in \MD^{[-n+1, 0]}$ and $0 \leq m \leq n-1$, the following are equivalent:
\begin{itemize}
    \item[(i)] $H^{\leq -m}_{\MD}(X) = 0$.
    \item[(ii)] $H^{\leq -m}(\MD_\dg(Z, X)) = 0$ holds for all $Z \in \MD^{[-n+1, 0]}_{\dg}$.
\end{itemize}
\end{lemma}

\begin{proof}
For any $Z \in \MD^{[-n+1, 0]}_{\dg}$ and $0 \leq m \leq l \leq n-1$, there is the following isomorphism:
\begin{align*}
H^{-l}(\MD^{[-n+1, 0]}_\dg(Z, X)) &\cong \MD(Z, X[-l]) \cong \MD(Z, \tauleq{0}(X[-l])).
\end{align*}
Thus, note that $H^{-l}(\MD_\dg(Z, X)) = 0$ for all $Z \in \MD_\dg^{[-n+1, 0]}$ if and only if $\tauleq{0}(X[-l]) = 0$.

(i) $\implies$ (ii): By assumption and Lemma \ref{lem: cohomology zero and object zero}, $\tauleq{0}(X[-l]) = 0$ holds in $\MD$ for all $m \leq l \leq n-1$. Therefore, from the above argument, $H^{-l}(\MD_\dg(Z, X)) = 0$ holds for all $m \leq l \leq n-1$ and $Z \in \MD_\dg^{[-n+1, 0]}$.

(ii) $\implies$ (i): Setting $l = m$ in the initial argument, we obtain that $\tauleq{0}(X[-m]) = 0$, which by Lemma \ref{lem: cohomology zero and object zero} is equivalent to $H^{\leq -m}_{\MD}(X) = 0$.
\end{proof}

\begin{theorem}\label{thm: induced by t-structure}
Let $\MD_\dg$ be a non-positive stable DG-category, and let $(\MD_\dg^{\leq 0}, \MD_\dg^{\geq 0})$ be its $t$-structure. Then, the $n$-extended heart $\MD_\dg^{[-n+1, 0]}$ is an abelian $(n, 1)$-category.
\end{theorem}

To prove this theorem, we examine $n$-monomorphisms, $n$-epimorphisms, homotopy kernels, and homotopy cokernels within the $n$-extended heart.

\begin{proposition}\label{prop: ker of t-str}
For any closed morphism $f \colon X \to Y$ of degree $0$ in $\MD^{[-n+1, 0]}_\dg$, the following diagram defines the homotopy kernel of $f$:
\[
\begin{tikzcd}
\tauleq{0}(\Cone f[-1]) \ar[r, "i"', swap] \ar[rr, "0", bend left=45, ""{name=zero}] & |[alias=Y]| X \ar[r, "f"', swap] & Y \quad\quad
\ar[Rightarrow, from=Y, to=zero, "h_f", shorten >=1mm]
\end{tikzcd}
\]
Here, $\tauleq{0}$ is the truncation functor associated with the $t$-structure on $\MD$.
\end{proposition}

\begin{proof}
For any $Z \in \MD^{[-n+1, 0]}$ and $l \leq 0$, the following isomorphisms hold:
\begin{align*}
H^l(\MD^{[-n+1, 0]}_\dg(Z, \tauleq{0}(\Cone f[-1]))) &\cong \MD(Z[-l], \tauleq{0}(\Cone f[-1])) \\
&\cong \MD(Z[-l], \Cone f[-1]) \\
&\cong H^l(\MD_\dg(Z, \Cone f[-1])) \\
&\cong H^l(\Cone f^\land_Z[-1]).
\end{align*}
The last isomorphism follows because $\Cone f[-1]$ defines the homotopy kernel of $f$ in $\MD_\dg$. Therefore, the above diagram defines the homotopy kernel of $f$.
\end{proof}

By considering the $t$-structure $((\MD^{\geq n-1})^\op, (\MD^{\leq n-1})^\op)$ on $\MD^\op$, this $n$-extended heart corresponds to the opposite DG-category of $\MD^{[-n+1, 0]}_\dg$. From the above discussion, we also obtain the following:

\begin{proposition}
For any closed morphism $f \colon X \to Y$ of degree $0$ in $\MD^{[-n+1, 0]}_\dg$, the following diagram defines the homotopy cokernel of $f$:
\[
\begin{tikzcd}
\quad\quad\quad X \ar[r, "f"', swap] \quad \ar[rr, "0", bend left=45, ""{name=zero}] & |[alias=Y]| Y \ar[r, "p"', swap] & \taugeq{-n+1}\Cone f
\ar[Rightarrow, from=Y, to=zero, "h^f", shorten >=1mm]
\end{tikzcd}
\]
\end{proposition}

\begin{proposition}\label{prop: n-mono in t-structure}
For any closed morphism $f \colon X \to Y$ of degree $0$ in $\MD_\dg^{[-n+1, 0]}$ and $0 \leq m \leq n-1$, the following are equivalent:
\begin{itemize}
    \item [(i)] $f$ is an $m$-monomorphism in $\MD_\dg^{[-n+1, 0]}$.
    \item [(ii)] $H^{-m+1}_{\MD}(f)$ is a monomorphism and $H^{\leq -m}_{\MD}(f)$ is an isomorphism in $\MD^0$.
\end{itemize}
\end{proposition}

\begin{proof}
By Lemma \ref{proposition: homotopy kernel of n-mono}, (i) is equivalent to the condition that for any $Z \in \MD_\dg^{[-n+1, 0]}$,
\[
H^{\leq -m}(\MD_\dg^{[-n+1, 0]}(Z, \tauleq{0}(\Cone f[-1]))) = 0.
\]
Moreover, by Lemma \ref{cohomology and representation commutative}, this is equivalent to $H^{\leq -m}_{\MD}(\tauleq{0}(\Cone f[-1])) = 0$. Therefore, (ii) holds.
\end{proof}

The dual statement is as follows:

\begin{proposition}
For any closed morphism $f \colon X \to Y$ of degree $0$ in $\MD_\dg^{[-n+1, 0]}$ and $0 \leq m \leq n-1$, the following are equivalent:
\begin{itemize}
    \item [(i)] $f$ is an $m$-epimorphism in $\MD_\dg^{[-n+1, 0]}$.
    \item [(ii)] $H^{-n+m}_{\MD}(f)$ is a epimorphism and $H^{\geq -n+m+1}_{\MD}(f)$ is an isomorphism in $\MD^0$.
\end{itemize}
\end{proposition}

\begin{proof}[Proof for Theorem \ref{thm: induced by t-structure}]
By Proposition \ref{prop: ker of t-str} and its dual, it is immediately clear that $\MD_\dg^{[-n+1, 0]}$ is a preabelian DG-category.

Next, consider the other conditions. Let $f \colon X \to Y$ be an $n$-monomorphism. The homotopy cokernel of $f$ is given as follows. We need to show that this right exact sequence is a homotopy short exact sequence in $\MD_\dg^{[-n+1, 0]}$:
\[
\begin{tikzcd}
\quad\quad X \quad \ar[r, "f"', swap] \ar[rr, "0", bend left=45, ""{name=zero}] & |[alias=Y]| Y \ar[r, "p"', swap] & \taugeq{-n+1}\Cone f
\ar[Rightarrow, from=Y, to=zero, "h^f", shorten >=1mm]
\end{tikzcd}
\]
Here, since $f$ is an $n$-monomorphism, it follows from Proposition \ref{prop: n-mono in t-structure} that $H^{-n}_{\MD}(\Cone f) = 0$. Hence, the morphism $\Cone f \to \taugeq{-n+1}\Cone f$ is an isomorphism in $\MD$. This implies that the above diagram forms a homotopy short exact sequence in $\MD_\dg$. Therefore, the sequence is also a short exact sequence in $\MD_\dg^{[-n+1, 0]}$.
\end{proof}

\begin{remark}
From the proof above, it is straightforward to verify the following equivalence for any $3$-term homotopy complex $(f, g, h)$ in $\MD_\dg^{[-n+1, 0]}$:
\begin{itemize}
    \item [(i)] $(f, g, h)$ is a homotopy short exact sequence in $\MD_\dg$.
    \item [(ii)] $(f, g, h)$ is a homotopy short exact sequence in $\MD_\dg^{[-n+1, 0]}$.
\end{itemize}
This implies that the exact DG-category structure obtained by considering $\MD_\dg^{[-n+1, 0]}$ as an extension-closed subcategory of $\MD_\dg$ corresponds to the exact DG-category structure obtained by treating $\MD_\dg^{[-n+1, 0]}$ as an abelian $n$-truncated DG-category.
\end{remark}

Without using the language of DG-categories, theorems so far are stated as follows:

\begin{theorem}\label{thm: h0}
Let $\MD$ be a algebraic triangulated categories and $(\MD^{\leq0},\MD^{\geq 0})$ be a $t$-structure. Then 
the following holds. 
\begin{itemize}
    \item[(i)] $\MD^{[-n+1,0]}$ admits a pretriangulated structure $(\Omega,\Sigma,\Delta,\nabla)$ where:
    \begin{itemize}
        \item $\Omega:=\tauleq{0}\circ[-1]$ and $\Sigma:=\taugeq{-n+1}\circ [1]$
        \item $\Delta$ is the class of triangles isomorphic to the following triangles:
        \[
        \begin{tikzcd}
            \tauleq{0}(Y[-1])\ar[r]&\tauleq{0}(\Cone f[-1]) \ar[r, swap]  & |[alias=Y]| X \ar[r, "f"', swap] & Y 
              \end{tikzcd}
              \]
            where $f\colon X\rightarrow Y$ is a morphism of $\MD^{[-n+1,0]}$.
             \item $\nabla$ is the class of triangles isomorphic to the following triangles:
             $$
             \begin{tikzcd}
              X\ar[r,"f"]&Y\ar[r]&\taugeq{-n+1}\Cone f\ar[r]&\taugeq{-n+1}(X[1])
        \end{tikzcd}$$
        \end{itemize}
    \item [(ii)] $\MD^{[-n+1,0]}$ admits a extriangulated structure suth that:
        \begin{itemize}
               \item Conflations are admissible triangles \begin{tikzcd}[column sep = 15]
{A}\ar[r,"f"] & {B}\ar[r,"g"] & {C}\ar[r,dashed,"\delta"] & {}
\end{tikzcd} in $\MD$ which every terms are in $\MD^{[-n+1,0]}$
                \item For a morphism $f\colon X\rightarrow Y$ in $\MD^{[-n+1,0]}$, The followings are equivalent:
                 \begin{itemize}
                      \item [(1)] $f$ is a inflation.
                        \item [(2)] $H_{\MD}^{-n+1}(f)$ is monomorphic in the heart of fixed $t$-structure.
                         \item [(3)] $\Omega^{n-1}(f)$ is monomorphic in $\MD^{[-n+1,0]}$.
                         \end{itemize}
                          \item For a morphism $f\colon X\rightarrow Y$ in $\MD^{[-n+1,0]}$, The followings are equivalent:
        \begin{itemize}
            \item [(1)] $f$ is a deflation.
            \item [(2)] $H_{\MD}^{0}(f)$ is epimorphic in the heart of fixed $t$-structure.
            \item [(3)] $\Sigma^{n-1}(f)$ is epimorphic in $\MD^{[-n+1,0]}$.
        \end{itemize}
         \end{itemize}
    \item [(iii)] For a morphism $f\colon X\rightarrow Y$ in $\MD^{[-n+1,0]}$, there exists a factorization $f=m_n\circ e_1$ suth that:
    \begin{itemize}
    \item $H^{-n+1}_\MD(m_n)$ is a monomorphism in the heart of fixed $t$-structure, or equivalently, $\Omega^{n-1}(m_n)$ is a monomorphism in $\MD^{[-n+1,0]}$.
        \item $H^{-n+1}_\MD (e_1)$ is a epimorphism and $H^{\geq -n+1}_\MD(e_1)$ is isomorphism in the heart of fixed $t$-structure, or equivalently, $e_1$ is a epimoprhism and $\Sigma^{\geq 1}(e_1)$ is a isomorphism in $\MD^{[-n+1,0]}$.
    \end{itemize}
    and this factorization is unique up to isomorphisms.
    \item [(iii)'] The dual statements of (iii)
    \end{itemize}
\end{theorem}

\section{Comparisons with other concepts}\label{sec :5}

\subsection{Relatively exact $2$-categories}

In this section, we explore the relationship between relatively exact $2$-categories introduced in \cite{nakaoka2008cohomology} and abelian $2$-truncated DG-categories. Specifically, we consider a weaker version of relatively exact $2$-categories, excluding condition $(\mathrm{a3}\text{-}1)$ in \cite{nakaoka2008cohomology}.

For the convenience of readers, we introduce the concept of relatively exact $2$-categories.

A symmetric categorical group serves as the $2$-dimensional analogue of an abelian group.

\begin{definition}
A \it{symmetric categorical group} $X$ is a symmetric monoidal category $(X, +, 0)$, where $0$ is the monoidal unit for the monoidal structure $+$, satisfying the following conditions:
\begin{itemize}
    \item Every morphism in $X$ is invertible.
    \item For every object $a \in X$, there exists an object $-a \in X$ and a morphism $\eta_a \colon a + (-a) \to 0$.
\end{itemize}
\end{definition}

\begin{remark}\label{rmk: connected components}
If $(X, +, 0)$ is a symmetric categorical group, then the set of connected components $\pi_0(X)$ of $X$ naturally forms an abelian group induced by $(+, 0)$.
\end{remark}

\begin{definition}
A $2$-category $\mathbf{S}$ is a $(2,1)$-category if every $2$-cell is invertible.
\end{definition}

\begin{definition}
For a $(2,1)$-category $\mathbf{S}$, we define the homotopy category $h(\mathbf{S})$ as follows:
\begin{itemize}
    \item The objects of $h(\mathbf{S})$ are the same as those of $\mathbf{S}$.
    \item For $X, Y \in \mathbf{S}$, $h(\mathbf{S})(X, Y) := \pi_0(\mathbf{S}(X, Y))$.
\end{itemize}
\end{definition}

\begin{definition}[{\cite[Definition 3.1]{nakaoka2008cohomology}}]
A $(2,1)$-category $\mathbf{S}$ is a \it{pre-locally-SCG-category} if the following conditions hold:
\begin{itemize}
    \item[(A1)] For any $X, Y \in \mathbf{S}_0$, the hom-category $\mathbf{S}(X, Y)$ admits a symmetric categorical structure $(+_{X, Y}, 0_{X, Y})$. And these structure satisfies $0_{Y, Z} \circ 0_{X, Y} = 0_{X, Z}$ for any $X,Y,Z\in\mathbf{Z}$.
    \item[(A2)] For any $1$-cell $f \colon X \to Y$, the induced functors $f \circ - \colon \mathbf{S}(Z, X) \to \mathbf{S}(Z, Y)$ and $- \circ f \colon \mathbf{S}(Y, Z) \to \mathbf{S}(X, Z)$ are monoidal with coherent map $(f\circ -)_I\colon f\circ 0_{Z,X}\rightarrow 0_{Z,Y}$ and $(-\circ f)_I\colon 0_{Y,Z}\circ f\rightarrow 0_{X,Z}$, satisfying:
    \begin{itemize}
        \item $(0_{Z, Y} \circ -)_I \colon 0_{Z, Y} \circ 0_{X, Y} \to 0_{X, Z}$ corresponds to $\mathrm{id}_{0_{X, Z}}$,
        \item $(- \circ 0_{X, Y})_I \colon 0_{Z, Y} \circ 0_{X, Y} \to 0_{X, Z}$ corresponds to $\mathrm{id}_{0_{X, Z}}$.
    \end{itemize}
\end{itemize}
\end{definition}

\begin{proposition}\label{prop: homotopy category of pre-locally-SGC}
Let $\mathbf{S}$ be a pre-locally-SCG-category. Then, $h(\mathbf{S})$ is pre-additive.
\end{proposition}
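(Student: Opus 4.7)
The plan is to unpack what pre-additivity requires of $h(\mathbf{S})$ and extract each piece directly from axioms (A1) and (A2) by passing to connected components. Recall that to show $h(\mathbf{S})$ is pre-additive one must produce, for every pair of objects $X, Y$, an abelian group structure on $h(\mathbf{S})(X, Y)$, and verify that composition is $\mathbb{Z}$-bilinear.

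The abelian group structure is immediate: by (A1) each hom-category $\mathbf{S}(X, Y)$ is a symmetric categorical group, so Remark \ref{rmk: connected components} endows $h(\mathbf{S})(X, Y) = \pi_0(\mathbf{S}(X, Y))$ with an induced abelian group structure, whose zero is the class $[0_{X,Y}]$. Composition in the underlying $2$-category is a $2$-functor $\mathbf{S}(Y, Z) \times \mathbf{S}(X, Y) \to \mathbf{S}(X, Z)$, and because $\mathbf{S}$ is a $(2,1)$-category every $2$-cell is invertible, so composition sends isomorphic pairs of $1$-cells to isomorphic $1$-cells and therefore descends to a well-defined map on $\pi_0$.

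For bilinearity, fix $f \colon X \to Y$. By (A2), the functor $-\circ f \colon \mathbf{S}(Y, Z) \to \mathbf{S}(X, Z)$ is monoidal, hence comes with natural isomorphisms $g_1 \circ f + g_2 \circ f \cong (g_1 + g_2) \circ f$ in $\mathbf{S}(X, Z)$ together with the coherent morphism $(-\circ f)_I \colon 0_{Y, Z}\circ f \to 0_{X, Z}$. Passing to $\pi_0$ turns both into equalities, so $-\circ [f] \colon h(\mathbf{S})(Y, Z) \to h(\mathbf{S})(X, Z)$ is a group homomorphism. The symmetric argument applied to the monoidal functor $f \circ -$ gives the corresponding statement in the other variable, yielding the bilinearity of composition.

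Since everything reduces to unpacking the definitions after passing to $\pi_0$, I anticipate no genuine obstacle; the only point requiring a little care is translating the \emph{existence} of the monoidal coherence isomorphisms into equalities in $\pi_0$. The stricter clauses in (A1) and (A2) — namely the equality $0_{Y, Z}\circ 0_{X, Y} = 0_{X, Z}$ and the requirement that $(0_{Z,Y}\circ -)_I$ and $(-\circ 0_{X,Y})_I$ correspond to $\mathrm{id}_{0_{X,Z}}$ — are stronger than what the $\pi_0$-level pre-additive structure needs, and presumably matter for later refinements rather than for the statement at hand.
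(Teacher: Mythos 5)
Your proof is correct and follows the same route as the paper, which compresses the entire argument to a single citation of Remark \ref{rmk: connected components}; you simply make explicit the two points the paper leaves implicit, namely that composition descends to $\pi_0$ and that the monoidal structure of $f\circ -$ and $-\circ f$ from (A2) yields bilinearity after passing to connected components.
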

\begin{proof}
This follows directly from Remark \ref{rmk: connected components}.
\end{proof}

\begin{definition}[{\cite[Definition 3.1]{nakaoka2008cohomology}}]
\label{def: locally scg}
A pre-locally-SCG-category $\mathbf{S}$ is a \it{locally-SCG-category} if the following conditions hold:
\begin{itemize}
    \item[(A3)] There exists $0 \in \mathbf{S}_0$ such that:
    \begin{itemize}
        \item For any $X \in \mathbf{S}_0$ and $f \colon X \to 0$, $\mathbf{S}_2(f, 0_{X, 0})$ is a singleton.
        \item For any $X \in \mathbf{S}_0$ and $f \colon 0 \to X$, $\mathbf{S}_2(f, 0_{0, X})$ is a singleton.
    \end{itemize}
    \item[(A4)] $\mathbf{S}$ admits $2$-products (cf.\ \cite[Definition 2.5]{nakaoka2008cohomology}) and $2$-coproducts.
\end{itemize}
\end{definition}

\begin{remark}
The above definition is slightly different from \cite[Definition 3.1]{nakaoka2008cohomology}. \cite{nakaoka2008cohomology} requires the condition of (a3-1). 
\end{remark}


\begin{definition}[{\cite[Definition 3.7]{nakaoka2008cohomology}}]
A locally-SCG-category $\mathbf{S}$ is a \it{relatively exact $2$-category} if the following conditions hold:
\begin{itemize}
    \item[(B1)] $\mathbf{S}$ admits $2$-kernels (cf.\ \cite[Definition 3.7]{nakaoka2008cohomology}) and $2$-cokernels.
    \item[(B2)] For any faithful $1$-cell (cf.\ \cite[Definition 2.6]{nakaoka2008cohomology}) $f$, $f$ defines the kernel of the cokernel of $f$.
    \item[(B3)] For any cofaithful $1$-cell $f$, $f$ defines the cokernel of the kernel of $f$.
\end{itemize}
\end{definition}

From now on, we fix a strict $2$-truncated DG-category $\MA$ (cf.\ Definition \ref{def: n,1-dg}).

\begin{definition}
We define a $2$-category $\MA_\SCG$ as follows:
\begin{itemize}
    \item $\mathrm{ob}(\MA_{\SCG}) := \mathrm{ob}(\MA)$,
    \item For any $X, Y \in \MA$, $\MA_{\SCG}(X, Y)_0 := \MA(X, Y)^0$,
    \item For any $f, g \in \MA(X, Y)^0$, $\MA_{\SCG}(f, g) := \{\eta \in \MA(X, Y)^{-1} \mid d(\eta) = g - f\}$,
    \item The composition of $1$-cells is the same as the composition in $\MA$,
    \item The vertical composition of $2$-cells is defined as their sum,
    \item The horizontal composition of $2$-cells $\phi \colon f \to f'$ and $\psi \colon g \to g'$ is defined as:
    \[
    \psi \bullet \phi := g \circ \phi + \psi \circ f' = g' \circ \phi + \psi \circ f.
    \]
\end{itemize}
\end{definition}

\begin{remark}
Since $\MA$ is a strict $2$-truncated DG-category, we have $\psi \circ \phi = 0$. Therefore:
\[
0 = d(\psi \circ \phi) = g' \circ \phi + \psi \circ f - g \circ \phi - \psi \circ f',
\]
which ensures that $\MA_\SCG$ satisfies the interchange law. Thus, $\MA_\SCG$ is a $2$-category. 
\end{remark}

\begin{proposition}
$H^0(\MA) = h(\MA_\SCG)$.
\end{proposition}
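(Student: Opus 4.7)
The plan is to verify the equality of categories on the nose by comparing objects, morphisms, and composition in turn.

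First, both categories are built on $\mathrm{ob}(\MA)$ by definition, so the content lies entirely in the hom-sets. Since $\MA$ is a strict $2$-truncated DG-category, each hom-complex $\MA(X,Y)^{\bullet}$ is concentrated in degrees $-1$ and $0$, so there are no nonzero elements in degree $1$ to cut out cocycles and one has
\[
H^0(\MA(X,Y)) \;=\; \MA(X,Y)^0 \big/ d\bigl(\MA(X,Y)^{-1}\bigr).
\]

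Next I would unpack $\pi_0(\MA_\SCG(X,Y))$. A $2$-cell $\eta\colon f \to g$ is by definition an element $\eta \in \MA(X,Y)^{-1}$ with $d(\eta) = g - f$, and since the vertical composition is given by addition, the element $-\eta$ provides an inverse $g \to f$; hence $\MA_\SCG(X,Y)$ is a groupoid. In a groupoid the set of connected components is the quotient by ``there exists a morphism between them'', so $f$ and $g$ become identified in $\pi_0(\MA_\SCG(X,Y))$ precisely when $g - f \in d(\MA(X,Y)^{-1})$. This yields a tautological bijection
\[
\pi_0(\MA_\SCG(X,Y)) \;=\; H^0(\MA(X,Y)).
\]

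Finally, the composition of $1$-cells in $\MA_\SCG$ is, by definition, the composition in $\MA$, and the identity $1$-cell at $X$ is the identity of $\MA$; these are exactly the operations used to induce composition and identities in $H^0(\MA)$. Consequently the above bijection is compatible with composition and identities, upgrading to the asserted equality of categories.

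There is no real obstacle here: the statement is a direct unpacking of definitions. The one step that deserves to be made explicit is that the strict $2$-truncation of $\MA$ is what forces connectedness in $\MA_\SCG(X,Y)$ to be realised by a single $2$-cell (rather than by a zigzag), which is what makes $\pi_0$ coincide cleanly with the coboundary quotient defining $H^0$.
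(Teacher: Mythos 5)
Your proof is correct and takes the same route as the paper, which simply records the identity $H^0(\MA(X,Y)) = \pi_0(\MA_\SCG(X,Y))$ as holding by definition; you have merely unpacked why the two quotients of $\MA(X,Y)^0$ coincide (no degree-$1$ part, $2$-cells are exactly coboundary witnesses, the groupoid structure makes zigzags unnecessary) and why composition matches. Nothing further is needed.
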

\begin{proof}
By definition, $H^0(\MA(X, Y)) = \pi_0(\MA_\SCG(X, Y))$.
\end{proof}

\begin{proposition}
$\MA_\SCG$ is a pre-locally-SCG-category.
\end{proposition}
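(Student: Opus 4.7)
The plan is to verify axioms (A1) and (A2) directly, in each case exploiting only two facts about the DG-category $\MA$: each graded piece $\MA(X,Y)^n$ is an abelian group, and composition in $\MA$ is $\BZ$-bilinear. Since the interchange law for $\MA_\SCG$ has already been recorded in the remark, there will be no further coherence to fight; the entire verification is that the candidate structures are strict and well-defined.

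For (A1), I would equip $\MA_\SCG(X,Y)$ with the strict symmetric monoidal structure in which $+_{X,Y}$ on objects is the group law of $\MA(X,Y)^0$ and on morphisms is the group law of $\MA(X,Y)^{-1}$. The key check is that if $\eta\colon f\to f'$ and $\eta'\colon g\to g'$ are $2$-cells, then $d(\eta+\eta')=(f'+g')-(f+g)$, so $\eta+\eta'$ is indeed a $2$-cell $f+g\to f'+g'$. The unit is the zero $1$-cell $0_{X,Y}\in\MA(X,Y)^0$, and associator, unitor, and symmetry are identities. To see $\MA_\SCG(X,Y)$ is a symmetric categorical group I would note: (i) every $2$-cell $\eta$ has vertical inverse $-\eta$, because vertical composition is the sum in $\MA(X,Y)^{-1}$, so $\eta+(-\eta)=0$ is the identity $2$-cell; (ii) for every object $f$, taking $-f\in\MA(X,Y)^0$ together with the zero $2$-cell provides the required $\eta_f\colon f+(-f)\to 0$. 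The compatibility $0_{Y,Z}\circ 0_{X,Y}=0_{X,Z}$ is immediate from bilinearity.

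For (A2), I would make the functors $f\circ -$ and $-\circ f$ strictly monoidal via the equalities $f\circ(g+g')=f\circ g+f\circ g'$ and $(g+g')\circ f=g\circ f+g'\circ f$, which again are just bilinearity of composition; on $2$-cells they act by $\eta\mapsto f\circ\eta$ and $\eta\mapsto\eta\circ f$, well-defined by the same calculation. The coherent maps $(f\circ -)_I\colon f\circ 0_{Z,X}\to 0_{Z,Y}$ and $(-\circ f)_I\colon 0_{Y,Z}\circ f\to 0_{X,Z}$ I would take to be the zero $2$-cells (legitimate, since $f\circ 0=0$ and $0\circ f=0$ strictly). With this choice, the two itemised conditions in (A2) amount to saying that certain composites of zero $2$-cells agree with the identity $2$-cell on $0_{X,Z}$, which holds because both sides are the zero element of $\MA(X,Z)^{-1}$.

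The only potential obstacle is bookkeeping: the symbol $+$ is used both for the monoidal product $+_{X,Y}$ on the hom-category (which lives in degree $0$) and implicitly for vertical composition of $2$-cells (which is the sum in degree $-1$). Once one keeps these two incarnations of addition clearly distinguished and appeals to $\BZ$-bilinearity of composition in $\MA$ for every compatibility, the proof reduces to routine verification.
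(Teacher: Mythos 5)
Your proof is correct and follows the same route as the paper, which simply asserts that $(\MA_\SCG(X,Y),+,0)$ is a symmetric categorical group with all coherence data ($\alpha$, $\lambda$, $\gamma$, and the monoidality constraints for $f\circ-$ and $-\circ f$) taken to be identities; you have merely written out the routine bilinearity checks that the paper leaves implicit. Note that your ``zero $2$-cells'' for $(f\circ-)_I$ and $(-\circ f)_I$ are exactly the identity $2$-cells of $\MA_\SCG$, since the identity $2$-cell on a $1$-cell is the zero element of $\MA(X,Y)^{-1}$, so your choice agrees with the paper's.
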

\begin{proof}
We can verify that $(\MA_\SCG(X, Y), +, 0)$ forms a symmetric categorical group with coherent maps $\alpha = \mathrm{id}$, $\lambda = \mathrm{id}$, and $\gamma = \mathrm{id}$. Similarly, the condition (A2) can also be verified.
\end{proof}

\begin{lemma}\label{lem: additivity}
$\MA$ is additive if and only if $\MA_\SCG$ is a locally-SCG-category without condition (a3-1).
\end{lemma}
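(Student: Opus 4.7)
My plan is to decompose the equivalence along the two axioms (A3) and (A4) distinguishing a locally-SCG-category from a pre-locally-SCG-category, and match these with the two ingredients of additivity for $\MA$, namely a zero object and binary biproducts in the strict $2$-truncated DG sense.

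For (A3), I would unravel the definitions using the identifications $0_{X, 0} = 0 \in \MA(X, 0)^0$ and $\mathbf{S}_2(f, 0_{X, 0}) = \{\eta \in \MA(X, 0)^{-1} \mid d\eta = -f\}$. The singleton requirement then becomes the statement that the differential $d\colon \MA(X, 0)^{-1}\to \MA(X, 0)^0$ is a bijection for every $X$ (existence of $\eta$ for arbitrary $f$, uniqueness by setting $f = 0$), and symmetrically for $\MA(0, X)$. This acyclicity of the hom-complexes into and out of $0$ is precisely the condition that $0$ is a zero object of $\MA$.

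For (A4), the translation goes through the DG-Yoneda principle. Given a biproduct $P$ of $X_1, X_2$ in $\MA$ with structure maps $(p_i, e_i)$, the natural isomorphism of complexes $\MA(Z, P)\cong \MA(Z, X_1)\oplus \MA(Z, X_2)$ induces an equivalence of symmetric categorical groups $\MA_\SCG(Z, P)\simeq \MA_\SCG(Z, X_1)\times \MA_\SCG(Z, X_2)$, yielding simultaneously the universal properties of a $2$-product and a $2$-coproduct in $\MA_\SCG$. Conversely, from a $2$-product $P$ in $\MA_\SCG$ I would extract projection $1$-cells $p_i$ directly and injections $e_i$ from the matching $2$-coproduct; the biproduct identities $p_j e_i = \delta_{ij}\mathrm{id}_{X_i}$ and $e_1 p_1 + e_2 p_2 = \mathrm{id}_P$ then follow from uniqueness in the universal properties, with (A3) used to strictify the a priori only-up-to-$2$-cell data into honest equalities.

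The main obstacle is the last step: promoting the relations supplied by the $2$-categorical universal property (only up to coherent $2$-cells) to strict biproduct identities in $\MA$. Here (A3) is indispensable, because it pins down the relevant $2$-cells uniquely and allows them to be absorbed into the structure maps; the pre-locally-SCG axioms and the interchange law in $\MA_\SCG$ then ensure coherence of the resulting biproduct.
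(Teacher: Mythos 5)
The decomposition into (A3) $\leftrightarrow$ zero object and (A4) $\leftrightarrow$ biproducts is the right architecture, and your unwinding of (A3) as bijectivity of $d\colon \MA(X,0)^{-1}\to\MA(X,0)^{0}$ (hence acyclicity of the hom-complexes into and out of $0$, since the degree $-2$ part vanishes) is correct and matches the paper. The problem is in the (A4) step, in both directions. For the forward direction you invoke ``the natural isomorphism of complexes $\MA(Z,P)\cong\MA(Z,X_1)\oplus\MA(Z,X_2)$,'' but additivity of $\MA$ only provides a direct sum in $H^0(\MA)$: structure maps $i_X,p_X,i_Y,p_Y$ whose biproduct identities hold up to specified degree $-1$ homotopies $u_X,u_Y,v$. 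The Yoneda comparison map is therefore not an isomorphism of complexes, only a homotopy equivalence, and the entire content of the lemma is to check that it still induces an equivalence of the hom-groupoids of $\MA_\SCG$ in the sense of Nakaoka's factorization and uniqueness conditions. The factorization half is indeed free from the product in $H^0$, but the uniqueness of the mediating $2$-cell is exactly where work is required: the paper produces it explicitly as
\[
r \;=\; v\circ (f'-f) \;+\; i_X\circ(\epsilon_X-\epsilon'_X)\;+\;i_Y\circ(\epsilon_Y-\epsilon'_Y),
\]
using that $d(v)\circ r=v\circ d(r)$ because the degree $-2$ composite $v\circ r$ vanishes in a strict $2$-truncated DG-category. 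Your proposal asserts this equivalence rather than proving it, so the one nontrivial step is missing. (The gap is fillable along your lines --- one can show the Yoneda map is a quasi-isomorphism and that quasi-isomorphisms of complexes concentrated in degrees $0,-1$ induce equivalences of the associated groupoids --- but that verification is the same computation in disguise.)

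The second issue is the final paragraph on the converse. Additivity of $\MA$ is detected in $H^0(\MA)=h(\MA_\SCG)$, which is pre-additive by Proposition \ref{prop: homotopy category of pre-locally-SGC}; a $2$-product and $2$-coproduct descend to a product and coproduct there, and all $2$-cells become equalities after passing to $\pi_0$, so no strictification in $\MA$ is needed. Moreover (A3) could not supply it anyway: it controls only $2$-cells of the form $f\Rightarrow 0_{X,0}$ and $f\Rightarrow 0_{0,X}$, not $2$-cells such as $p_Y\circ i_X\Rightarrow 0_{X,Y}$ or $i_X\circ p_X+i_Y\circ p_Y\Rightarrow \id$, so the claim that it ``pins down the relevant $2$-cells uniquely'' does not hold for the cells you need. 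This direction is in fact the easy one; the emphasis on (A3) there is misplaced.
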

\begin{proof}
The "only if" direction is trivial. For the "if" direction, we proceed as follows:

We can easily verify that the zero object $0 \in H^0(\MA)$ satisfies condition (i) of Definition \ref{def: locally scg}. We now prove the existence of $2$-products in $\MA_\SCG$. Consider $X, Y \in \mathbf{S}_0$ and their direct sum $Z \cong X \oplus Y$ in $H^0(\MA)$. This direct sum is defined by the following diagram:
\[
\begin{tikzcd}
X \ar[r, transform canvas={yshift=3pt}, "i_X"] 
& Z \ar[l, transform canvas={yshift=-3pt}, "p_X"] 
  \ar[r, transform canvas={yshift=3pt}, "p_Y"] 
& Y \ar[l, transform canvas={yshift=-3pt}, "i_Y"]
\end{tikzcd}
\]
together with $2$-cells $u_X\colon p_X \circ i_X \to \id_X$, $u_Y\colon p_Y \circ i_Y \to \id_Y$, and $v\colon p_X \circ i_X + p_Y \circ i_Y \to \id_Z$. 

Next, consider a diagram $X \xleftarrow{f_X} W \xrightarrow{f_Y} Y$ in $\MA_\SCG$. The factorization condition (cf.\ Definition \cite[Definition 2.3]{nakaoka2008cohomology}) holds because $Z$ defines a product of $X$ and $Y$ in $H^0(\MA)$. We only check the uniqueness condition (cf.\ Definition \cite[Definition 2.3]{nakaoka2008cohomology}). 

Suppose there are two factorizations $(f, \epsilon_X, \epsilon_Y)$ and $(f', \epsilon'_X, \epsilon'_Y)$, and let $r\colon f \Rightarrow f'$ satisfy the following equations:
\[
(p_X \cdot r) \circ \epsilon'_X = \epsilon_X, \quad (p_Y \cdot r) \circ \epsilon'_Y = \epsilon_Y \quad \text{in } \MA_\SCG.
\]
In $\MA$, this implies:
\[
p_X \circ r + \epsilon'_X = \epsilon_X, \quad p_Y \circ r + \epsilon'_Y = \epsilon_Y.
\]
Now, we compute as follows:
\begin{align*}
r &= d(v) \circ r + i_X \circ (\epsilon_X - \epsilon'_X) + i_Y \circ (\epsilon_Y - \epsilon'_Y) \\
  &= v \circ d(r) + i_X \circ (\epsilon_X - \epsilon'_X) + i_Y \circ (\epsilon_Y - \epsilon'_Y) \\
  &= v \circ (f' - f) + i_X \circ (\epsilon_X - \epsilon'_X) + i_Y \circ (\epsilon_Y - \epsilon'_Y).
\end{align*}
Thus, the existence and uniqueness conditions are satisfied.
\end{proof}

\begin{proposition}\label{prop: 2-mono is faithful}
Let $f \colon X \to Y$ be a closed morphism of degree $0$. Then $f$ is a $2$-monomorphism if and only if $f$ is faithful in $\MA_\SCG$.
\end{proposition}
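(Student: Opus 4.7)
The plan is to reduce each side of the biconditional to the same concrete statement about the representable chain map $f\circ- \colon \MA(Z,X)\to\MA(Z,Y)$, namely that its induced map on $H^{-1}$ is injective for every object $Z$.

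I would first unpack faithfulness in $\MA_\SCG$. By construction, a $2$-cell $\phi\colon g\Rightarrow g'$ in $\MA_\SCG(Z,X)$ is an element $\phi\in\MA(Z,X)^{-1}$ with $d(\phi)=g'-g$, and its horizontal image under $f$ is simply $f\circ\phi$. Faithfulness of $f\circ- \colon \MA_\SCG(Z,X)\to\MA_\SCG(Z,Y)$ demands that for any parallel $\phi,\phi'\colon g\Rightarrow g'$, the equality $f\circ\phi=f\circ\phi'$ forces $\phi=\phi'$. Setting $\psi:=\phi-\phi'$, which automatically satisfies $d(\psi)=0$, this collapses to the single condition
\[
\forall\,\psi\in\MA(Z,X)^{-1}\text{ with }d\psi=0,\quad f\circ\psi=0 \ \Longrightarrow\ \psi=0.
\]
Since $\MA$ is strictly $2$-truncated, the group of closed $(-1)$-chains $Z^{-1}\MA(Z,X)$ coincides with $H^{-1}\MA(Z,X)$, so faithfulness of $f$ is exactly the injectivity of $f_* \colon H^{-1}\MA(Z,X)\to H^{-1}\MA(Z,Y)$ for every $Z$.

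Next, I would apply the paper's definition of $2$-monomorphism from the earlier section on abelian $2$-truncated DG-categories. That notion is representable: $f$ is a $2$-monomorphism precisely when each $f_* \colon \MA(Z,X)\to\MA(Z,Y)$ is a monomorphism in the appropriate $2$-truncated enriched sense, and for hom-complexes concentrated in degrees $-1,0$ this amounts to injectivity of the induced map on $H^{-1}$. Having put both conditions in the same explicit form, the biconditional follows directly.

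The only content-bearing manipulation is the additive reduction from a parallel pair of $2$-cells to a single closed $(-1)$-chain via subtraction; this is legitimate because vertical composition in $\MA_\SCG$ is defined as addition in $\MA(Z,X)^{-1}$ and $d$ is a group homomorphism. The principal obstacle is bookkeeping: matching the paper's precise definition of $2$-monomorphism to the $H^{-1}$-injectivity condition, and keeping track of base changes of $2$-cells using the symmetric categorical group structure established in Proposition~\ref{prop: homotopy category of pre-locally-SGC}. Once those identifications are in place, the equivalence is formal.
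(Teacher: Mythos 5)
Your proposal is correct and follows essentially the same route as the paper: both sides are reduced to injectivity of $H^{-1}(f\circ -)\colon H^{-1}(\MA(Z,X))\to H^{-1}(\MA(Z,Y))$ for all $Z$, using that the hom-complexes vanish in degree $-2$ so that closed $(-1)$-cycles are exactly $H^{-1}$. The only difference is that you prove the reduction of faithfulness to this injectivity directly by translating parallel $2$-cells (subtracting to get a closed degree $-1$ element), whereas the paper simply cites \cite[Lemma 3.23]{nakaoka2008cohomology} for the same fact.
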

\begin{proof}
By \cite[Lemma 3.23]{nakaoka2008cohomology}, $f \colon X \to Y$ is faithful in $\MA_\SCG$ if and only if:
\[
f \circ - \colon \MA_\SCG(0_{A, X}, 0_{A, X}) \to \MA_\SCG(0_{A, Y}, 0_{A, Y})
\]
is injective. This corresponds to the following morphism:
\[
H^{-1}(f^\land) \colon H^{-1}(\MA(A, X)) \to H^{-1}(\MA(A, Y)).
\]
Thus, the statement holds.
\end{proof}

\begin{proposition}\label{prop: 2-kernel}
Let $f \colon A \to A'$ be a closed morphism of degree $0$. The following diagram:
\[
\begin{tikzcd}
K \ar[r, "k"] \ar[rr, "0", bend left = 45, "" name=zero] & |[alias=Y]| A \ar[r, "f"] & A'
\ar[Rightarrow, from=Y, to=zero, "h_f", shorten >=1mm]
\end{tikzcd}
\]
is a homotopy kernel of $f$ if and only if the following conditions hold:
\begin{itemize}
    \item[(i)] For any pair $(x, h_x)$, where $x \colon X \to A$ is a closed morphism of degree $0$ and $h_x \colon X \to A'$ is a morphism of degree $-1$ satisfying $d(h_x) = -f \circ x$, there exists a closed morphism $x' \colon X \to K$ of degree $0$ and morphism $\overline{x'} \colon X \to A$ of degree $-1$ such that $d(\overline{x'}) = x - k \circ x'$ and $h_f \circ x' = f \circ \overline{x'} + h_x$.
    \item[(ii)] If $(x'_0, \overline{x'_0})$ and $(x'_1, \overline{x'_1})$ satisfy (i), then there exists a unique morphism $r \colon X \to K$ of degree $-1$ such that $d(r) = x'_1 - x'_0$ and $k \circ r + \overline{x'_1} = \overline{x'_0}$.
\end{itemize}
\end{proposition}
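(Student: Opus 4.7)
The plan is to unwind the definition of a homotopy (i.e.\ $2$-)kernel in the pre-locally-SCG-category $\MA_\SCG$ and translate it, term by term, into the DG-language of $\MA$ via the dictionary already fixed: a $2$-cell $\alpha\colon g\Rightarrow g'$ in $\MA_\SCG$ between $1$-cells $g,g'\colon U\to V$ is an element $\eta\in \MA(U,V)^{-1}$ with $d(\eta)=g'-g$, vertical composition of $2$-cells is addition in $\MA(U,V)^{-1}$, and whiskering by a closed degree $0$ morphism is the usual composition in $\MA$. Under this dictionary, the datum $h_f\colon f\circ k\Rightarrow 0$ is exactly an element $h_f\in\MA(K,A')^{-1}$ with $d(h_f)=-f\circ k$, and similarly a datum $h_x\colon f\circ x\Rightarrow 0$ is an element $h_x\in\MA(X,A')^{-1}$ with $d(h_x)=-f\circ x$.

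Next I would spell out the factorization axiom of a $2$-kernel (cf.\ \cite[Definition 2.3]{nakaoka2008cohomology} applied in the kernel context of \cite[Definition 3.7]{nakaoka2008cohomology}): given $(x,h_x)$ as above, one requires a $1$-cell $x'\colon X\to K$ together with a $2$-cell $\alpha\colon k\circ x'\Rightarrow x$ such that the pasting of $h_f\circ x'$ with $f\cdot\alpha$ equals $h_x$. Under the dictionary, $\alpha$ is exactly a morphism $\overline{x'}\in\MA(X,A)^{-1}$ with $d(\overline{x'})=x-k\circ x'$; whiskering of $h_f$ by $x'$ gives $h_f\circ x'\in\MA(X,A')^{-1}$; whiskering of $\alpha$ by $f$ gives $f\circ\overline{x'}$; and vertical composition becomes addition. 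So the pasting equation becomes exactly $h_f\circ x' = f\circ\overline{x'} + h_x$, which is condition (i). This direction is essentially bookkeeping; once the signs line up, existence is immediate in both directions.

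For the uniqueness axiom I would argue the same way. Two factorizations $(x'_0,\alpha_0)$ and $(x'_1,\alpha_1)$ are to be compared by a unique $2$-cell $r\colon x'_0\Rightarrow x'_1$ such that the whiskered pasting $(k\cdot r)$ followed by $\alpha_1$ equals $\alpha_0$. Under the dictionary, $r$ becomes an element of $\MA(X,K)^{-1}$ with $d(r)=x'_1-x'_0$, and the compatibility becomes $k\circ r + \overline{x'_1} = \overline{x'_0}$, which is condition (ii).

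The only step that requires care is checking that the pasting diagrams in Nakaoka's definition of $2$-kernel, which are built out of horizontal and vertical composites of $2$-cells and coherent maps $(f\circ -)_I$, really do reduce to the clean additive formulas written above. The main obstacle is therefore purely bookkeeping: making sure the coherent maps from axiom (A2) act as identities on our chosen zero $2$-cells (so that, e.g.\ the composite $h_f\circ x'$ with the coherence comparing $f\circ(0_{X,K}\circ x')$ and $0_{X,A'}$ contributes nothing), and making sure the signs in $d(\eta)=g'-g$ are consistent throughout so the final equations are $h_f\circ x'=f\circ\overline{x'}+h_x$ rather than a signed variant. Once this verification is done, both implications of the proposition follow at once.
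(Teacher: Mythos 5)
There is a genuine gap here, and it is a matter of which notion of kernel the proposition is actually about. You read ``homotopy kernel'' as Nakaoka's $2$-kernel in the $2$-category $\MA_\SCG$ and then translate his factorization and uniqueness axioms through the dictionary between $2$-cells and degree $-1$ morphisms. But in the paper the homotopy kernel is an intrinsic DG-categorical notion in $\MA$ (the universal property of Proposition \ref{prop: univ of kernels}), whose factorizations come with a full filler $(\overline{x'},\widetilde{x'})$ including a degree $-2$ component $\widetilde{x'}$ witnessing $d(\widetilde{x'})=h_f\circ x'-f\circ\overline{x'}-h_x$, and whose universal property must be checked against morphisms of \emph{all} degrees, not just degree $0$. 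The whole point of the proposition is that in a strict $2$-truncated DG-category the degree $-2$ morphisms vanish, so the coherence data $\widetilde{x'}$ (and the analogous $\overline{r}$ furnished by Proposition \ref{prop: torikae} in the uniqueness step) are forced to be zero and the homotopy-coherent conditions collapse to the strict equations (i) and (ii). Your argument never touches this truncation step, which is where the actual mathematical content lies; if the statement were merely the unwinding of Nakaoka's axioms in $\MA_\SCG$, the subsequent Proposition \ref{prop: left exact} (left exactness in $\MA$ iff in $\MA_\SCG$) would be circular rather than a consequence.

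Concretely, two things are missing. First, in the forward direction you must start from the DG universal property and argue that the higher filler is zero by degree reasons; your pasting-diagram bookkeeping in $\MA_\SCG$ does not substitute for this. Second, in the converse direction it is not enough to produce factorizations of closed degree $0$ morphisms: the paper also has to verify the universal property against closed morphisms $x\colon X\to A$ of degree $-1$ with $f\circ x=0$, which it does by applying condition (ii) to the two pairs $(0,0)$ and $(0,x)$ to produce $r$ with $d(r)=0$ and $k\circ r=x$. Nothing in your proposal addresses this degree $-1$ case, because in the purely $2$-categorical reading there is no such condition to check. The translation you carry out is essentially the content of Proposition \ref{prop: left exact}, not of this proposition.
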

\begin{proof}
Assume the above diagram is the homotopy kernel of $f$. By Proposition \ref{prop: univ of kernels}, we have a closed morphism $x'$ of degree $0$ and its filler $(\overline{x'},\widetilde{x'})$. Since $\mathcal{A}$ is an additive strict $(2,1)$-category, $\widetilde{x'}=0$ and 
\[
0=d(\widetilde{x'})=h_f\circ x'-f\circ \overline{x'}-h_x.
\]
Thus, (i) is verified. Next, let us verify (ii). Take $(x'_0,\overline{x'_0})$ and $(x'_1,\overline{x'_1})$ as in (ii). Then $x'_i$ has a filler $(\overline{x'_i},0)$, and by Proposition \ref{prop: torikae}, there exist a morphism $r\colon X\to K$ of degree $-1$ and a morphism $\overline{r}\colon X\to K$ of degree $-2$ such that $d(r)=x'_1-x'_0$ and $d(\overline{r})=(\overline{x'_1}-\overline{x'_0})+k\circ f$. However, $\overline{r}=0$ since $\overline{r}$ is a morphism of degree $-2$. Thus, we have $k\circ r+\overline{x'_1}=\overline{x'_0}$. To show the uniqueness of $r$, assume $r_i\colon X\to K$ satisfies $d(r_i)=x'_1-x'_0$ and $k\circ r_i+\overline{x'_1}=\overline{x'_0}$ for $i=1,2$. Then $r_1-r_0$ is a closed morphism of degree $-1$ and satisfies $k\circ (r_1-r_0)=0$. Thus, $r_1-r_0$ has a filler $(0,0)$. By Proposition \ref{prop: torikae}, there exists a morphism $r''\colon X\to K$ of degree $-2$ such that $d(r'')=r_1-r_0$. Since $r''$ is a morphism of degree $-2$, $r_1=r_0$ holds.

Conversely, assume (i) and (ii). It suffices to verify (i) and (ii) in Proposition \ref{prop: univ of kernels} for morphisms of degree  $0$ and $-1$. Take a pair of morphisms $(x,h_x)$ such that $x\colon X\to A$ is a morphism of degree $0$ and $h_x\colon X\rightarrow A$ satisfying $d(h_x)=-f\circ x$. Then, there exist $x'\colon X\to K$ and $\overline{x'}\colon X\to A$ such that $d(x')=0$ and $d(\overline{x'})=h_f\circ x'$. This implies $x'$ has a filler $(\overline{x'},0)$. Assume that $x'_0$ and $x'_1$ have fillers $(\overline{x'_0},0)$ and $(\overline{x'_1},0)$. By (ii) of this proposition, there exists $r\colon X\to K$ such that $d(r)=x'_1-x'_0$. Thus, the universality of the homotopy kernel in the case of degree $0$ holds. For a closed morphism $x\colon X\to A$ of degree $(-1)$-th such that $f\circ x=0$, pairs of morphisms $(0,0)$ and $(0,x)$ satisfy condition (i) in this proposition. Thus, by condition (ii), there exists $r\colon X\to K$ such that $d(r)=0$ and $k\circ r=x$. Hence, the universality of the homotopy kernel of $f$ is verified.
\end{proof}

\begin{proposition}\label{prop: left exact}
Consider the following diagram in $\MA$:
\[
\begin{tikzcd}
\quad K \ar[r,"k",swap] \ar[rr,"0",bend left =45,"{}"name=zero] & |[alias=Y]| A \ar[r,"f",swap] & A' \quad\,
\ar[Rightarrow,from=Y,to=zero,"h_f",shorten >=1mm]
\end{tikzcd}
\]
This diagram is left exact in $\MA$ if and only if it is left exact in $\MA_\SCG$.
\end{proposition}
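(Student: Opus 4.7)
The plan is to characterize both sides of the equivalence via explicit universal properties and check that they match term by term. Proposition~\ref{prop: 2-kernel} already does half the job: it shows that left exactness in $\MA$ (i.e.\ $(k,h_f)$ being a homotopy kernel of $f$) is equivalent to the two conditions (i) and (ii) stated there. So the work is to unfold Nakaoka's definition of $2$-kernel in $\MA_\SCG$ and observe that, after translation, it gives exactly the same two conditions.

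First I would unfold ``left exact in $\MA_\SCG$''. By \cite[Definition~3.7]{nakaoka2008cohomology}, the pair $(k,h_f)$ is a $2$-kernel of $f$ when for every $1$-cell $x\colon X\to A$ in $\MA_\SCG$ and every $2$-cell $h_x\colon f\circ x\Rightarrow 0_{X,A'}$ there exist a $1$-cell $x'\colon X\to K$ and a $2$-cell $\overline{x'}\colon k\circ x'\Rightarrow x$ making the obvious diagram built from $h_f$ commute, and such a factorization is unique up to a unique mediating $2$-cell.

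Next I would apply the dictionary built into the definition of $\MA_\SCG$: a $1$-cell $x\colon X\to A$ is a closed degree-$0$ morphism of $\MA$, and a $2$-cell $\eta\colon g\Rightarrow g'$ is an element $\eta\in\MA(X,Y)^{-1}$ with $d(\eta)=g'-g$; horizontal whiskering corresponds to composition in $\MA$ and vertical composition corresponds to sum. Under this dictionary the data $(x',\overline{x'})$ in Nakaoka's $2$-kernel property becomes exactly the data $(x',\overline{x'})$ of Proposition~\ref{prop: 2-kernel}(i), because $d(\overline{x'})=x-k\circ x'$ means precisely $\overline{x'}\colon k\circ x'\Rightarrow x$. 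The coherence condition, namely that the vertical composite $h_x\bullet_v(f\cdot\overline{x'})$ equals the whiskered $2$-cell $h_f\cdot x'$, translates into the equation $h_f\circ x'=f\circ\overline{x'}+h_x$ in $\MA$. The uniqueness clause translates in the same way into the existence-and-uniqueness of a degree-$(-1)$ morphism $r$ with $d(r)=x'_1-x'_0$ and $k\circ r+\overline{x'_1}=\overline{x'_0}$, which is condition (ii).

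Putting these together, ``left exact in $\MA_\SCG$'' is the conjunction of conditions (i) and (ii) of Proposition~\ref{prop: 2-kernel}, which is equivalent to ``left exact in $\MA$''. The only delicate part of the argument is the coherence equation: one has to check carefully that Nakaoka's pasting identity relating $h_f$, $\overline{x'}$ and $h_x$ turns into the single DG-equation $h_f\circ x'=f\circ\overline{x'}+h_x$ once vertical composition is identified with addition in $\MA(X,A')^{-1}$ and the direction of $h_f$ (from $f\circ k$ to $0$, so that $d(h_f)=-f\circ k$) is tracked with the right sign. Everything else is bookkeeping.
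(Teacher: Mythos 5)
Your proposal is correct and matches the paper's intent exactly: the paper's proof is simply ``By Proposition~\ref{prop: 2-kernel}'', the point being that conditions (i) and (ii) there are precisely the translation of Nakaoka's $2$-kernel universal property into DG language via the dictionary $d(\eta)=g'-g \leftrightarrow \eta\colon g\Rightarrow g'$. You spell out that translation (including the sign bookkeeping for $h_f$ and $h_x$) more explicitly than the paper does, but the argument is the same.
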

\begin{proof}
By Proposition \ref{prop: 2-kernel}.
\end{proof}




\begin{theorem}\label{thm: abelian 2-truncated}
Let $\MA$ be a $2$-truncated DG-category. Then $\MA$ is an abelian $(2,1)$-category if and only if $\MA_\SCG$ is a relatively exact $2$-category without (a3-1).
\end{theorem}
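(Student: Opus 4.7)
The plan is to decompose the biconditional into a conjunction of four matching pairs of conditions, one pair for each axiom in the definitions of ``abelian $(2,1)$-category'' and ``relatively exact $2$-category without (a3-1).'' Each pair is either already established in this section or follows by a direct dualization, so the theorem becomes an assembly argument rather than a fresh computation.

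First I would peel off the additive layer: Lemma~\ref{lem: additivity} already gives that $\MA$ is additive if and only if $\MA_\SCG$ is a locally-SCG-category without (a3-1). This matches the additive backbone of the abelian hypothesis with axioms (A1)--(A4) on the $\MA_\SCG$ side, so after invoking it I may freely assume both sides are additive/locally-SCG for the remainder of the argument.

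Second, I would handle (B1). By Proposition~\ref{prop: 2-kernel} together with Proposition~\ref{prop: left exact}, a diagram in $\MA$ is a homotopy kernel precisely when the corresponding diagram in $\MA_\SCG$ is a $2$-kernel in the sense of \cite{nakaoka2008cohomology}. Passing to $\MA^\op$ (which is again a strict $2$-truncated DG-category, so all preceding results apply) yields the analogous statement for cokernels. Hence $\MA$ admits all homotopy kernels and cokernels iff $\MA_\SCG$ satisfies (B1).

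Third, I would translate (B2) and (B3). Proposition~\ref{prop: 2-mono is faithful} identifies $2$-monomorphisms in $\MA$ with faithful $1$-cells in $\MA_\SCG$; combined with the kernel/cokernel correspondence from the previous step, the condition ``every $2$-monomorphism is a homotopy kernel of its homotopy cokernel'' in $\MA$ turns into ``every faithful $1$-cell is the $2$-kernel of its $2$-cokernel'' in $\MA_\SCG$, which is (B2). Dualizing via $\MA^\op$ converts the corresponding $2$-epi/cofaithful statement into (B3). Putting the four equivalences together gives the theorem.

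The main obstacle I expect is purely bookkeeping: I need to verify that Propositions~\ref{prop: 2-kernel}, \ref{prop: left exact}, and \ref{prop: 2-mono is faithful} really do dualize without change, and to spell out exactly what ``abelian $(2,1)$-category'' means in the form of the four axioms above so that the matching is visible line by line. There is no new categorical content beyond what has already been proved, and no delicate DG computation remains to be done.
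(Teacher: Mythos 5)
Your proposal is correct and follows essentially the same route as the paper: Lemma~\ref{lem: additivity} for the additive/locally-SCG layer, then Proposition~\ref{prop: left exact} (resting on Proposition~\ref{prop: 2-kernel}) and Proposition~\ref{prop: 2-mono is faithful} for the exactness axioms, with the cokernel side handled by passing to the opposite category. You actually spell out the dualization step more explicitly than the paper does, but the underlying argument is the same assembly of previously proved equivalences.
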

\begin{proof}
By Lemma \ref{lem: additivity}, $\MA$ is additive if and only if $\MA_\SCG$ is a pre-locally-SCG-category. The other conditions are equivalent by Proposition \ref{prop: left exact} and Proposition \ref{prop: 2-mono is faithful}.
\end{proof}

\subsection{Abelian $(n,1)$-categories}

In the paper \cite{stefanich2023derived}, abelian $(n,1)$-categories in the higher categorical settings are introduced. In this section, we use quasicategories as a model of $\infty$-categories. This subsection is based on \cite{chen2023exact}. Since the main claim essentially follows from a combination of these results, we will instead present the relationships here.

The main result of this subsection is the following. 

\begin{proposition}\label{thm: stef}
Let $\MA$ be a non-positive DG-categories. Then $\MA$ is an abelian $n$-truncated DG-category if and only if $N_{\dg}(\MA)$ is an abelian $(n,1)$-category in the sense of \cite{stefanich2023derived}.
\end{proposition}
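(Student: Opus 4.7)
The plan is to use the DG-nerve $N_{\dg}$ as a bridge and combine two inputs: the translation in \cite{chen2023exact} identifying DG-categorical homotopy fibers and cofibers with their quasicategorical counterparts after applying $N_{\dg}$, and the definition in \cite{stefanich2023derived} which phrases the abelian axioms in terms of these quasicategorical notions together with an $n$-truncatedness condition.

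First I would verify that $N_{\dg}(\MA)$ is an $(n,1)$-category exactly when $\MA$ is $n$-truncated. This reduces to the standard computation of mapping spaces of the DG-nerve: for $X, Y \in \MA$, one has $\pi_{k}\,\mathrm{Map}_{N_{\dg}(\MA)}(X,Y) \cong H^{-k}(\MA(X,Y))$ for $k \geq 0$, so vanishing of these homotopy groups above $n$ matches the $n$-truncation hypothesis on the DG side, and the non-positivity assumption ensures there are no contributions from positive cohomological degrees.

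Next I would translate the additivity of $\MA$ into semi-additivity of $N_{\dg}(\MA)$, using that $N_{\dg}$ preserves finite biproducts when they exist (as the $\infty$-categorical biproduct is computed from the $H^{0}$-level biproduct together with the trivial higher coherence data). For the core of the argument I would invoke \cite{chen2023exact} to identify the homotopy kernel and cokernel of a closed degree-$0$ morphism of $\MA$, as formulated in Proposition~\ref{prop: 2-kernel} and its cokernel analogue, with the fiber and cofiber of the corresponding morphism in $N_{\dg}(\MA)$. Once kernels, cokernels, monomorphisms, and epimorphisms are aligned under $N_{\dg}$ — with Proposition~\ref{prop: 2-mono is faithful} supplying the characterization of $2$-monomorphisms on the DG side — the abelian axiom asserting that every morphism's image agrees with its coimage transports verbatim between the two formulations.

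The main obstacle I anticipate lies in the careful matching of axioms: Stefanich's formulation packages the abelian condition together with truncation hypotheses and a specific shape of mono/epi factorization, and one must confirm that these line up on the nose with the axioms defining an abelian $n$-truncated DG-category rather than a slightly weaker or stronger variant. The bulk of the verification is therefore an axiom-by-axiom comparison, relying on the dictionaries assembled in \cite{chen2023exact} to move each condition from one setting to the other; this is why the author's proof can be compressed to a reference to those two sources rather than a lengthy direct argument.
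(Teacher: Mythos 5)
Your overall strategy --- use the DG-nerve as a dictionary and lean on the comparison results of \cite{chen2023exact} --- is the same as the paper's, and your identification of $\pi_k\,\mathrm{Map}_{N_{\dg}(\MA)}(X,Y)\cong H^{-k}(\MA(X,Y))$ as the reason $n$-truncation and $n$-monomorphisms match $(n-2)$-truncated morphisms is exactly the right input (the paper records this in a remark following Definition \ref{def: ste}). However, you skip over the one genuinely technical point on which the paper's proof turns: the comparison of \cite[Lemma 5.39]{chen2023exact} between homotopy pullbacks in $\MA$ and pullbacks in $N_{\dg}(\MA)$ applies only to \emph{strictly} commutative squares, whereas the squares arising from a homotopy kernel or cokernel (and from a pushout diagram in $\mathrm{Fun}(\Delta^1\times\Delta^1,N_{\dg}(\MA))$) commute only up to a degree $-1$ homotopy $h$. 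The paper handles this by first replacing $\MA$, up to quasi-equivalence, by a DG-category admitting pre-envelopes and pre-covers, and then using Lemma \ref{lem: torikae pb} to strictify each square via $\Cone(\id_{X_0})\oplus X_1$ before invoking the comparison lemma. Without this reduction and strictification step your plan stalls precisely where the two notions of (co)fiber sequence are supposed to be matched, so you should state explicitly that one may assume pre-envelopes and pre-covers exist and that abelianness is invariant under quasi-equivalence.

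Two smaller misfires: Proposition \ref{prop: 2-mono is faithful} belongs to the comparison with relatively exact $2$-categories via $\MA_\SCG$ and plays no role here --- the relevant statement is the mapping-space computation you already wrote down. And the axiom being transported is not ``image agrees with coimage'' but rather condition (ii) of Definition \ref{def: ste}: for an $(n-2)$-truncated morphism the cofiber sequence is a fiber sequence, together with its dual; the paper's argument is an explicit two-directional check of this condition against condition (ii) of the DG-side definition, not a transport of a mono/epi factorization axiom.
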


\begin{remark}
In above theorem,  $N_{\dg}(\MA)$ is the DG-nerve of DG-category $\MA$. The definition and properties of DG-nerve functor $N_{\dg}\colon\dgcat\to\sset$ are written in \cite[1.3]{lurie2017higheralgebra}
\end{remark}

\begin{definition}[{\cite[Definition 6.2.4]{stefanich2023derived}}]
\label{def: ste}
Let $\MC$ be a quasicategory. $\MC$ is an \it{abelian $(n,1)$-category} if the following properties hold:
\begin{itemize}
    \item[(i)] $\MC$ an additive $(n,1)$-category with finite limits and colimits,
    \item[(ii)] For any $(n-2)$-truncated morphism $f\colon X\to Y$, a cofiber sequence $X\to Y\to \mathrm{cof}(f)$ is also a fiber sequence,
    \item[(iii)] The dual of (ii).
\end{itemize} 
\end{definition}

\begin{remark}
A morphism $f$ is $(n-2)$-truncated if $\mathrm{map}_{\MC}(C,X)\to\mathrm{map}_{\MC}(C,X)$ is $(n-2)$-truncated morphism between Kan complexes. Thus $f$ is n-monomorphism in DG-category $\MA$ if and only if $f$ is $(n-2)$-truncated in DG-nerve $N_{\dg}(\MA)$.
\end{remark}

To compare homotopy cartesian squares in DG-categories with pullbacks in $(\infty,1)$-categories, we impose a convenient assumption on DG-categories: the existence of pre-envelopes and pre-covers. It should be noted that any DG-category is quasi-equivalent to one satisfying these conditions. This is introduced by \cite[\S\S~5.25]{chen2023exact}.

\begin{definition}
Let $\MA$ be a non-positive DG-category. $\MA$ admits \it{pre-envelopes} (resp.\ \it{pre-covers}) if for any $X\in \MA$, there exist $\Cone(\id_X)\in \MA$ (resp.\ $\Cocone(\id_X)\in\MA$). 
\end{definition}

The following lemmas are easily valified. 

\begin{lemma}
Any additive non-positive DG-category is quasi-equivalent to a non-positive DG-category admits pre-envelopes and pre-covers such that $Z^0(\MA)$ is additive. 
\end{lemma}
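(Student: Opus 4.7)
The plan is to construct $\MA'$ by formally adjoining to $\MA$, for each $X \in \MA$, new objects $K^+_X$ and $K^-_X$ playing the roles of $\Cone(\id_X)$ and $\Cocone(\id_X)$, together with biproducts as needed to make $Z^0(\MA')$ additive.

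Concretely, I would specify the hom complexes directly. Those between objects of $\MA$ remain unchanged. For the new objects, one uses the standard mapping-cone and mapping-cocone formulas but truncated to non-positive degrees: for example, a closed degree-$0$ morphism $K^+_X \to Y$ is a pair $(\phi, h)$ with $\phi \in \MA(X,Y)^0$ and $h \in \MA(X,Y)^{-1}$ satisfying $\phi = d(h)$, while morphisms $Y \to K^+_X$ in degree $n \leq 0$ are encoded by the twisted-cone formulas applied to $\id_X$. Composition is matrix composition inherited from $\MA$. One routinely verifies that $\MA'$ is a well-defined DG-category whose hom complexes are concentrated in non-positive degrees, and that the structural data witnesses $K^+_X = \Cone(\id_X)$ and $K^-_X = \Cocone(\id_X)$ in the sense of \cite{chen2023exact}.

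The inclusion $\MA \hookrightarrow \MA'$ is fully faithful by construction. It is essentially surjective on $H^0$ because the defining null-homotopy of $\id_X$ built into $K^\pm_X$ forces $K^\pm_X \cong 0$ in $H^0(\MA')$, and $0 \in \MA$ by additivity of $\MA$. Hence the inclusion is a quasi-equivalence. Additivity of $Z^0(\MA')$ then follows by further closing under finite biproducts; since $\MA$ is already additive, this closure adds only objects that are biproducts involving the $K^\pm_X$, preserving non-positivity as well as the pre-envelope and pre-cover properties.

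The main obstacle is verifying that the non-positive truncation of the hom complexes is still compatible with composition and retains enough data for $K^\pm_X$ to satisfy the universal properties of $\Cone(\id_X)$ and $\Cocone(\id_X)$. This is possible precisely because the untruncated mapping cone of $\id_X$ is acyclic, so the portion in positive degree that we discard consists entirely of boundaries and carries no essential information; what remains in non-positive degrees records exactly the closed morphisms and null-homotopies needed to witness the pre-envelope/pre-cover structure.
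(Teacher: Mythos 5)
The paper offers no proof of this lemma (it is listed as ``easily verified'', with the construction implicitly deferred to \cite[\S\S~5.25]{chen2023exact}), so your proposal has to stand on its own. Its outline --- realize $\Cone(\id_X)$ and $\Cocone(\id_X)$ as twisted complexes, apply the smart truncation $\tau_{\leq 0}$ to the hom complexes to stay in non-positive degrees, add formal finite biproducts, and observe that every new object is either contractible or $H^0$-isomorphic to an old one, so the inclusion is a quasi-equivalence --- is the standard and correct strategy. But there is a genuine gap in the construction as written: you adjoin $K^{\pm}_X$ only for $X$ in the original category $\MA$, whereas the conclusion requires $\Cone(\id_X)$ and $\Cocone(\id_X)$ to exist for \emph{every} object of the new category $\MA'$, in particular for the adjoined objects themselves. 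The biproducts are harmless, since $\Cone(\id_{A\oplus B})\cong\Cone(\id_A)\oplus\Cone(\id_B)$, but $\Cone(\id_{K^+_X})$ is a four-term twisted complex on shifts of $X$ and is not among the objects you adjoin, so a single adjunction step does not close up. You must either iterate the construction countably often and take the union, or, more cleanly, work inside the hom-wise $\tau_{\leq 0}$-truncation of the DG category of $\MA$-modules and let $\MA'$ be the smallest full subcategory containing the representables and closed under $\Cone(\id_{-})$, $\Cocone(\id_{-})$ and finite direct sums; all of your verifications then apply verbatim to that closure.

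A secondary point: your justification for discarding the positive-degree part of the hom complexes --- that it ``consists entirely of boundaries'' because the cone of an identity is acyclic --- is not accurate (acyclicity makes positive \emph{cocycles} into boundaries, not arbitrary positive elements) and is in any case not the reason the truncation is legitimate. What one actually needs is that $\tau_{\leq 0}$ is compatible with composition, so that the truncated homs again form a DG category, and that the universal properties characterizing $\Cone(\id_X)$ and $\Cocone(\id_X)$ in a non-positive DG category refer only to degree-$0$ cycles and negative-degree elements, all of which $\tau_{\leq 0}$ preserves. With that correction and the closure issue repaired, the argument is complete.
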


\begin{lemma}
If $\MA$ is an abelian $n$-truncated DG-category and $\MA'$ is quasi-equivalent to $\MA$, $\MA'$ is also an abelian $n$-truncated DG categories.
\end{lemma}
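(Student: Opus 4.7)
The plan is to show that each defining property of ``abelian $n$-truncated DG-category'' is expressible either in terms of (a) the cohomology of the hom-complexes of $\MA$ or (b) the homotopy category $H^0(\MA)$, and then to observe that both kinds of data are preserved under a quasi-equivalence $F\colon \MA\to\MA'$. By definition, such an $F$ induces quasi-isomorphisms $\MA(X,Y)\to \MA'(FX,FY)$ for all $X,Y\in\MA$ and an equivalence of ordinary categories $H^0(F)\colon H^0(\MA)\to H^0(\MA')$ (fully faithful by the quasi-isomorphism condition, essentially surjective by hypothesis).

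From this, $n$-truncatedness (a vanishing condition on the cohomology of the hom-complexes) transfers immediately to $\MA'$, and additivity of $\MA'$ amounts to additivity of $H^0(\MA')$, which is inherited from $H^0(\MA)$ through the equivalence.

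Next I would verify that $\MA'$ admits homotopy kernels (homotopy cokernels being dual). Given a closed degree-$0$ morphism $f\colon X\to Y$ in $\MA'$, essential surjectivity lets me pick $X_0, Y_0\in\MA$ with fixed isomorphisms $FX_0\simeq X$ and $FY_0\simeq Y$ in $H^0(\MA')$, and the bijection $H^0\MA(X_0,Y_0)\cong H^0\MA'(FX_0,FY_0)$ lifts $f$ to a closed degree-$0$ morphism $f_0\colon X_0\to Y_0$ in $\MA$ whose image $Ff_0$ agrees with $f$ up to the chosen isomorphisms. I then take a homotopy kernel $(K_0, k_0, h_{f_0})$ of $f_0$ in $\MA$ and propose its $F$-image as a homotopy kernel of $f$ in $\MA'$. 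To check this, I use the hom-complex characterization from Proposition~\ref{prop: 2-kernel}: its conditions (i) and (ii) are stated purely in terms of closed morphisms, degree-$(-1)$ morphisms and their differentials in $\MA'(W,-)$, and since $F$ restricts to a quasi-isomorphism $\MA(W_0,K_0)\to\MA'(FW_0,FK_0)$ for each test object $W_0\in\MA$, essential surjectivity reduces testing against arbitrary $W\in\MA'$ to testing against objects of the form $FW_0$. The analogous higher-degree conditions (for $n>2$) fit the same template.

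Finally, the exactness axioms — that every $n$-monomorphism is a homotopy kernel of its homotopy cokernel and dually — are again phrased via universal properties of this same kind, and so transfer by the same translation. The only step that requires real care is the lifting in the third paragraph: one must thread the chosen isomorphisms $FX_0\simeq X$, $FY_0\simeq Y$ coherently through both the morphism $f$ and the homotopy kernel diagram, so that the transported universal property is really a universal property of $f$ itself and not of some quasi-isomorphic replacement. This is bookkeeping — essentially the observation that the homotopy kernel is well-defined up to quasi-isomorphism in the source — rather than a substantive obstacle.
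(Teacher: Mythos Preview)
The paper provides no proof of this lemma; it simply groups it with the preceding lemma under the sentence ``The following lemmas are easily verified.'' Your proposal is therefore not really a comparison but a genuine attempt to supply the omitted argument, and the overall strategy---reducing every axiom to statements about the cohomology of hom-complexes and about $H^0(\MA)$, then transporting via the quasi-equivalence---is the natural one and is essentially correct.

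Two small points deserve mention. First, you invoke Proposition~\ref{prop: 2-kernel} for the hom-complex characterization of homotopy kernels, but that proposition is stated and proved only in the strict $2$-truncated setting (it belongs to the comparison with relatively exact $2$-categories, and its proof explicitly uses that degree~$-2$ morphisms vanish). For general $n$ you should instead appeal directly to the universal property in Proposition~\ref{prop: univ of kernels} together with Proposition~\ref{prop: torikae}; these are already phrased purely in terms of closed morphisms, fillers, and differentials in the hom-complexes, so your transport argument applies without modification. Second, ``quasi-equivalent'' in general means connected by a zig-zag of quasi-equivalences rather than by a single DG-functor; your argument as written assumes a functor $F\colon\MA\to\MA'$ pointing in one direction, so you should note that it suffices to treat one leg at a time and that the argument is symmetric in source and target (essential surjectivity and the hom quasi-isomorphisms let you lift data in either direction). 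Neither point is a substantive obstacle---both are exactly the sort of routine checks the paper is gesturing at with ``easily verified.''
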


Thus, we may assume that an abelian $n$-truncated DG-categories always admits pre-envelopes and pre-covers. 

\begin{lemma}\label{lem: torikae pb}
Let $\MA$ be a DG-category and  $X$ be the diagram below:
$$
\begin{tikzcd}
& X_1\ar[rd,"x_{12}"] & \\
X_0 \ar[ru,"x_{01}"]\ar[rr,"x_{02}"' name=x]& & X_2
\ar[from={1-2},to={x},Rightarrow, shorten >=1mm, shorten <=1mm,"h"]
\end{tikzcd}
$$
where $x_{01},x_{02},x_{12}$ are closed morphisms of degree $0$ and $h$ is a morphism of degree $-1$ and assume that there is $\Cone (\id_{X_0})$. Then $X$ is isomorphic to the strictly commutative diagram below in $\rep(\mathrm{Sq},\MA)$:
$$
\begin{tikzcd}
& \Cone(\id_{X_0})\oplus X_1\ar[rd,""] & \\
X_0 \ar[ru,"{\begin{bmatrix} \id_{X_0}\\ 0 \\x_{01}\end{bmatrix}}"]\ar[rr,"x_{02}"']& & X_2
\end{tikzcd}
$$
\end{lemma}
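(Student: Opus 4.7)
The plan is to construct an explicit isomorphism in $\rep(\mathrm{Sq},\MA)$ between $X$ and the strictly commutative triangle $X'$ displayed in the statement. First I have to supply the right-hand edge $\Cone(\id_{X_0})\oplus X_1\to X_2$ of $X'$, which is not labelled: by the universal property of the mapping cone, a closed degree-$0$ morphism $z\colon\Cone(\id_{X_0})\to X_2$ is determined by a closed degree-$0$ map $g\colon X_0\to X_2$ together with a degree-$(-1)$ null-homotopy of $g$. Since $g:=x_{02}-x_{12}\circ x_{01}$ is closed with null-homotopy $h$ by hypothesis, this yields a canonical $z$, and I take the right-hand edge of $X'$ to be $[z,\ x_{12}]$. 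A direct computation then confirms the triangle of $X'$ commutes strictly, i.e.\ $[z,\ x_{12}]\circ\bigl[\id_{X_0},\,0,\,x_{01}\bigr]^{T}=z\circ\iota+x_{12}\circ x_{01}=d(h)+x_{12}\circ x_{01}=x_{02}$, where $\iota\colon X_0\to\Cone(\id_{X_0})$ is the canonical cone inclusion.

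To compare $X$ with $X'$ I use the inclusion $j\colon X_1\hookrightarrow\Cone(\id_{X_0})\oplus X_1$ and projection $\pi\colon\Cone(\id_{X_0})\oplus X_1\to X_1$ onto the $X_1$-summand. Define $\alpha\colon X\to X'$ on vertices by $\id_{X_0}$, $j$, $\id_{X_2}$, with the $2$-cell on the edge $X_0\to X_1'$ supplied by the canonical null-homotopy of the inclusion $\iota\colon X_0\to\Cone(\id_{X_0})$ (extended by zero into $X_1$), and the zero $2$-cells on the other two edges. The coherence condition relating these $2$-cells to the triangle $2$-cells $h$ and $0$ reduces, after unwinding, to the defining equation of $z$, so $\alpha$ is well-defined. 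Define $\beta\colon X'\to X$ analogously with $\pi$ in the middle and the matching null-homotopy data.

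Since $\pi\circ j=\id_{X_1}$ strictly, $\beta\circ\alpha$ equals $\id_X$ on the nose once the $2$-cells are chosen consistently. In the other direction, $j\circ\pi-\id_{\Cone(\id_{X_0})\oplus X_1}$ is the idempotent projecting onto the contractible summand $\Cone(\id_{X_0})$, and its primitive under the canonical degree-$(-1)$ contracting endomorphism of $\Cone(\id_{X_0})$ supplies the middle-vertex $2$-cell witnessing $\alpha\circ\beta\cong\id_{X'}$ in $\rep(\mathrm{Sq},\MA)$. The main obstacle is purely bookkeeping: one must verify that the $2$-cells assigned to $\alpha$ and $\beta$, built from the contracting homotopy of the cone and the datum $h$, satisfy the coherence relations defining a morphism in $\rep(\mathrm{Sq},\MA)$; this reduces to matching the sign conventions of $\Cone(\id_{X_0})$ with those in the universal property defining $z$, and the identity $d(h)=x_{02}-x_{12}\circ x_{01}$ is precisely what makes these conventions compatible.
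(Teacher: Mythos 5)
Your construction is correct, and it is (as far as one can tell) exactly the construction the paper is pointing at: the paper offers no argument of its own here, only the citation to \cite[\S~5.25]{chen2023exact}, and what you have written out is the standard mapping-cylinder replacement that that reference performs. The two essential points are both present in your write-up: the right-hand edge $[z,\ x_{12}]$ is manufactured from the pair $(x_{02}-x_{12}\circ x_{01},\ h)$ via the universal property of $\Cone(\id_{X_0})$, which is what makes the new triangle strictly commute; and the comparison maps $j,\pi$ are levelwise homotopy equivalences because $\Cone(\id_{X_0})$ is contractible, which is what makes them invertible in $\rep(\mathrm{Sq},\MA)$. One small remark on the bookkeeping you defer: a morphism in $\rep(\mathrm{Sq},\MA)$ over a triangle carries not just the three edge $2$-cells but also a degree $(-2)$ coherence datum for the prism, so this is a genuine (if routine) obligation; it closes up with the zero datum precisely because $z\circ\sigma=h$ holds on the nose, where $\sigma\colon X_0\to\Cone(\id_{X_0})$ is the contracting homotopy with $d(\sigma)=\iota$ --- which is the same identity you invoke for the edge $2$-cell. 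Also note that once $\alpha$ is exhibited as a morphism in $\rep(\mathrm{Sq},\MA)$ with levelwise components $\id$, $j$, $\id$, it is automatically invertible there, so the explicit construction of $\beta$ and of the homotopy $\alpha\circ\beta\simeq\id$ is more than is strictly required.
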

\begin{proof}
This is stated in \cite[\S ~ 5.25, p.~209]{chen2023exact}.
\end{proof}

\begin{lemma}[{\cite[Lemma 5.39]{chen2023exact}}]
\label{lem: pullback in A}
Let $\MA$ be a non-positive DG-category admitting pre-envelopes and pre-covers and $X$ be the following diagram in $\MA$:
$$
\begin{tikzcd}
X_{00}\ar[r]\ar[d]&X_{01}\ar[d]\\
X_{10}\ar[r]&X_{11}
\end{tikzcd}
$$
which is strictly commutative $\MA$. Then $X$ is homotopy pullback square in $\MA$ if and only if $X$ is pullback square in $N_{\dg}(\MA)$.
\end{lemma}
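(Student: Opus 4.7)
The plan is to match pullback squares in $N_{\dg}(\MA)$ with homotopy pullback squares in $\MA$ by testing both against arbitrary mapping objects and identifying the resulting conditions via the Dold--Kan correspondence.

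First, I will recall that in any quasicategory a square is a pullback iff for every object $C$ the induced square of mapping Kan complexes is a homotopy pullback. Specialising to $N_{\dg}(\MA)$ and using the standard identification of $\mathrm{Map}_{N_{\dg}(\MA)}(C, X)$ with the simplicial abelian group obtained via Dold--Kan from $\tauleq{0}\MA(C, X)$ --- valid because $\MA$ is non-positive, cf.\ \cite[1.3]{lurie2017higheralgebra} --- this translates the pullback condition in $N_{\dg}(\MA)$ into the requirement that, for every $C$, the strictly commutative square of non-positive complexes
\[
\begin{tikzcd}
\MA(C, X_{00})\ar[r]\ar[d] & \MA(C, X_{01})\ar[d]\\
\MA(C, X_{10})\ar[r] & \MA(C, X_{11})
\end{tikzcd}
\]
is a homotopy pullback.

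Next, I will unfold the definition of homotopy pullback in $\MA$ following the pattern of Proposition \ref{prop: 2-kernel} (adapted from kernels to pullback squares): testing against closed morphisms from $C$ of degrees $0$ and $-1$, controlling respectively factorisation and uniqueness, yields precisely the universal property that the above square of mapping complexes is a homotopy pullback. Lemma \ref{lem: torikae pb}, together with the pre-envelope and pre-cover hypothesis, allows homotopy commutative cones into $X_{11}$ to be strictified inside $\MA$, so that this test is equivalent whether performed internally in $\MA$ or at the level of mapping complexes.

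The main obstacle will be reconciling the arbitrarily long chains of homotopies appearing in the higher simplices of $N_{\dg}(\MA)$ with the degree $0$ and $-1$ data that suffices to characterise homotopy pullbacks in the DG-setting. This is resolved by the two points just mentioned: the Dold--Kan correspondence encodes all higher simplicial data in a non-positive complex, and the non-positivity of $\MA$ ensures no extra degrees intervene, while the strictification enabled by pre-envelopes and pre-covers closes any remaining gap between strict and homotopy commutative diagrams. Combining the two complex-level reformulations then yields the stated equivalence.
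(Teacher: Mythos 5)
The paper does not actually prove this lemma: it is imported wholesale as \cite[Lemma 5.39]{chen2023exact}, so there is no internal proof to compare against, and your proposal has to be judged on its own terms. Your overall strategy --- detect pullbacks in $N_{\dg}(\MA)$ by mapping spaces, identify $\mathrm{Map}_{N_{\dg}(\MA)}(C,-)$ with the Dold--Kan image of the mapping complexes, and strictify coherent cones using pre-envelopes via Lemma \ref{lem: torikae pb} --- is the natural one and is very plausibly how the cited reference argues. But as written the proposal has two genuine gaps in its middle step.

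First, the claim that testing against closed morphisms of degrees $0$ and $-1$ ``yields precisely the universal property'' of a homotopy pullback is borrowed from Proposition \ref{prop: 2-kernel}, whose proof depends essentially on $\MA$ being $2$-truncated (it repeatedly kills degree $-2$ morphisms, e.g.\ ``$\overline{r}=0$ since $\overline{r}$ is a morphism of degree $-2$''). The present lemma is stated for an arbitrary non-positive DG-category, where degree $0$ and $-1$ data do not determine a homotopy pullback; you must work with the full complexes, i.e.\ with quasi-isomorphism onto the homotopy fibre product $\MA(C,X_{01})\times^h_{\MA(C,X_{11})}\MA(C,X_{10})$ in all degrees. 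Your closing paragraph asserts this is ``resolved'' by non-positivity, but non-positivity of $\MA$ does not truncate the homotopies; only $n$-truncation does. Second, there is an unaddressed truncation mismatch at the edge: the homotopy fibre product of non-positive complexes has a possibly nonzero $H^1$, namely the cokernel of $H^0(\MA(C,X_{01}))\oplus H^0(\MA(C,X_{10}))\to H^0(\MA(C,X_{11}))$, which the mapping-space condition in $N_{\dg}(\MA)$ does not see (spaces have no $\pi_{-1}$), whereas a homotopy pullback of unbounded complexes does. Whether these two conditions coincide is exactly where the content of the lemma lies, and it depends on which definition of ``homotopy pullback square in $\MA$'' (truncated or untruncated) is in force; you need to pin that definition down and verify the agreement, rather than letting the Dold--Kan correspondence absorb it.
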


\begin{proof}[Proof for Proposition \ref{thm: stef}]
We can assume that $\MA$ admitting pre-envelopes and pre-covers and $Z^0(\MA)$ is additive. By definition of an additive $n$-truncated DG-category, we can easily verify that $\MA$ is additive $n$-truncated DG-category if and only if $N_{\dg}\MA$ is additive $(n,1)$-category. Thus, we show equivalenceness of (ii) in Definition \ref{def: abel def} and (ii) in Definition \ref{def: ste}.

Take an $n$-monomorphism $f\colon X\to Y$ and right exact sequence:
\[
\begin{tikzcd}
X \ar[r, "f", swap] \ar[rr, "0", bend left=45, "{}" name=zero]
& |[alias=Y]| Y \ar[r, "g", swap] & Z
\ar[Rightarrow, from=Y, to=zero, "h", shorten >=1mm]
\end{tikzcd}
\]
By Lemma \ref{lem: torikae pb}, we have a strictly commutative square $X'$ which is isomorphic to $X$ in $\rep (\mathrm{Sq},\MA)$:
$$
X:=
\begin{tikzcd}
X\ar[r]\ar[d]&Y\ar[d]\ar[ld,Rightarrow,shorten <=5, shorten >=5,"h"]\\
0\ar[r]&Z
\end{tikzcd}
,\quad
X':=
\begin{tikzcd}
X'\ar[r]\ar[d]&Y'\ar[d]\\
I\ar[r]&Z'
\end{tikzcd}
$$
Since these are isomorphic, we can conclude that
\begin{itemize}
    \item $f$ is isomorphic to $f'$ in $\mathrm{Arr}~ H^0(\MA)$, 
    \item $I$ is contractible in $\MA$,
    \item $X'$ is homotopy cocartesian square in $\MA$.
\end{itemize}
Therefore  $f'$ is $(n-2)$-truncated and $X'$ is a pushout square in $N_{\dg}(\MA)$ and thus, $X$ is a pullback square in $N_{\dg}(\MA)$. This implies $X'$ is homotopy cartesian in $\MA$. 

Conversely, we assume that (ii) in Definition \ref{def: ste} and take a $n$-monomorphism $f\colon X\to Y$ and diagram $X\in\mathrm{Fun}(\Delta^1\times \Delta^1,N_{\dg}(\MA))$ which is pushout. This is identified with the following:
$$
\begin{tikzcd}
X\ar[r,"f"]\ar[d]\ar[rd] & Y\ar[d]\ar[ld,Rightarrow,shorten >= 20]\\
I\ar[r]\ar[ru,Rightarrow,shorten >= 20] & Z
\end{tikzcd}
$$
Then we can replace this diagram with a strictly commutative diagram $X'$ which is isomorphic to $X$ in $\mathrm{Fun}(\Delta^1\times \Delta^1,N_{\dg}(\MA))$ by using Lemma \ref{lem: torikae pb}. By Lemma \ref{lem: pullback in A} and assumption, $X'$ is a pullback square in $N_{\dg}(\MA)$ and it implies $X$ is also a pullback square.
\end{proof}

\appendix

\section{Extriangulated categories}\label{app: A}

Extriangulated categories, introduced by \cite{nakaoka2019extriangulated}, provide a simultaneous generalization of exact categories and triangulated categories. In this section, we introduce the definition and discuss some basic properties.

Throughout this section, let $\MC$ be an additive $k$-linear category.

\begin{definition}
Let $\MC$ be an additive category, and let $A, C \in \MC$. Consider two sequences of morphisms in $\MC$, $A \to B \to C$ and $A \to B' \to C$. If there exists an isomorphism $B \xrightarrow{\cong} B'$ such that the following diagram commutes, then we say these two sequences are isomorphic and write $(A \to B \to C) \sim (A \to B' \to C)$:
\[
\begin{tikzcd}[row sep=0.3cm]
 & B \ar[rd] \ar[dd, "\cong"] & \\
A \ar[ru] \ar[rd] & & C \\
 & B' \ar[ru] & 
\end{tikzcd}
\]
The set of equivalence classes of such sequences is denoted by $\mathrm{Seq}(C, A)$. The image of $A \to B \to C$ in $\mathrm{Seq}(C, A)$ is written as $[A \to B \to C]$.

For $[A \to B \to C], [A' \to B' \to C'] \in \mathrm{Seq}(C, A)$, we define:
\[
[A \to B \to C] \oplus [A' \to B' \to C'] := [A \oplus A' \to B \oplus B' \to C \oplus C'].
\]
\end{definition}

\begin{definition}\cite{nakaoka2019extriangulated}
Let $\MC$ be an additive category, and let $\mathbb{E} \colon \MC^\op \times \MC \to \Mod k$ be a bilinear-functor. If a family $\mathfrak{s} = \{\mathfrak{s}_{C, A} \colon \mathrm{Seq}(C, A) \to \mathbb{E}(C, A)\}_{C, A \in \MC}$ satisfies the following conditions, it is called an \it{additive realization} of $\mathbb{E}$:
\begin{itemize}
    \item[(i)] For $\delta_0 \in \mathbb{E}(C_0, A_0)$, $\delta_1 \in \mathbb{E}(C_1, A_1)$, and a pair of morphisms $(a \colon A_0 \to A_1, c \colon C_0 \to C_1)$, suppose $\mathfrak{s}(\delta_i) = [A_i \to B_i \to C_i]$ for $i = 0, 1$. Then, there exists a morphism $b \colon B_0 \to B_1$ such that the following diagram commutes:
\[
\begin{tikzcd}
A_0 \ar[r] \ar[d, "a"] & B_0 \ar[r] \ar[d, "b", dashed] & C_0 \ar[d, "c"] \\
A_1 \ar[r] & B_1 \ar[r] & C_1
\end{tikzcd}
\]

\item[(ii)] $\mathfrak{s}(0_{C, A}) = [A \xrightarrow{[1, 0]^t} A \oplus C \xrightarrow{[0, 1]} C]$ holds.

\item[(iii)] For any $\delta_i \in \mathbb{E}(C_i, A_i)$ ($i = 0, 1$) and $\delta_0 \oplus \delta_1 \in \mathbb{E}(C_0 \oplus C_1, A_0 \oplus A_1)$, the following holds:
\[
\mathfrak{s}(\delta_0 \oplus \delta_1) = \mathfrak{s}(\delta_0) \oplus \mathfrak{s}(\delta_1),
\]
where $\delta_0 \oplus \delta_1 \in \mathbb{E}(C_0 \oplus C_1, A_0 \oplus A_1)$ corresponds to $(\delta_0, 0, 0, \delta_1)$ under the natural isomorphism:
\[
\mathbb{E}(C_0 \oplus C_1, A_0 \oplus A_1) \cong \mathbb{E}(C_0, A_0) \oplus \mathbb{E}(C_0, A_1) \oplus \mathbb{E}(C_1, A_0) \oplus \mathbb{E}(C_1, A_1).
\]
\end{itemize}
\end{definition}

\begin{remark} 
Note the following conventions regarding notation:
\begin{itemize}
    \item[(i)] When $\mathfrak{s}(\delta) = [A \to B \to C]$, we write:
    \[
    \begin{tikzcd}
    A \ar[r] & B \ar[r] & C \ar[r, "\delta", dashed] & {}
    \end{tikzcd}
    \]
    The above diagram is called a conflation, where $A \to B$ is called an inflation, and $B \to C$ is called a deflation.

    \item[(ii)] A triple $(a, b, c)$ of morphisms is called a morphism between conflations if $\mathbb{E}(C_0, a)(\delta_0) = \mathbb{E}(c, A_1)(\delta_1)$ and the following diagram commutes:
    \[
    \begin{tikzcd}
    A_0 \ar[r] \ar[d, "a"] & B_0 \ar[r] \ar[d, "b"] & C_0 \ar[r, dashed, "\delta_0"] & {} \\
    A_1 \ar[r] & B_1 \ar[r] & C_1 \ar[r, dashed, "\delta_1"] & {}
    \end{tikzcd}
    \]

    \item[(iii)] If $\mathbb{E}(C_0, a)(\delta_0) = \mathbb{E}(c, A_1)(\delta_1)$ holds, we write $a \cdot \delta_0 = \delta_1 \cdot c$.
\end{itemize}
\end{remark}

\begin{definition}\cite[Definition 2.12]{nakaoka2019extriangulated}
Let $\MC$ be an additive category, $\mathbb{E} \colon \MC^\op \times \MC \to \Mod k$ a bilinear-functor, and $\mathfrak{s}$ an additive realization of $\mathbb{E}$. The triple $(\MC, \mathbb{E}, \mathfrak{s})$ is called an extriangulated category if the following conditions are satisfied:
\begin{itemize}
    \item[(ET3)] For any conflations 
    \begin{tikzcd}[column sep = 15]
    {A}\ar[r] & {B}\ar[r] & {C}\ar[r,dashed,"\delta"] & {}
    \end{tikzcd} and 
\begin{tikzcd}[column sep = 15]
{A'}\ar[r] & {B'}\ar[r,"g"] & {C'}\ar[r,dashed,"\delta"] & {}
\end{tikzcd}
, and for any commutative diagram of the form:
    \[
    \begin{tikzcd}
    A \ar[r] \ar[d, "a"] & B \ar[r] \ar[d, "b"] & C \ar[r, dashed, "\delta'"] & {} \\
    A' \ar[r] & B' \ar[r] & C' \ar[r, dashed, "\delta'"] & {}
    \end{tikzcd}
    \]
    there exists a morphism $c \colon C \to C'$ such that $(a, b, c)$ is a morphism between conflations.

    \item[(ET3)'] The dual of (ET3).

    \item[(ET4)] For conflations $A \xrightarrow{f} B \xrightarrow{f'} D \dotarrow{\delta}$ and $B \xrightarrow{g} C \xrightarrow{g'} F \dotarrow{\delta'}$, there exists a following commutative diagram:
    \[
    \begin{tikzcd}
    A \ar[r, "f"] \ar[d, equal] & B \ar[r, "f'"] \ar[d, "g"] & D \ar[d, "d"] \ar[r, dashed, "\delta"] & {} \\
    A \ar[r, "g \circ f"] & C \ar[r] \ar[d, "g'"] & E \ar[d, "e"] \ar[r, dashed, "\delta''"] & {} \\
    & F \ar[r, equal] \ar[d, dashed, "\delta'"] & F \ar[d, dashed, "f' \cdot \delta'"] & {} \\
    & {} & {} & {}
    \end{tikzcd}
    \]
    where 
    \begin{itemize}
        \item [(i)] \begin{tikzcd}[column sep = 15]
    {D}\ar[r,"d"] & {E}\ar[r,"e"] & {F}\ar[r,dashed,"f'\cdot\delta'"] & {}
    \end{tikzcd} and
    \begin{tikzcd}[column sep = 15]
    {A}\ar[r,"g\circ f"] & {C}\ar[r,""] & {F}\ar[r,dashed,"\delta''"] & {}
    \end{tikzcd} are conflations,
        \item [(ii)] $\delta = \delta'' \cdot d$ and $f \cdot \delta'' = \delta' \cdot e$ hold.
    \end{itemize}
    \item[(ET4)'] The dual of (ET4).
\end{itemize}
\end{definition}

\begin{example}
We introduce typical examples (cf. \cite{nakaoka2019extriangulated}):
\begin{itemize}
    \item[(i)] Let $(\MC, \MS)$ be an exact category. In this case, an extriangulated category can be constructed with the bifunctor $\mathrm{Ext}^1_{\MC} \colon \MC^\op \times \MC \to \mathrm{Ab}$. Conversely, any extriangulated category in which inflations are monomorphisms and deflations are epimorphisms arises in this way (\cite[Corollary 3.18]{nakaoka2019extriangulated}).

    \item[(ii)] Let $\MT$ be a triangulated category. In this case, an extriangulated category can be constructed with the bifunctor $\MT(-, -[1]) \colon \MT^\op \times \MT \to \mathrm{Ab}$. Conversely, any extriangulated category in which every morphism is both an inflation and a deflation arises in this way (\cite[Proposition 3.22]{nakaoka2019extriangulated}).
\end{itemize}
\end{example}

\begin{definition}
Let $(\MC, \mathbb{E}, \mathfrak{s})$ be an extriangulated category, and let $\MC' \subset \MC$ be a subcategory closed under direct sums and isomorphisms. If every conflation 
\begin{tikzcd}[column sep = 15]
{A}\ar[r] & {B}\ar[r] & {C}\ar[r,dashed] & {}
\end{tikzcd}
satisfies $A, C \in \MC'$ implies $B \in \MC'$, then $\MC'$ is called an extension-closed subcategory.
\end{definition}

\begin{proposition}\cite[Remark 2.18]{nakaoka2019extriangulated}
Let $(\MC, \mathbb{E}, \mathfrak{s})$ be an extriangulated category. If $\MC' \subset \MC$ is extension-closed, then $(\MC', \mathbb{E}\mid_{\MC'^\op \times \MC'}, \mathfrak{s})$ is also an extriangulated category.
\end{proposition}

\begin{proposition}\cite[Theorem 3.5]{gorsky2021positive}\label{prop: long exact induced by conflation}
Let $(\MC, \mathbb{E}, \mathfrak{s})$ be an extriangulated category, and let 
\begin{tikzcd}[column sep = 15]
{A}\ar[r,"f"] & {B}\ar[r,"g"] & {C}\ar[r,dashed,"\delta"] & {}
\end{tikzcd}
be a conflation. Then, the following long exact sequence in $\Mod \MC$ is obtained:
\[
\begin{tikzcd}[row sep=0.3cm]
 & \MC(-, A) \ar[r, "f \circ -"] & \MC(-, B) \ar[r, "g \circ -"] & \MC(-, C)
\ar[r, "\delta \cdot -"] & {} \\
{}\ar[r] & \mathbb{E}(-, A) \ar[r] & \mathbb{E}(-, B) \ar[r] & \mathbb{E}(-, C) \ar[r] & \cdots \\
\cdots \ar[r] & \mathbb{E}^n(-, A) \ar[r] & \mathbb{E}^n(-, B) \ar[r] & \mathbb{E}^n(-, C) \ar[r] & \cdots \\
\end{tikzcd}
\]
Here, $\mathbb{E}^n \colon \MC^\op \times \MC \to \Mod k$ is a bifunctor called the positive extension (cf. \cite[Section 3]{gorsky2021positive}).
\end{proposition}

\section{Pretriangulated categories}\label{app: B}

In this section, we review the definition and basic properties of pretriangulated categories as introduced in \cite{beligiannis2007homological}. Note that the pretriangulated categories defined in \cite{beligiannis2007homological} are not equivalent to pretriangulated DG categories or triangulated categories without the condition (TR4).

\begin{definition}
For an additive category $\MC$ and an additive functor $\Omega \colon \MC \to \MC$, the category of triangles associated with $\Omega$, denoted $\mathrm{Tri}_\Omega$, is defined as follows:
\begin{itemize}
    \item Objects are sequences of morphisms in $\MC$ of the form $\Omega A \to C \to B \to A$.
    \item For two objects $\Omega A \to C \to B \to A$ and $\Omega A' \to C' \to B' \to A'$, a triple of morphisms $(\gamma, \beta, \alpha)$ is called a morphism between the triangles if the following diagram commutes:
    \[
    \begin{tikzcd}
    \Omega A \ar[r, "f"] \ar[d, "\Omega \alpha"] & C \ar[r, "g"] \ar[d, "\gamma"] & B \ar[r, "h"] \ar[d, "\beta"] & A \ar[d, "\alpha"] \\
    \Omega A' \ar[r, "f'"] & C' \ar[r, "g'"] & B' \ar[r, "h'"] & A'
    \end{tikzcd}
    \]
\end{itemize}
\end{definition}

\begin{definition}\cite[Definition 2.2]{beligiannis1994left}\label{def: left triangulated cat}
Let $\MC$ be an additive category, $\Omega \colon \MC \to \MC$ an additive functor, and $\Delta \subset \mathrm{Tri}_\Omega$ a full subcategory closed under isomorphisms. If the following conditions are satisfied, a triplet $(\mathcal{C}, \Omega, \Delta)$ is referred to as a \textit{left triangulated category}:
\begin{itemize}
    \item[(LT1)] 
    \begin{itemize}
        \item[(i)] $[0 \to A = A \to 0] \in \Delta$.
        \item[(ii)] For any morphism $f \in \MC(B, A)$, there exists $[\Omega A \to C \to B \xrightarrow{f} A] \in \Delta$.
    \end{itemize}
    \item[(LT2)] For $[\Omega A \xrightarrow{h} C \xrightarrow{g} B \xrightarrow{f} A] \in \Delta$, we have $[\Omega B \xrightarrow{-\Omega f} \Omega A \xrightarrow{h} C \xrightarrow{g} B] \in \Delta$.
    \item[(LT3)] For the following diagram, where the right-hand square commutes and each row belongs to $\Delta$:
    \[
    \begin{tikzcd}
    \Omega A \ar[r, "f"] \ar[d, "\Omega \alpha"] & C \ar[r, "g"] \ar[d, "\gamma", dashed] & B \ar[r, "h"] \ar[d, "\beta"] & A \ar[d, "\alpha"] \\
    \Omega A' \ar[r, "f'"] & C' \ar[r, "g'"] & B' \ar[r, "h'"] & A',
    \end{tikzcd}
    \]
    there exists a morphism $\gamma \colon C \to C'$ such that the diagram commutes.
    \item[(LT4)] For two triangles in $\Delta$, $\Omega A \to F \to B \xrightarrow{f} A$ and $\Omega B \to E \to C \xrightarrow{g} B$, there exists the following commutative diagram with $\Omega A \to D \to C \xrightarrow{f \circ g} A$ in $\Delta$:
    \[
    \begin{tikzcd}
    & \Omega B \ar[r, "\Omega f"] \ar[d] & \Omega A \ar[r, equal] \ar[d] & \Omega A \ar[d] \\
    \Omega F \ar[r] \ar[d] & E \ar[r] \ar[d, equal] & D \ar[r] \ar[d] & F \ar[d] \\
    \Omega B \ar[r] & E \ar[r] & C \ar[r, "g"] \ar[d, "f \circ g"] & B \ar[d, "f"] \\
    & & A \ar[r, equal] & A
    \end{tikzcd}
    \]
\end{itemize}
\end{definition}

The concept of a right triangulated category is defined dually.

\begin{definition}\cite[Chapter 2, Definition 1.1]{beligiannis2007homological}\label{def: pretr}
A tuple $(\MC, \Sigma, \Omega, \Delta, \nabla)$ is called a \it{pretriangulated category} if it satisfies the following conditions:
\begin{itemize}
    \item[(i)] $\Sigma \dashv \Omega$.
    \item[(ii)] $(\MC, \Omega, \Delta)$ is a left triangulated category, and $(\MC, \Sigma, \nabla)$ is a right triangulated category.
    \item[(iii)] For the following diagram in $\MC$, where the left-hand square commutes, the top row belongs to $\nabla$, and the bottom row belongs to $\Delta$:
    \[
    \begin{tikzcd}
    A \ar[r] \ar[d, "a"] & B \ar[r] \ar[d] & C \ar[d, dashed] \ar[r] & \Sigma A \ar[d, "\overline{a}"] \\
    \Omega C' \ar[r] & A' \ar[r] & B' \ar[r] & C'
    \end{tikzcd}
    \]
    there exists a morphism $C \to B'$ such that the diagram commutes. Here, $\overline{a} \colon \Sigma A \to C'$ is the morphism corresponding to $a \colon A \to \Omega C'$ under the adjunction.
    \item[(iv)] The dual of (iii).
\end{itemize}
\end{definition}

\begin{example} 
We present some typical examples:
\begin{itemize}
    \item[(i)] A triangulated category can always be regarded as a pretriangulated category with $\Delta = \nabla$ and $\Sigma \dashv \Omega$ as an equivalence adjoint. Conversely, a pretriangulated category satisfying these conditions is a triangulated category.
    \item[(ii)] An additive category $\MC$ with kernels and cokernels forms a pretriangulated category with $\Sigma = \Omega = 0$, where $\Delta$ consists of all left exact sequences and $\nabla$ consists of all right exact sequences.
\end{itemize}
\end{example}

\begin{remark}
Let $(\MC, \Omega, \Delta)$ be a left triangulated category, and let $\Omega C \to A \xrightarrow{f} B \xrightarrow{g} C$ be a triangle in $\Delta$. Then, the following sequence is exact:
\[
\begin{tikzcd}
\MC(-, A) \ar[r, "f \circ -"] & \MC(-, B) \ar[r, "g \circ -"] & \MC(-, C)
\end{tikzcd}
\]
\end{remark}

From the above remark and condition (LT2), we obtain the following long exact sequence:

\begin{proposition}\label{prop: long exact induced by left triangle}
Let $(\MC, \Omega, \Delta)$ be a left triangulated category, and let $\Omega C \to A \xrightarrow{f} B \xrightarrow{g} C$ be a triangle in $\Delta$. Then, the following long exact sequence is obtained:
\[
\begin{tikzcd}[row sep=0.3cm]
\cdots \ar[r] & \MC(-, \Omega^{n-1} A) \ar[r, "\Omega^{n-1} f \circ -"] & \MC(-, \Omega^{n-1} B) \ar[r, "\Omega^{n-1} g \circ -"] & \MC(-, \Omega^{n-1} C) \ar[r] & \cdots \\
{} \ar[r] & \MC(-, \Omega A) \ar[r, "\Omega f \circ -"] & \MC(-, \Omega B) \ar[r, "\Omega g \circ -"] & \MC(-, \Omega C) \ar[r] & {} \\
{} \ar[r] & \MC(-, A) \ar[r, "f \circ -"] & \MC(-, B) \ar[r, "g \circ -"] & \MC(-, C) & {}
\end{tikzcd}
\]
\end{proposition}

The corresponding statement for right triangulated categories is as follows:

\begin{proposition}\label{prop: long exact induced by right triangle}
Let $(\MC, \Sigma, \nabla)$ be a right triangulated category, and let $A \xrightarrow{f} B \xrightarrow{g} C \to \Sigma A$ be a triangle in $\nabla$. Then, the following long exact sequence is obtained:
\[
\begin{tikzcd}[row sep=0.3cm]
0 \ar[r] & \MC(\Sigma^{n-1} C, -) \ar[r, "- \circ \Sigma^{n-1} g"] & \MC(\Sigma^{n-1} B, -) \ar[r, "- \circ \Sigma^{n-1} f"] & \MC(\Sigma^{n-1} A, -) \ar[r] & \cdots \\
\cdots \ar[r] & \MC(\Sigma C, -) \ar[r, "- \circ \Sigma g"] & \MC(\Sigma B, -) \ar[r, "- \circ \Sigma f"] & \MC(\Sigma A, -) \ar[r] & {} \\
{} \ar[r] & \MC(C, -) \ar[r, "- \circ g"] & \MC(B, -) \ar[r, "- \circ f"] & \MC(A, -)
\end{tikzcd}
\]
\end{proposition}

\section*{Acknowledgements} 
The author gratefully acknowledges the invaluable guidance and insightful comments of his supervisor, Prof. Hiroyuki Nakaoka, which have greatly contributed to the improvement of this paper.

\bibliographystyle{mybstwithlabels}
\bibliography{myrefs}

\end{document}